\theoremstyle{definition}
\newtheorem{fact}{Fait}
\theoremstyle{plain}
\newtheorem{thm}{Théorème}
\newtheorem{lem}{Lemme}
\newtheorem{prop}{Proposition}
\def\Legendre(#1,#2){%
\begin{pmatrix}
#1\cr 
\hdashline[1pt/1pt]
#2\cr
\end{pmatrix}}
\title{Sur le $p$-rang du groupe des classes de $\mathbf{Q}(N^{\frac{1}{p}})$}
\author{Emmanuel Lecouturier}
\date {08/02/16}
\begin{document}
\maketitle

\selectlanguage{francais}
\begin{abstract} 
Soient $N$ et $p$ deux nombres premiers $\geq 5$ tels que $p$ divise $N-1$. Nous estimons le $p$-rang du groupe des classes de $\mathbf{Q}(N^{\frac{1}{p}})$, en terme du comportement de certaines unités, dont nous considérons des logarithmes à valeur dans $\mathbf{F}_p$. Nous réinterprétons ces logarithmes grâce à la formule de Gross--Koblitz et à des identités sur la fonction Gamma $N$-adique. Un cas particulier (dont nous n'avons pas trouvé de preuve élémentaire) de nos formules s'énonce ainsi : supposons qu'il existe $a$, $b$ $\in \mathbf{Z}$ tels que $N = \frac{a^p+b^p}{a+b}$. Alors $(a+b) \cdot \left( \prod_{k=1}^{\frac{N-1}{2}} k^{8k} \right)$ est une puissance $p$-ième modulo $N$. De plus nous donnons une nouvelle démonstration sans utiliser le formes modulaires d'un résultat de Calegari et Emerton.
\end{abstract}

\selectlanguage{english}
\begin{abstract} 
Let $N$ and $p$ be two prime numbers $\geq 5$ such that $p$ divides $N-1$. We estimate the $p$-rank of the class group of $\mathbf{Q}(N^{\frac{1}{p}})$ in terms of the discrete logarithm, with values un $\mathbf{F}_p$, of certain units. Using the Gross--Koblitz formula and identities on the $N$-adic Gamma function, we explicitly compute these logarithms. A special case (for which we don't have an elementary proof) of our formula is the following: assume there are some integers $a$, $b$ such that $N = \frac{a^p+b^p}{a+b}$. Then $(a+b) \cdot \left( \prod_{k=1}^{\frac{N-1}{2}} k^{8k} \right)$ is a $p$-th power modulo $N$. Furthermore we give a new proof which doesn't use modular forms of a result of Calegari and Emerton.
\end{abstract}

\selectlanguage{francais}
\tableofcontents
 \
\section{Introduction}

Soient $N$ et $p$ deux nombres premiers $\geq 5$. Supposons que $N \equiv 1  \text{ (mod } p\text{)}$. Soit $\nu$ la valuation $p$-adique de $N-1$. Fixons un morphisme surjectif de groupes $ \log : (\mathbf{Z}/N\mathbf{Z})^{\times} \rightarrow \mathbf{Z}/p^{\nu}\mathbf{Z}$. Il nous arrivera de considérer la réduction modulo $p$ de $ \log$, c'est pourquoi nous préciserons toujours dans nos formules si nous travaillons seulement modulo $p$ ou bien modulo $p^{\nu}$. Fixons également une clôture algébrique $\overline{\mathbf{Q}}$ de $\mathbf{Q}$. 

Si $L$ est un corps de nombres, soit $\mathcal{C}_L$ le $p$-sous-groupe de Sylow du groupe des classes de $L$. Soit $r_L$ le $p$-rang de $\mathcal{C}_L$. 
Soit $\alpha \in \overline{\mathbf{Q}}$ tel que $\alpha^p=N$ et posons $K = \mathbf{Q}(\alpha)$. Le but de cet article est d'étudier l'entier $r_K$. Cet entier été étudié, notamment par Iimura (\cite{Iimura}) et plus récemment par Calegari et Emerton (\cite{CE}).
On peut montrer, via la théorie du genre, que $r_K \geq 1$ (\textit{i.e.} $\mathcal{C}_K \neq 0$).
En utilisant la formule des genres de Jaulent, Iimura parvient à estimer précisément $r_K$ en fonction de symboles de Hilbert, qui ont le désavantage de ne pas être très explicites. D'un autre côté, Calegari et Emerton donnent le critère explicite suivant :

\begin{thm}\label{CE}\cite[Theorem $1.3 \text{ }(ii)$]{CE}
Supposons que $\sum_{k=1}^{\frac{N-1}{2}} k\cdot  \log (k)  \equiv 0  \text{ (mod } p\text{)}$. Alors on a $r_K\geq 2$.
\end{thm}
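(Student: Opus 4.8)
The plan is to trade the non-Galois field $K$ for its Galois closure $\tilde{K} = \mathbf{Q}(\zeta_p,\alpha) = F(\alpha)$, where $F = \mathbf{Q}(\zeta_p)$, and to read $r_K$ off from an eigenspace of a Kummer group. Write $\Delta = \mathrm{Gal}(\tilde{K}/K) \cong \mathrm{Gal}(F/\mathbf{Q}) \cong (\mathbf{Z}/p)^{\times}$ and let $\omega \colon \Delta \to \mathbf{F}_p^{\times}$ be the mod $p$ cyclotomic character, so that $\delta(\zeta_p) = \zeta_p^{\omega(\delta)}$. Since $[\tilde{K}:K] = p-1$ is prime to $p$, extension and norm of ideals identify $\mathcal{C}_K$ with the $\Delta$-invariants $(\mathcal{C}_{\tilde{K}})^{\Delta} = e_0\,\mathcal{C}_{\tilde{K}}$, the trivial eigenspace. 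First I would record this isomorphism and its compatibility with reduction mod $p$, so that $r_K = \dim_{\mathbf{F}_p} e_0(\mathcal{C}_{\tilde{K}}/p)$.

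Next I would apply Kummer duality over $\tilde{K} \supseteq \mu_p$. The maximal unramified abelian extension of $\tilde{K}$ of exponent $p$ is $\tilde{K}(\sqrt[p]{V})$, where $V$ is the group of classes $x \in \tilde{K}^{\times}/(\tilde{K}^{\times})^p$ whose divisor is the $p$-th power of an ideal and which are $p$-th powers in $\tilde{K}_w$ for every $w \mid p$. The $\Delta$-equivariant Artin--Kummer pairing $(\mathcal{C}_{\tilde{K}}/p) \times V \to \mu_p$, together with the fact that $\Delta$ acts on $\mu_p$ through $\omega$, shows that the trivial eigenspace of $\mathcal{C}_{\tilde{K}}/p$ is dual to the $\omega$-eigenspace $V^{(\omega)}$, whence $r_K = \dim_{\mathbf{F}_p} V^{(\omega)}$. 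It therefore suffices to produce two $\mathbf{F}_p$-independent elements of $V^{(\omega)}$; equivalently, granting the genus-theoretic class that already gives $r_K \geq 1$, to produce one further element $x_1 \in V^{(\omega)}$ not proportional to it.

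The candidate for $x_1$ should be the $\omega$-component of an explicit cyclotomic or Gauss-sum unit of $\tilde{K}$. Membership $x_1 \in V^{(\omega)}$ splits into three conditions: the eigenvector property $\delta(x_1) \equiv x_1^{\omega(\delta)}$, which I would arrange to hold by construction; the divisor of $x_1$ being a $p$-th power, which I expect to reduce to the requirement that a specific cyclotomic unit $u$ be a $p$-th power modulo the primes of $\tilde{K}$ above $N$, i.e. $\log$ of its reduction $\equiv 0 \pmod{p}$; and the local condition at $p$. I would thus (i) compute the divisor of the Gauss-sum element and isolate the single congruence $\log(\overline{u}) \equiv 0 \pmod{p}$ that makes it a $p$-th power, and (ii) check independence from the genus class. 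The last and, I expect, hardest step is to evaluate this logarithm: reducing $u$ modulo a prime above $N$ turns it into a Gauss sum over $\mathbf{F}_N$, whose logarithm the Gross--Koblitz formula expresses through values of the $N$-adic Gamma function, and the multiplication and reflection identities for $\Gamma$ should collapse the result to $\sum_{k=1}^{(N-1)/2} k \cdot \log(k) \pmod{p}$.

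The main obstacle is precisely this explicit identification --- matching the Gross--Koblitz and Gamma-function computation exactly onto Merel's sum --- together with the verification of the local condition at $p$, where the $p$-adic valuation $\nu$ of $N-1$ enters and where the construction must be arranged with care. Once these are in place, the hypothesis $\sum_{k=1}^{(N-1)/2} k \log(k) \equiv 0 \pmod{p}$ is exactly the statement $x_1 \in V^{(\omega)}$; combined with its independence from the genus class this gives $\dim_{\mathbf{F}_p} V^{(\omega)} \geq 2$, hence $r_K \geq 2$. This route uses only class field theory, Kummer theory and Gauss sums, and in particular \emph{avoids modular forms}, as promised.
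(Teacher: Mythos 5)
Your global setup is sound and genuinely different from the paper's: identifying $\mathcal{C}_K$ with $e_{\chi_0}\mathcal{C}_{\tilde K}$ and then passing by Kummer duality to the $\omega$-eigenspace $V^{(\omega)}$ of the unramified Kummer radical is a correct reflection argument, and $r_K=\dim_{\mathbf{F}_p}V^{(\omega)}$ does hold. The gap is in the step where you claim that the hypothesis $\sum k\log(k)\equiv 0$ is ``exactly the statement $x_1\in V^{(\omega)}$'' via the condition that the divisor of $x_1$ be a $p$-th power, which you say ``reduces to the requirement that a specific cyclotomic unit $u$ be a $p$-th power modulo the primes of $\tilde K$ above $N$''. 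This conflates two unrelated conditions. The divisor condition is a condition on valuations, and for every natural candidate (the $\omega$-eigencomponent of a generator of $\prod_t t(\mathfrak{p})$, the Gauss-sum power $\mathcal{G}^p$, $\zeta_p$, global units) it holds automatically: the primes of $\mathbf{Q}(\zeta_p)$ above $N$ are totally ramified in $\tilde K$, so $t(\mathfrak{p})\mathcal{O}_{\tilde K}=t(\mathfrak{P})^p$ and any $N$-unit of $\mathbf{Q}(\zeta_p)$ already has divisor a $p$-th power in $\tilde K$; moreover for $\mathfrak{q}\nmid p$ the extension $\tilde K(x^{1/p})/\tilde K$ is unramified at $\mathfrak{q}$ if and only if $p\mid v_{\mathfrak{q}}(x)$, with no condition on residues. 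Hence in your picture the only genuine obstruction to $x_1\in V^{(\omega)}$ is the local condition at the primes above $p$, which is a $p$-adic statement and has no visible relation to a discrete logarithm modulo $N$; your plan never explains how $\sum k\log k \pmod p$ would emerge, and Gross--Koblitz (an $N$-adic formula) does not evaluate local conditions at $p$. One could hope to convert the condition at $p$ into one at $N$ by the product formula for Hilbert symbols, but that is a substantial missing step, and you also leave unaddressed the independence of $x_1$ from the genus element $\mathcal{G}^p$.

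The paper's mechanism is different, and it is where the mod-$N$ logarithm actually comes from. It filters $\mathcal{C}_{K_1}$ by powers of the augmentation ideal $I_S$ of $\mathbf{Z}_p[S]$, $S=\mathrm{Gal}(K_1/\mathbf{Q}(\zeta_p))$; multiplication by the element $\Delta$ shifts $T$-eigenvalues by $\omega$, so $r_K=\sum_i\alpha_i(\chi_0)$ with $I_1(\chi_0)$ a quotient of $I_0(\omega^{-1})$. Genus theory (Jaulent) gives $I_0(\omega^{-1})\simeq\mathbf{F}_p$, and the kernel $U_0(\omega^{-1})$ is generated by the image of the ambiguous class $\gamma=\prod_t c(t(\mathfrak{P}))^{\omega(t)}$; one has $\alpha_1(\chi_0)=1$ if and only if this image vanishes, which by local class field theory at the primes above $N$ (the Hilbert symbol $(u_{\omega^{-1}},N)_{\mathfrak{p}}$, via Halter-Koch's theorem) happens if and only if $u_{\omega^{-1}}$ is a $p$-th power modulo $\mathfrak{p}$, i.e. $\Lambda(u_{\omega^{-1}}/N)\equiv 0$. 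That is the quantity Gross--Koblitz then evaluates as a multiple of $\sum_{k=1}^{(N-1)/2} k\log k$. Note that it is the $\omega^{-1}$-eigenspace of the class group that intervenes, not the $\omega$-eigenspace of units, and that the ``independence'' of the two contributions is automatic because they live in different graded pieces of the filtration. As written, your proposal would not produce the theorem.
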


Soient $\zeta_p$ et $\zeta_N$ dans $\overline{\mathbf{Q}}$ des racines primitives $p$-ièmes et $N$-ièmes de l'unité respectivement. La quantité $\sum_{k=1}^{\frac{N-1}{2}} k\cdot  \log (k)$ intervient naturellement dans l'étude du corps cyclotomique $\mathbf{Q}(\zeta_p, \zeta_N)$.

Soit $\omega : (\mathbf{Z}/p\mathbf{Z})^{\times} \rightarrow \mathbf{Z}_p^{\times}$ le caractère de Teichmüller (vérifiant $\omega(a) \equiv a \text{ (mod } p\text{)}$). Dans cet article, $\chi$ désigne un caractère $(\mathbf{Z}/p\mathbf{Z})^{\times} \rightarrow \mathbf{Z}_p^{\times}$. On notera aussi $\chi_0 : (\mathbf{Z}/p\mathbf{Z})^{\times} \rightarrow \mathbf{Z}_p^{\times}$ le caractère trivial. Si $G$ est un groupe fini, on note $\mathbf{Z}_p[G]$ l'algèbre du groupe $G$ et, si $g \in G$, on note $[g] \in \mathbf{Z}_p[G]$ l'élément correspondant. On note $I_G \subset \mathbf{Z}_p[G]$ l'idéal d'augmentation. On note $e_{\chi} = \frac{1}{p-1}\cdot \sum_{a \in \mathbf{F}_p^{\times}} \chi^{-1}(a)\cdot [a] \in \mathbf{Z}_p[(\mathbf{Z}/p\mathbf{Z})^{\times}]$ l'idempotent associé à $\chi$. Si $M$ est un groupe abélien, on note $M(\chi) = e_{\chi}\cdot (M \otimes_{\mathbf{Z}} \mathbf{Z}_p)$.

Soit $\zeta_N^{+}$ un générateur de l'unique sous-extension de degré $p$ de $\mathbf{Q}(\zeta_N)$, et soit $K_0 = \mathbf{Q}(\zeta_p, \zeta_N^+)$. Si $\chi : (\mathbf{Z}/p\mathbf{Z})^{\times} \rightarrow \mathbf{Z}_p^{\times}$ est un caractère, notons $r_{K_0}(\chi)$ le $p$-rang de $\mathcal{C}_{K_0}(\chi)$.

\begin{thm}\label{r_0}
On a $r_{K_0}(\omega^{-1}) \geq 1$. 
On a  $r_{K_0}(\omega^{-1})\geq 2$ si et seulement si  $\sum_{k=1}^{\frac{N-1}{2}} k\cdot \log(k) \equiv 0  \text{ (mod } p\text{)}$.
\end{thm}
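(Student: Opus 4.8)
The plan is to convert the computation of $r_{K_0}(\omega^{-1})$ into a Kummer-theoretic problem and then isolate a single local condition that will turn out to be $\sum_{k=1}^{(N-1)/2} k\log(k) \pmod p$. Since $\zeta_p \in K_0$, global class field theory together with Kummer theory identifies $\mathcal{C}_{K_0}/p\,\mathcal{C}_{K_0}$ with the $\mathbf{F}_p$-dual of the unramified radical
\[
V=\bigl\{\, x\in K_0^\times/(K_0^\times)^p \ :\ p\mid v_{\mathfrak q}(x)\ \text{for all } \mathfrak q,\ \text{and } K_0(x^{1/p})/K_0 \text{ is unramified above } p \,\bigr\}.
\]
The Kummer pairing $\mathrm{Gal}(H/K_0)\times V \to \mu_p$, where $H$ is the maximal everywhere-unramified abelian extension of exponent $p$, is equivariant for $\Delta=\mathrm{Gal}(\mathbf{Q}(\zeta_p)/\mathbf{Q})$, which acts on $\mu_p$ through $\omega$. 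A direct check shows the $\omega^{-1}$-eigenspace on the Galois side pairs perfectly with the $\omega^{2}$-eigenspace of $V$, so $r_{K_0}(\omega^{-1}) = \dim_{\mathbf{F}_p} V(\omega^{2})$; everything is reduced to producing elements of $V(\omega^2)$. Note that $\omega^{-1}$ and $\omega^2=\omega\cdot(\omega^{-1})^{-1}$ are exactly reflection-dual, which is reassuring.

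Next I would record the local geometry. Write $L=\mathbf{Q}(\zeta_p)$ and $C=\mathrm{Gal}(K_0/L)\cong\mathbf{Z}/p\mathbf{Z}$. Since $N\equiv 1 \pmod p$, the prime $N$ splits completely in $L$ into $p-1$ primes $\mathfrak{N}_1,\dots,\mathfrak{N}_{p-1}$ that $\Delta$ permutes simply transitively, and $K_0/L$ is totally (and tamely) ramified exactly at these, becoming $\mathfrak{N}_i=\mathfrak P_i^{\,p}$ in $K_0$. A Kummer-pairing computation shows the generator $\beta$ of $K_0=L(\beta^{1/p})$ lies in the $\omega$-eigenspace of $L^\times/(L^\times)^p$, with $(\beta)=\prod_i \mathfrak N_i^{a_i}$ and $p\nmid a_i$; this is a Gauss–Jacobi-sum type element and is the source of all nontrivial behaviour at $N$. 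Crucially, the contributions from the base field $L$ in the relevant even eigenspace vanish: $\mathcal C_L(\omega^{-1})=0$ because $p\nmid B_2$ for $p\geq 5$ (Herbrand–Ribet), and then $\mathcal C_L(\omega^{2})=0$ by Leopoldt reflection. Hence any element of $V(\omega^2)$ that is relevant must genuinely involve the primes above $N$ or the cyclotomic units twisted into $K_0$.

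For the inequality $r_{K_0}(\omega^{-1})\geq 1$ I would argue by genus theory for $K_0/L$. The $p-1\geq 4$ ramified primes $\mathfrak N_i$, together with the $\Delta$-action, make the ambiguous class group nontrivial in the relevant eigenspace; equivalently, $V(\omega^2)$ contains the class of the element $x_0$ obtained by rendering the $\omega^2$-part of $\prod_i \mathfrak P_i$ a principal $p$-th-power ideal (a Jacobi-sum element built from the $\mathfrak N_i$), which is unramified everywhere by construction since its valuations are all divisible by $p$ and its behaviour at $p$ is that of an element coming from $L$. This recovers, inside the $\omega^{-1}$-eigenspace, the classical fact $\mathcal C_K\neq 0$ quoted in the introduction.

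Finally, for the equivalence, the second dimension of $V(\omega^2)$ must come from a new class, and the only available candidate, after the vanishing results above, is (the $\omega^2$-component of) an explicit cyclotomic unit $u$ of $\mathbf{Q}(\zeta_p,\zeta_N)$ pushed into $K_0$. All the conditions defining $V$ are then automatic for $u$ except one single $\mathbf{F}_p$-valued obstruction, namely whether the corresponding extension splits at the primes above $N$ — equivalently, whether $u$ is a local norm from $K_{0,\mathfrak P_i}/L_{\mathfrak N_i}$, which by local class field theory is a power-residue symbol of $u$ modulo $N$. Thus $r_{K_0}(\omega^{-1})\geq 2$ if and only if this symbol vanishes, and the \emph{crux} of the argument is the explicit evaluation of this symbol: I would expand the cyclotomic unit as a product over $\mathbf{Z}/N\mathbf{Z}$ and identify its logarithm with that of Merel's number $\prod_{k=1}^{(N-1)/2}k^{k}$, whose image under $\log$ is precisely $\sum_{k=1}^{(N-1)/2} k\log(k)$. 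The two delicate points I anticipate are (i) this explicit reciprocity computation at $N$ — where the Gross–Koblitz formula and $N$-adic Gamma identities of the later sections give the clean bookkeeping, but a direct cyclotomic-unit calculation should already suffice — and (ii) confirming that no further spurious classes enter $V(\omega^2)$ from units or from $\mathcal C_{K_0}[p](\omega^2)$, which the reflection/regularity input for $p\geq 5$ is designed to exclude.
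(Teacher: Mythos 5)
Your opening reduction is sound: the $\Delta$-equivariant Kummer pairing does give $r_{K_0}(\omega^{-1})=\dim_{\mathbf{F}_p}V(\omega^{2})$, and the vanishings $\mathcal{C}_{\mathbf{Q}(\zeta_p)}(\omega^{-1})=\mathcal{C}_{\mathbf{Q}(\zeta_p)}(\omega^{2})=0$ you invoke are correct. But reflecting into the \emph{even} eigenspace is precisely what the paper avoids, and it creates a gap you do not close. The paper filters $\mathcal{C}_{K_0}(\omega^{-1})$ by powers of the augmentation ideal of $\mathrm{Gal}(K_0/\mathbf{Q}(\zeta_p))$, computes $I_0(\omega^{-1})\simeq\mathbf{F}_p$ exactly by genus theory, and identifies the obstruction to $I_1(\omega^{-1})\neq 0$ with the vanishing of the class of the twisted ramified prime, i.e.\ with a Hilbert symbol (propositions~\ref{sansnom1},~\ref{kernel},~\ref{critere_zero_r_0}); the whole point of staying in the odd eigenspace $\omega^{-1}\neq\omega$ is that $\mathbf{Z}[\zeta_p]^{\times}(\omega^{-1})=0$ (lemme~\ref{cyclotomic_unit}), so all unit contributions disappear and the count is an equality. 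In your picture $V(\omega^{2})$ lives where $(\mathcal{O}_{K_0}^{\times}/(\mathcal{O}_{K_0}^{\times})^{p})(\omega^{2})$ has dimension of order $p$; cutting this down to the unramified radical requires controlling the Kummer condition at the primes above $p$ for a large space of units, and this is indispensable for the forward implication ($r_{K_0}(\omega^{-1})\geq 2\Rightarrow\sum k\log k\equiv 0$). Saying that "the reflection/regularity input for $p\geq 5$" excludes spurious classes, or that a cyclotomic unit is "the only available candidate", is an assertion, not an argument. There is also a logical slip: splitting at the primes above $N$ is \emph{not} one of the conditions defining $V$ (a unit radical is automatically unramified at a tame place), so your single "obstruction" is not where you place it; the correct dual statement is the paper's, namely that the relevant class is that of the ramified prime itself, detected by whether $u_{\omega^{-1}}$ (the generator of $e_{\omega^{-1}}\prod_t t(\mathfrak{p})^{\omega(t)}$) is a norm locally at $\mathfrak{p}$, i.e.\ by $\Lambda(u_{\omega^{-1}}/\beta)$ with $\beta=\mathcal{G}^p$ the Kummer generator of $K_0/\mathbf{Q}(\zeta_p)$.

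The second gap is the one that carries the actual content of the theorem. The element whose $p$-th power residue modulo $N$ gives the criterion is not a cyclotomic unit but a twisted Gauss sum: $u_{\omega^{-1}}^{B_{1,\omega}}=\mathcal{G}_1$ (proposition~\ref{unit_gauss}). Evaluating $\Lambda(u_{\omega^{-1}}/N)$ and showing it equals $-8\sum_{k=1}^{(N-1)/2}k\log(k)$ occupies the last two sections of the paper: it requires the Gross--Koblitz formula, the identities for the $N$-adic Gamma function and the Stickelberger element (propositions~\ref{Stickelberger} and~\ref{Gamma}), and the Kummer congruence showing $\Lambda(\mathcal{G}^p/N)=0$ (théorème~\ref{identite_garett}). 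Your plan to "expand the cyclotomic unit as a product over $\mathbf{Z}/N\mathbf{Z}$" and read off Merel's number directly is exactly the step the paper could not make elementary; as written, the identification of the obstruction with $\sum k\log k$ is hoped for rather than proved.
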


Le sens direct est implicitement dû à Schoof \cite{Schoof} (comme nous le verrons dans la section $4$), qui semble donc être le premier à considérer la quantité $\sum_{k=1}^{\frac{N-1}{2}} k\cdot  \log(k)  \text{ (mod } p\text{)}$. Dans \cite{CE}, Calegari et Emerton construisent par la théorie des déformations un anneau $R$ qu'ils identifient au complété $\mathfrak{p}$-adique $\mathbf{T}_{\mathfrak{p}}$ de l'algèbre de Hecke $\mathbf{T}$ considérée par Mazur dans \cite{Mazur_Eisenstein}, où $\mathfrak{p}$ est le $p$-idéal maximal d'Eisenstein. L'étude de ces anneaux par la théorie du corps de classe d'une part et les symboles modulaires d'autre part (\cite[Théorème 2]{Merel}) mène au théorème~\ref{CE}. Cela permet de montrer le sens réciproque du théorème~\ref{r_0} (bien que ce fait n'ait pas été noté par \cite{CE}). Nous donnons cependant une preuve directe, par la pure théorie algébrique des nombres, des théorèmes~\ref{CE} et~\ref{r_0}, ce qui répond à une question de Calegari et Emerton \cite[note p.$99$]{CE}.

Comme le note Schoof dans l'introduction de \cite{Schoof}, ce résultat qui concerne la structure d'un groupe des classes cyclotomique n'est pas prédit par la théorie d'Iwasawa qui ne considère que le cardinal des groupes de classes.

Schoof établit (implicitement) le résultat suivant dans \cite{Schoof} : 
\begin{thm}\label{r_0>i}
Soit $i$ un entier tel que $1\leq i \leq p-1$.
Si $r_{K_0}(\omega^{-1})>i$, alors pour tout $1 \leq j \leq i$, on a $\sum_{k=1}^{\frac{N-1}{2}} k\cdot \log(k)^j \equiv 0  \text{ (mod } p\text{)}$. 
\end{thm}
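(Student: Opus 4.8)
The plan is to package $\mathcal{C}_{K_0}(\omega^{-1})$ as a cyclic module over the group ring of $\Gamma = \mathrm{Gal}(\mathbf{Q}(\zeta_N^+)/\mathbf{Q})$ and to read the sums $\sum_k k\log(k)^j$ off its augmentation filtration. Write $\Gamma = \langle \sigma\rangle \cong \mathbf{Z}/p\mathbf{Z}$ and set $M = \mathcal{C}_{K_0}(\omega^{-1})/p$, a module over $R = \mathbf{F}_p[\Gamma] \cong \mathbf{F}_p[T]/(T^p)$ with $T = \sigma - 1$ (using $(1+T)^p \equiv 1 \pmod p$). First I would show that $M$ is cyclic. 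The inequality $r_{K_0}(\omega^{-1}) \geq 1$ of Theorem~\ref{r_0} reflects that the $\omega^{-1}$-component of the genus group is one-dimensional: only the primes above $N$ ramify in $K_0/\mathbf{Q}(\zeta_p)$, and Chevalley's ambiguous class number formula, taken in the $\omega^{-1}$-eigenspace, gives $\dim_{\mathbf{F}_p} M_\Gamma = \dim_{\mathbf{F}_p}(M/TM) = 1$. By Nakayama over the local ring $R$ this forces $M \cong \mathbf{F}_p[T]/(T^m)$ with $m = \dim_{\mathbf{F}_p} M = r_{K_0}(\omega^{-1})$.

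With this structure the statement becomes purely about the filtration: $r_{K_0}(\omega^{-1}) > i$ is equivalent to $T^i M \neq 0$, hence to $T^j M \neq 0$ for every $j \leq i$ (since $T^j M \supseteq T^i M$). So the theorem reduces to the key implication: if $T^j M \neq 0$ then $\sum_{k=1}^{\frac{N-1}{2}} k\log(k)^j \equiv 0 \pmod p$. The value of the reduction is that I may treat $j = 1, \dots, i$ in turn and assume at stage $j$ that the lower sums $S_1 \equiv \cdots \equiv S_{j-1} \equiv 0$ are already known.

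To detect the vanishing of $T^j M$ I would use Kummer theory. Since $\zeta_p \in K_0$, the $\Delta$-equivariant Kummer pairing valued in $\mu_p$ identifies $\mathcal{C}_{K_0}(\omega^{-1})[p]$ with the dual of a group of Kummer elements $\beta$ in the $\omega^2$-eigenspace (the shift by $\omega$ coming from the cyclotomic twist in the pairing): elements of $K_0^\times$ whose divisor is a $p$-th power and which are locally $p$-th powers above $p$. Concretely these are manufactured from the Gauss/Jacobi sums attached to the order-$p$ character $\psi = \zeta_p^{\log(\cdot)}$ of $\mathbf{F}_N^\times$, whose ideal factorization is governed by Stickelberger. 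The generator of $\Gamma$ acts through the Artin symbol, so an element $a \in \mathbf{F}_N^\times$ acts as $\sigma^{\log a} = (1+T)^{\log a} = \sum_{j} \binom{\log a}{j} T^j$. Feeding this expansion into the pairing and computing the contribution at $N$ by reduction modulo the prime above $N$ followed by $\log$, the obstruction to $T^j M \neq 0$ comes out as a nonzero multiple of $\sum_{k} k\binom{\log k}{j} \pmod p$, so that $T^j M \neq 0$ forces $\sum_k k\binom{\log k}{j} \equiv 0$. Finally, since $\binom{x}{j} = \frac{1}{j!}\bigl(x^j + c_{j-1}x^{j-1} + \cdots + c_1 x\bigr)$ has no constant term, the already-secured vanishing of $S_1,\dots,S_{j-1}$ lets me discard the lower terms, leaving $\sum_k k\binom{\log k}{j} \equiv \frac{1}{j!}S_j$; hence $T^j M \neq 0$ yields $S_j \equiv 0$.

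The main obstacle is this third step: pinning down the explicit Kummer element generating $M$ and proving that the $j$-th step of the augmentation filtration is computed precisely by $\sum_k k\binom{\log k}{j}$. In particular one must get the weight $k$ and the half-range $1 \leq k \leq \frac{N-1}{2}$ right; here $\log(-1) \equiv 0$ allows the folding $a \mapsto N-a$, while the weight $k$ should emerge from the first Stickelberger/Bernoulli coefficient of the $\omega^{-1}$-twist. This requires a genuine input, either an explicit reciprocity computation of the relevant Artin symbol or, in the spirit of Schoof, an index computation for the module of cyclotomic units; once the leading term is isolated, the binomial bookkeeping and the passage to $\log(k)^j$ are routine.
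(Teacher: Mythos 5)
Your overall strategy is the paper's own: establish that $M=\mathcal{C}_{K_0}(\omega^{-1})/p$ is cyclic over $\mathbf{F}_p[T]/(T^p)$ (the paper gets $\dim M/TM=1$ the same way, via the genus-theoretic computation $I_0(\omega^{-1})\simeq\mathbf{F}_p$ in the proof of Theorem~\ref{critere_r_0}), then detect the graded pieces of the augmentation filtration by expanding an annihilator coming from Gauss sums, writing $[a]=(1+T)^{\log a}=\sum_j\binom{\log a}{j}T^j$ and using the inductively secured vanishing of $S_1,\dots,S_{j-1}$ to isolate $\frac{1}{j!}S_j$. The binomial bookkeeping, the weight $k$ coming from $\sum_{a=1}^{r-1}\omega(a)\equiv\frac{r^2-r}{2}$, and the folding to the half-range are exactly what the paper does (Proposition~\ref{Stickelberger} and the proof of Lemma~\ref{B_2}, applied verbatim with $\log(k)^j$ in place of $\log(k)$).

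The gap is the one you flag yourself: you assert, but do not prove, that the obstruction to $T^jM\neq 0$ is a nonzero multiple of $\sum_k k\binom{\log k}{j}$. That assertion \emph{is} the theorem; the Kummer-pairing picture you sketch does not produce it without exhibiting the explicit Kummer element and carrying out the reciprocity computation you defer. The paper supplies precisely this input by quoting that the twisted Stickelberger element $\phi_{\omega^{-1}}$, whose coefficient at $[r^{-1}]$ is $B_{1,\omega}+\sum_{a=1}^{r-1}\omega(a)$, lies in the Fitting ideal of the $\mathbf{Z}_p[[T]]$-module $\mathcal{C}_{K_0}(\omega^{-1})$ (Proposition $2.1$ of \cite{Schoof}, i.e.\ Stickelberger's theorem in twisted form). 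Since the Fitting ideal of a module whose $p$-rank exceeds $i$ reduces mod $p$ into $(T^{i+1})$, the coefficients of $\phi_{\omega^{-1}}$ at $T^j$ for $j\le i$ vanish mod $p$, and by induction these are nonzero multiples of the $S_j$. Two remarks: with the Fitting-ideal formulation the cyclicity of $M$ is not actually needed, so your first reduction, while correct, can be bypassed; and until you replace the deferred step by the Stickelberger--Fitting statement (or an equivalent worked-out index or reciprocity computation), what you have is a correct reduction of the theorem to its essential content, not a proof of it.
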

La réciproque du théorème~\ref{r_0>i} est vraie au moins pour $i \leq 2$. Nous le montrerons en effet dans un prochain article grâce aux symboles modulaires et au travail de Calegari--Emerton.

La réciproque du théorème~\ref{CE} est fausse en général (contrairement à ce qu'espéraient Calegari et Emerton sur la base de résultats numériques, \textit{cf.} \cite{CE} dernier paragraphe de la page $142$) : par exemple si $p=7$ et $N = 337$, on a $\sum_{k=1}^{\frac{N-1}{2}} k \cdot  \log(k) \neq 0  \text{ (mod } p\text{)}$ mais $r_K=2$ (en fait le groupe des classes de $K$ est isomorphe à $(\mathbf{Z}/7\mathbf{Z})^2$ par SAGE en supposant GRH pour les fonctions L de Dirichlet). Pour montrer leur critère, Calegari et Emerton construisent explicitement (en utilisant les représentations galoisiennes) deux $\mathbf{F}_p$-extensions abéliennes indépendantes partout non ramifiées de $K$. Cependant les résultats de Iimura montrent que ces deux extensions ne correspondent qu'aux deux premiers crans d'une filtration de $\mathcal{C}_K$. Iimura, en étudiant ces deux premiers crans, s'intéresse au cas particulier suivant :
\begin{thm} [\cite{Iimura}] \label{Iimura}
Supposons qu'il existe deux entiers $a$ et $b$ tels que $N = \frac{a^p+b^p}{a+b}$. Si $ \log(a+b) \equiv 0  \text{ (mod } p\text{)}$, alors on a $r_K \geq 2$.
\end{thm}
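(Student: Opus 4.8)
Le plan est de ramener $r_K \geq 2$ à un énoncé de théorie de Kummer sur le corps $L = \mathbf{Q}(\zeta_p, \alpha)$, qui contient $\zeta_p$. Comme $[L:K] = p-1$ est premier à $p$, l'extension des idéaux induit un isomorphisme $\mathcal{C}_K \cong \mathcal{C}_L(\chi_0)$ (la partie $\Delta$-invariante, où $\Delta = \mathrm{Gal}(L/K) \cong (\mathbf{Z}/p\mathbf{Z})^{\times}$), de sorte que $r_K = \dim_{\mathbf{F}_p} \mathcal{C}_L(\chi_0)/p$. Il suffit donc de produire deux classes $\mathbf{F}_p$-indépendantes dans $\mathcal{C}_L(\chi_0)[p]$, c'est-à-dire, par dualité de Kummer (le couplage non ramifié étant $\Delta$-équivariant pour l'action $\omega$ sur $\mu_p$), deux éléments radicaux indépendants dans la composante $\omega$ de $L^{\times}/(L^{\times})^p$ qui sont partout non ramifiés. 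Le premier est fourni par la théorie du genre (c'est l'assertion $r_K \geq 1$ de l'introduction), et tout le travail consiste à en produire un second.

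Les radicaux proviennent de la factorisation. Puisque $p$ est impair, on a dans $\mathbf{Z}[\zeta_p]$ l'égalité $a^p + b^p = \prod_{j=0}^{p-1}(a + \zeta_p^j b)$, d'où $N = \prod_{j=1}^{p-1}(a + \zeta_p^j b)$. Comme $N \equiv 1 \text{ (mod } p\text{)}$, $N$ est totalement décomposé dans $\mathbf{Q}(\zeta_p)$ et totalement ramifié dans $L/\mathbf{Q}(\zeta_p)$ (polynôme d'Eisenstein $x^p - N$) ; un calcul de norme montre que chaque $a + \zeta_p^j b$ engendre un premier de $\mathbf{Q}(\zeta_p)$ au-dessus de $N$, donc $(a + \zeta_p^j b)\mathcal{O}_L = \mathfrak{P}_{\sigma(j)}^p$ pour une certaine bijection $\sigma$, tandis que $(\alpha) = \prod_j \mathfrak{P}_j$. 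Ainsi chaque $a + \zeta_p^j b$ engendre une puissance $p$-ième d'idéal, et la combinaison $g = \prod_{j=1}^{p-1} (a + \zeta_p^j b)^{c_j}$ avec $c_j \equiv j^{-1} \text{ (mod } p\text{)}$ est, modulo $(L^{\times})^p$, un vecteur propre associé à $\omega$ dont l'idéal est une puissance $p$-ième. L'extension $L(g^{1/p})/L$ est donc non ramifiée hors de $p$ (en $N$ : $v_{\mathfrak{P}}(g) \equiv 0 \text{ (mod } p\text{)}$ et la caractéristique résiduelle est $N \neq p$).

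Restent deux points, qui forment le cœur de la preuve. D'une part, il faudra corriger $g$ (par une unité cyclotomique ou une puissance de $1 - \zeta_p$) pour le rendre non ramifié aussi en $p$ : en une place $v \mid p$ on a $a + \zeta_p^j b = (a+b)\bigl(1 + \tfrac{jb}{a+b}(\zeta_p - 1) + \cdots\bigr)$, et comme $\sum_j c_j = \tfrac{p(p-1)}{2} \equiv 0 \text{ (mod } p\text{)}$ le facteur $(a+b)$ disparaît modulo les puissances $p$-ièmes, ce qui ramène la ramification en $p$ à un calcul logarithmique local. D'autre part, il faudra vérifier, en contrôlant la contribution des unités dans la suite exacte de réflexion, que la classe obtenue est indépendante de la classe de genre ; c'est ici qu'intervient l'hypothèse $\log(a+b) \equiv 0 \text{ (mod } p\text{)}$, via un symbole local (symbole de reste de puissance, ou symbole de Hilbert comme chez Iimura) aux places au-dessus de $N$, où $\zeta_p$ se réduit sur une racine primitive $p$-ième de l'unité de $\mathbf{F}_N$ et où tous les $a + \zeta_p^j b$ se réduisent sur $a + b$. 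Le principal obstacle sera précisément d'expliciter ce symbole et de constater qu'il vaut, à une unité près, $\log(a+b) \bmod p$ : c'est le calcul local délicat, pour lequel la formule de Gross--Koblitz et la fonction Gamma $N$-adique de la suite de l'article fournissent l'outil adéquat.

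Enfin, si l'on admet la formule annoncée dans le résumé, à savoir $\log(a+b) + 8\sum_{k=1}^{\frac{N-1}{2}} k \cdot \log(k) \equiv 0 \text{ (mod } p\text{)}$, le théorème~\ref{Iimura} se déduit immédiatement du théorème~\ref{CE} : l'hypothèse $\log(a+b) \equiv 0 \text{ (mod } p\text{)}$ force $\sum_{k=1}^{\frac{N-1}{2}} k \cdot \log(k) \equiv 0 \text{ (mod } p\text{)}$ puisque $8$ est inversible modulo $p$ (car $p \geq 5$), d'où $r_K \geq 2$. Cette voie courte contourne le calcul local, mais elle repose sur le résultat principal de l'article plutôt que sur une construction directe d'extensions non ramifiées.
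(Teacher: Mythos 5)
Your final paragraph is essentially the paper's own derivation. Theorem~\ref{Iimura} is quoted there as a result of Iimura, and the paper's contribution is Theorem~\ref{a+b}, which gives $\log(a+b)\equiv -8\sum_{k=1}^{(N-1)/2}k\cdot\log(k)\ (\mathrm{mod}\ p)$; since $8$ is invertible mod $p\ge 5$, the hypothesis $\log(a+b)\equiv 0$ becomes the hypothesis of Theorem~\ref{CE}, which the paper reproves via Theorem~\ref{class_group}. That short route is correct and non-circular within the paper.

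The direct construction in your first three paragraphs, however, has a genuine flaw that no local computation will repair: the radical $g=\prod_{j=1}^{p-1}(a+\zeta_p^jb)^{c_j}$ with $c_j\equiv j^{-1}$ cannot give a class independent of the genus class. Each $(a+\zeta_p^jb)$ is the prime $\sigma_j(\mathfrak{q})$ of $\mathbf{Z}[\zeta_p]$ above $N$, so the ideal $(g)$ coincides, up to $p$-th powers of ideals and up to replacing $\mathfrak{q}$ by $\sigma_e(\mathfrak{p})$, with $\prod_{a}\sigma_a(\mathfrak{p})^{a^{-1}}$ --- which is exactly the ideal of the Gauss-sum radical $\beta=\mathcal{G}^p$ (equivalently of $u_{\omega^{-1}}$) cutting out the genus extension. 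Hence $g\cdot\beta^{-e}$ is, modulo $p$-th powers and modulo $\mathcal{C}_{\mathbf{Q}(\zeta_p)}(\omega)=0$, a unit of $\mathbf{Z}[\zeta_p]$ in the $\omega$-eigenspace; by the lemme~\ref{cyclotomic_unit} that eigenspace of $\mathbf{Z}[\zeta_p]^{\times}$ modulo $p$-th powers is generated by $\zeta_p$ alone, and $L(\zeta_p^{1/p})=L(\zeta_{p^2})$ is ramified at $p$. So the $\omega$-eigen-radicals supported above $N$ span at most one unramified line, the one you already have from genus theory. In Iimura's argument (and in the paper's Sections 2--3) the second class does not come from a second Kummer radical over $L$ at all: it lives one step deeper in the filtration, in $I_1=J_1/J_2$ with $J_1=(I_S\cdot\mathcal{C}_{K_1})\cdot\mathcal{C}_{K_1}^p$, and corresponds to a non-abelian extension of $\mathbf{Q}(\zeta_p)$. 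The hypothesis $\log(a+b)\equiv 0$ enters as the vanishing of $U_0(\omega^{-1})=\varphi(\mathcal{C}_1(\omega^{-1}))$ (propositions~\ref{kernel} et~\ref{critere_zero}, together with Iimura's identification of $\Lambda(u_{\omega^{-1}}/N)$ with a nonzero multiple of $\log(a+b)$), which makes the surjection $I_0(\omega^{-1})\to I_1(\chi_0)$ injective between one-dimensional spaces, whence $\alpha_1(\chi_0)=\alpha_0(\omega^{-1})=1$ and $r_K\ge\alpha_0(\chi_0)+\alpha_1(\chi_0)=2$.
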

On peut soupçonner que $ \log(a+b)$ et $\sum_{k=1}^{\frac{N-1}{2}} k\cdot  \log(k)$ sont proportionnels. C'est effectivement le cas et on peut faire le lien entre les théorèmes~\ref{CE} et~\ref{Iimura} : 
\begin{thm}\label{a+b}
Supposons que $N = \frac{a^p+b^p}{a+b}$ avec $a$, $b$ $\in \mathbf{Z}$. Alors on a 
$$ \log(a+b) \equiv -8\cdot \sum_{k=1}^{\frac{N-1}{2}} k\cdot  \log(k)  \text{ (mod } p\text{)} \text{ .}$$
\end{thm}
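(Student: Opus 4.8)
The plan is to compute $\log(a+b) \bmod p$ head-on, using the splitting of $N$ in $\mathbf{Q}(\zeta_p)$ together with the Gross--Koblitz formula. First I would record the factorization: since $p$ is odd, $a^p+b^p=\prod_{c=0}^{p-1}(a+\zeta_p^c b)$, whence $N=\prod_{c=1}^{p-1}(a+\zeta_p^c b)$ in $\mathbf{Z}[\zeta_p]$. As $N\equiv 1 \bmod p$ splits completely, each $\pi_c:=a+\zeta_p^c b$ with $1\le c\le p-1$ generates a distinct prime $\mathfrak{p}_c$ above $N$, and $\mathfrak{p}_1=(a+\zeta_p b)$. Reduction modulo $\mathfrak{p}_1$ sends $\zeta_p$ to a primitive $p$-th root of unity $\zeta\in\mathbf{F}_N^{\times}$ with $\zeta\equiv -ab^{-1}$, and it identifies $\mathbf{Z}[\zeta_p]/\mathfrak{p}_1=\mathbf{F}_N$ compatibly with $\mathbf{Z}/N\mathbf{Z}$. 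In particular $a+b\equiv b(1-\zeta)\bmod N$, so $\log(a+b)\equiv \log(b)+\log(1-\zeta)\bmod p^{\nu}$. One should note that the pair $(a,b)$ is essentially unique: scaling by $\lambda$ multiplies $N$ by $\lambda^{p-1}$, so $\log(a+b)\bmod p$ is a genuine invariant of $N$, which is why a clean closed form can hold. This reduces the theorem to computing $\log(b)+\log(1-\zeta)$.

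The arithmetic content is that $\log(b)\bmod p$ is not formal: it is a Gauss-sum period. I would therefore introduce the order-$p$ character $\chi$ of $\mathbf{F}_N^{\times}$ attached to $\mathfrak{p}_1$, namely $\chi(x)=\zeta_p^{\log(x)}$, and its Gauss sums $g(\chi^c)=\sum_{x}\chi^c(x)\zeta_N^{x}$. By Stickelberger's theorem the associated Jacobi sums (which lie in $\mathbf{Z}[\zeta_p]$) generate explicit Stickelberger powers $\mathfrak{p}_1^{\theta}$ of $\mathfrak{p}_1$. Since $\mathfrak{p}_1$ is itself principal with generator $a+\zeta_p b$, the ratio of such a Jacobi-sum generator to the corresponding monomial in the $\pi_c$ is a cyclotomic \emph{unit} $v$. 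Reducing $v$ modulo $\mathfrak{p}_1$ and taking $\log$ then gives one expression in terms of $\log(b)$ and the $\log(\zeta^c-\zeta)$ (the $\pi_c$ for $c\neq 1$ being units mod $\mathfrak{p}_1$, with $\pi_c\equiv b(\zeta^c-\zeta)$), and a second expression in terms of the Gauss sums, to be evaluated next.

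To evaluate the Gauss-sum side I would invoke Gross--Koblitz: working $N$-adically it takes the shape $g(\chi^c)=-\pi^{\,c(N-1)/p}\,\Gamma_N(c/p)$, where $\pi^{N-1}=-N$ and $\Gamma_N$ is the $N$-adic Gamma function (I suppress the precise normalisation). The decisive elementary input is the congruence $\Gamma_N(c/p)\equiv (-1)^{m_c}(m_c-1)!\pmod N$, where $m_c\equiv c\,p^{-1}\bmod N$, that is $m_c=N-c\frac{N-1}{p}$; this holds because $\Gamma_N(x)\bmod N$ depends only on $x\bmod N$ and $\Gamma_N(m)=(-1)^{m}(m-1)!$ for an integer $1\le m\le N$. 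Consequently $\log\!\big(\Gamma_N(c/p)\bmod N\big)\equiv\sum_{j=1}^{m_c-1}\log(j)\bmod p$, since $\log(-1)\equiv 0$ ($-1$ has order $2$ while $\mathbf{Z}/p^{\nu}\mathbf{Z}$ has odd order). Substituting back turns $\log(b)+\log(1-\zeta)$ into a weighted sum $\sum_c w_c\sum_{j<m_c}\log(j)$ with Stickelberger weights $w_c$.

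The last step is to reorganise this double sum. The pairing $c\leftrightarrow p-c$, the symmetry $m_c+m_{p-c}=N+1$, Wilson's congruence $\sum_{j=1}^{N-1}\log(j)=\log((N-1)!)=\log(-1)\equiv 0$, and $\log(N-j)\equiv\log(j)$ collapse the inner factorial sums; an Abel summation against the weights $w_c$ (which are antisymmetric under $c\mapsto p-c$, so survive the pairing) then yields a universal scalar multiple of $\sum_{k=1}^{(N-1)/2}k\log(k)$. The main obstacle --- and essentially the only place where real work is concentrated --- is twofold: pinning down the exact weights $w_c$ and proving that the unit $v$ contributes nothing modulo $p$, and then tracking the signs (from $(-1)^{m_c}$, from the power of $\pi$, and from the reflection identity $\Gamma_N(x)\Gamma_N(1-x)=\pm 1$) through the final summation so that the universal constant comes out to be exactly $-8$ and not merely $\pm 8$. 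Finally I would treat the degenerate cases $N\mid ab(a+b)$ separately.
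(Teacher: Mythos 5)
Your plan assembles the right tools --- the principal generators $a+\zeta_p^c b$ of the primes above $N$, the reduction $\log(a+b)\equiv\log(b)+\log(1-\zeta)\pmod{p^{\nu}}$, Gross--Koblitz, and the congruence $\Gamma_N(c/p)\equiv\pm(m_c-1)!\pmod N$ with $m_c=N-c\cdot\frac{N-1}{p}$, which is a correct and even rather clean way to evaluate the Gamma factors --- and in this respect it runs parallel to the paper, which deduces the statement from Theorem~\ref{P} (proved via Gross--Koblitz and Proposition~\ref{Gamma}) together with Iimura's identity $\log(a+b)\equiv\sum_{c=2}^{p-1}(\omega(c)-1)\cdot\log(\sigma_c(a+b\,\zeta_p))\pmod p$. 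But what you submit is a roadmap whose two deferred ``obstacles'' are exactly where the theorem lives, so there is a genuine gap. First, the vanishing of the unit contribution is not a sign-chasing technicality that can be postponed: for an arbitrary unit $v\in\mathbf{Z}[\zeta_p]^{\times}$ the quantity $\log(v\bmod\mathfrak{p}_1)$ is \emph{not} zero in general. It vanishes only after the correct weighting, namely for combinations $\sum_c(\omega(c)-1)\cdot\log(\sigma_c(v))$: the part $\sum_c\log(\sigma_c(v))=\log(\mathrm{N}_{\mathbf{Q}(\zeta_p)/\mathbf{Q}}(v))=\log(\pm1)=0$, while $\sum_c\omega(c)\cdot\log(\sigma_c(v))=0$ because $(\mathbf{Z}[\zeta_p]^{\times}\otimes_{\mathbf{Z}}\mathbf{Z}_p)(\omega^{-1})=0$, the character $\omega^{-1}$ being odd and $\neq\omega$ (this is Lemma~\ref{cyclotomic_unit}, which is what makes Proposition~\ref{unit_gauss} work). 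Since you never pin down the weights $w_c$, this mechanism is invisible in your write-up, and the assertion ``$v$ contributes nothing modulo $p$'' cannot even be formulated, let alone proved.

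Second, the constant $-8$ \emph{is} the theorem (the paper stresses it has no known elementary proof), and you concede you cannot determine it; in fact in your setup not even $|w_c|$ is fixed, so nothing constrains the constant beyond ``some universal scalar.'' In the paper it drops out of a precise chain: $\Lambda\bigl(\prod_{c=1}^{p-1}\Gamma_N(c/p)^{\omega(c)}\bigr)\equiv-\mathcal{L}(\phi_{\omega^{-1}})$ (Proposition~\ref{Gamma}); then $\mathcal{L}(\phi_{\omega^{-1}})\equiv\frac{2}{3}\sum_{k=1}^{(N-1)/2}k\cdot\log(k)\pmod p$ (Proposition~\ref{Stickelberger} combined with Lemma~\ref{B_2}, i.e. $\sum_{k=1}^{N-1}k^2\log(k)\equiv-\frac{4}{3}\sum_{k\leq(N-1)/2}k\log(k)$); and finally division by $B_{1,\omega}\equiv\frac{1}{12}\pmod p$ produces $-8$. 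Your ``Abel summation against the weights $w_c$'' is precisely this computation; asserting that it ``yields a universal scalar multiple'' of $\sum_{k\leq(N-1)/2}k\log(k)$ is not a proof of the value of that scalar. Until the weights are fixed (via Stickelberger for the chosen Jacobi sums), the unit killed (via the $\omega^{-1}$-isotypic projection), and the double sum actually reorganized, the statement remains unproven.
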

Cette identité, dont nous n'avons pas trouvé de preuve élémentaire, est un cas particulier d'un résultat plus général que nous décrivons maintenant.
Supposons que $N$ est une norme dans $\mathbf{Q}(\zeta_p)$, \textit{i.e.} qu'il existe $u \in \mathbf{Z}[\zeta_p]$ tel que $N = \text{N}_{\mathbf{Q}(\zeta_p)/\mathbf{Q}}(u)$.  

Soit $\chi : (\mathbf{Z}/p\mathbf{Z})^{\times} \rightarrow \mathbf{Z}_p^{\times}$ un caractère impair ($\chi(-1)=-1$), différent de $\omega$, et tel que $B_{1,\chi^{-1}} := \sum_{a=1}^{p-1} a\cdot \chi^{-1}(a)$ soit dans $\mathbf{Z}_p^{\times}$ (autrement dit, si on écrit $\chi = \omega^{-i}$ avec $1 \leq i \leq p-4$, alors $p$ ne divise pas le nombre de Bernoulli $B_{i+1}$ car par \cite[Corollary $5.15$]{Washington} on a $B_{1, \omega^i} \equiv \frac{B_{i+1}}{i+1}  \text{ (mod } p \text{)}$). 

Si $a \in \mathbf{F}_p^{\times}$, soit $\sigma_a \in \text{Gal}(\mathbf{Q}(\zeta_p, \alpha)/\mathbf{Q}(\alpha))$ tel que $\sigma_a(\zeta_p) = \zeta_p^a$ ($\sigma_a$ s'identifie à un élément de $\text{Gal}(\mathbf{Q}(\zeta_p)/\mathbf{Q})$ par restriction).

Comme $N \equiv 1 \text{ (mod } p\text{)}$, l'idéal $(N)$ est totalement décomposé dans $\mathbf{Q}(\zeta_p)$. Les idéaux premiers au dessus de $N$ sont les $(\sigma_a(u))$ pour $a \in \mathbf{F}_p^{\times}$. Si $a \not\equiv 1  \text{ (mod } p\text{)}  $, notons que $\sigma_a(u)$ est premier à $(u)$. De plus $\mathbf{Z}[\zeta_p]/(u)$ s'identifie canoniquement à $\mathbf{F}_N^{\times}$. On peut donc considérer $ \log(\sigma_a(u))$. 
\begin{thm}\label{P}
On a : 
$$\sum_{a=2}^{p-1} (\chi^{-1}(a)-1)\cdot  \log(\sigma_a(u)) \equiv \frac{1}{B_{1,\chi^{-1}}} \cdot \left( \sum_{k=1}^{N-1} \left( \sum_{a=1}^{k-1} \chi^{-1}(a)\right)\cdot  \log(k)\right)  \text{ (mod } p^{\nu}\text{)}$$
\end{thm}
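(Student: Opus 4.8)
\begin{sketch}
L'idée directrice est de rendre explicites les réductions $\overline{\sigma_a(u)} \in \mathbf{F}_N^{\times}$ (pour $a \neq 1$) grâce aux sommes de Gauss, puis d'en extraire les logarithmes discrets par la formule de Gross--Koblitz, le second membre n'étant alors qu'une réécriture de valeurs de la fonction Gamma $N$-adique.

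\textbf{Étape 1 (sommes de Gauss et Stickelberger).} On introduit le caractère de résidu $p$-ième $\chi_{\mathfrak{N}} : \mathbf{F}_N^{\times} \rightarrow \mu_p$ attaché au premier $\mathfrak{N} = (u)$, défini par $\chi_{\mathfrak{N}}(x) \equiv x^{\frac{N-1}{p}} \pmod{\mathfrak{N}}$, ainsi que les sommes de Gauss $g_m = \sum_{x \in \mathbf{F}_N^{\times}} \chi_{\mathfrak{N}}^{m}(x)\, \zeta_N^{x}$ pour $1 \leq m \leq p-1$. Puisque $(N)$ est totalement décomposé, les premiers au-dessus de $N$ dans $\mathbf{Q}(\zeta_p)$ sont les $\mathfrak{N}_a = (\sigma_a(u))$, et le théorème de Stickelberger décrit la factorisation de $g_m$ (ou de la somme de Jacobi $J_m = g_m g_1 / g_{m+1} \in \mathbf{Z}[\zeta_p]$, qui évite $\zeta_N$) sous la forme $\prod_a \mathfrak{N}_a^{w_{a,m}}$, les poids $w_{a,m}$ provenant des parties fractionnaires $\langle \frac{am}{p} \rangle$ de l'élément de Stickelberger. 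Ceci ramène la quantité $\sum_{a}(\chi^{-1}(a)-1)\log(\sigma_a(u))$ --- qui est la $\chi^{-1}$-composante de $a \mapsto \log(\sigma_a(u))$, amputée de la partie triviale correspondant à la norme $N$ --- à la $\chi$-composante d'une combinaison des $\log g_m$.

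\textbf{Étape 2 (Gross--Koblitz).} Stickelberger ne donne que des idéaux, alors qu'un logarithme discret réclame les éléments eux-mêmes (à un résidu $p$-ième près), et lève au passage l'ambiguïté des unités de $\mathbf{Z}[\zeta_p]$ : c'est exactement ce que fournit la formule de Gross--Koblitz, qui exprime $N$-adiquement $g_m = (\text{unité explicite}) \cdot \pi^{e_m} \cdot \Gamma_N(\frac{m}{p})$, où $\pi^{N-1} = -N$, $e_m = \frac{m(N-1)}{p}$ et $\Gamma_N$ est la fonction Gamma $N$-adique. On en tire $\log(\sigma_a(u)) \pmod{p^{\nu}}$ à partir des $\Gamma_N(\frac{m}{p})$. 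En recombinant avec les poids de Stickelberger, le facteur $B_{1,\chi^{-1}} = \sum_a a\,\chi^{-1}(a)$ surgit comme la $\chi^{-1}$-composante (à un facteur près) de l'élément de Stickelberger, un nombre de Bernoulli généralisé ; l'hypothèse $B_{1,\chi^{-1}} \in \mathbf{Z}_p^{\times}$ autorise précisément son inversion modulo $p^{\nu}$, d'où le $\frac{1}{B_{1,\chi^{-1}}}$ du second membre.

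\textbf{Étape 3 (identités sur $\Gamma_N$).} Il reste à identifier $\sum_m (\cdots) \log \Gamma_N(\frac{m}{p})$ au second membre. Une transformation d'Abel réécrit d'abord celui-ci sous la forme $\frac{-1}{B_{1,\chi^{-1}}} \sum_{a=1}^{N-1} \chi^{-1}(a)\, \log(a!)$, à rapprocher de l'identité $\Gamma_N(a+1) = (-1)^{a+1} a!$. La congruence de type Wilson $\Gamma_N(n) \equiv (-1)^{n}(n-1)! \pmod{N}$ pour $1 \leq n \leq N$, appliquée au représentant $n_m \equiv m p^{-1} \pmod{N}$, donne alors $\log \Gamma_N(\frac{m}{p}) \equiv \sum_{1 \leq k < n_m} \log(k)$ (le signe disparaît car $\log(-1)=0$). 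En intervertissant les sommations sur $m$ et sur $k$ et en utilisant les relations de distribution de $\Gamma_N$, le poids de chaque $\log(k)$ devient la somme partielle de caractère $\sum_{a=1}^{k-1}\chi^{-1}(a)$.

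La principale difficulté me paraît double. D'une part, le contrôle de l'ambiguïté par les unités : un changement $u \mapsto \zeta_p^j u$ modifie déjà le premier membre au niveau $p^{\nu-1}$, de sorte que seule la normalisation canonique fournie par la somme de Gauss via Gross--Koblitz rend le calcul licite modulo $p^{\nu}$. D'autre part, le cœur combinatoire de l'étape 3 : la vérification soigneuse de la réindexation et de l'apparition exacte de la constante $\frac{1}{B_{1,\chi^{-1}}}$.
\end{sketch}
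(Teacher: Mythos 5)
Votre stratégie est pour l'essentiel celle du papier : ramener le membre de gauche à la $\chi$-composante d'une somme de Gauss via la factorisation de Stickelberger (c'est la proposition~\ref{unit_gauss}, où l'ambiguïté par les unités est levée parce que $\mathbf{Z}[\zeta_p]^{\times}(\chi)=0$ pour $\chi$ impair $\neq \omega$, et non par une simple normalisation au niveau des racines de l'unité), passer aux valeurs de $\Gamma_N$ par Gross--Koblitz, puis conclure par des identités sur la fonction Gamma $N$-adique reliées à l'élément de Stickelberger tordu $\phi_{\chi}$ et au facteur $B_{1,\chi^{-1}}$. La seule divergence réelle est votre étape 3 : le papier n'utilise pas la congruence de Wilson $\Gamma_N(n)\equiv(-1)^n (n-1)!$ au représentant $n_m\equiv mp^{-1} \pmod{N}$, mais itère l'équation fonctionnelle pour écrire $\Gamma_N(1-\frac{a}{p})$ modulo $N$ comme un produit sur un système complet de résidus $X$ avec exposants $[\frac{ax}{Np}]+[\frac{a(N-x)}{Np}]+1$, puis identifie le résultat à $\mathcal{L}(\phi_{\chi})$ par une série de lemmes de comptage de points entiers (proposition~\ref{Gamma}, qui occupe l'essentiel de la section 5). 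Votre variante est a priori plus directe, mais l'identité que vous affirmez --- le poids de $\log(k)$ deviendrait $\sum_{a=1}^{k-1}\chi^{-1}(a)$ après interversion des sommations --- est précisément le c\oe ur combinatoire non trivial : avec votre paramétrage ce poids s'écrit d'abord $\sum_{m \,:\, n_m>k}\chi^{-1}(m)$, somme indexée par des résidus modulo $N$ et non modulo $p$, et le passage à la somme partielle de caractère exige exactement le type de manipulation (coefficients de $\phi_{\chi}$ sur $(\mathbf{Z}/Np\mathbf{Z})^{\times}$ calculés dans la proposition~\ref{Stickelberger}, lemme~\ref{rectangle}, contrôle du terme $S_1$) que le papier développe en détail. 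Vous identifiez honnêtement cette difficulté comme le point dur, mais telle quelle votre esquisse la laisse entière ; le reste du plan est correct et conforme à la démonstration du texte.
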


Notons que la quantité de gauche a un sens pour tout caractère $\chi : (\mathbf{Z}/p\mathbf{Z})^{\times} \rightarrow \mathbf{Z}_p^{\times}$ (y compris pour $\chi$ pair). Nous n'avons pas pu trouver de formule explicite si $\chi$ ne vérifie pas les conditions ci-dessus. On a une formule analogue sans supposer que les idéaux au-dessus de $N$ dans $\mathbf{Q}(\zeta_p)$ sont principaux, en utilisant le fait que  $\mathcal{C}_{\mathbf{Q}(\zeta_p)}(\chi)=0$ par le théorème d'Herbrand--Ribet (\textit{cf.} Théorème~\ref{identite_u}).
 
 Les calculs figurant dans \cite[p. $164-165$]{Iimura} permettent de montrer que les membres de gauche des théorèmes~\ref{a+b} et~\ref{P} sont congrus modulo $p$ (lorsque $N = \frac{a^p+b^p}{a+b}$, $u=a+b\cdot \zeta_p$ et $\chi = \omega^{-1}$), si bien que le théorème~\ref{a+b} résulte du théorème~\ref{P} (en utilisant le fait que $B_{1,\omega^{-1}} \equiv \frac{B_2}{2} \equiv \frac{1}{12} \text{ (mod }p\text{)}$).

L'identité du théorème~\ref{P} provient essentiellement d'une conjecture (tordue par $\chi$) de Gross (\cite[Conjecture $7.6$]{Gross_refined}) : le terme de gauche correspond à un régulateur des $N$-unités de $\mathbf{Q}(\zeta_p)$ et le terme de droite correspond à l'image d'un élément de Stickelberger dans $I/I^2$ où $I$ est l'idéal d'augmentation dans $\mathbf{Z}_p[(\mathbf{Z}/N\mathbf{Z})^{\times}]$ (\textit{cf.} la proposition~\ref{Stickelberger}). Gross relie une telle quantité à la dérivée d'une fonction $L$ abélienne en $s=0$. Cependant nous n'avons pas trouvé dans la littérature de preuve d'une version de la conjecture de Gross sur $\mathbf{Q}$ donnant lieu à cette identité. Nous avons donc prouvé directement l'identité du théorème~\ref{P}, en suivant l'idée de la preuve de la conjecture de Gross usuelle (\textit{cf.} \cite{Hayes}), c'est à dire en utilisant la formule de Gross--Koblitz (\textit{cf.} \cite[Theorem $1.7$]{Gross_Koblitz}).

Si $i \in \{1,2,...,p-2\}$, posons :
\begin{align*}
\mathcal{S}_i & \equiv \sum_{k=1}^{N-1} \left(\sum_{a=1}^{k-1} a^i \right) \cdot  \log(k)  \text{ (mod } p\text{)} \\& \equiv  \frac{1}{i+1}\cdot \left( \sum_{k=1}^{N-1} B_{i+1}(k) \cdot  \log(k)  \right) \text{ (mod } p\text{)}
\end{align*}
où $B_{i+1}(X)$ est le $i+1$-ième polynôme de Bernoulli (et $B_{i+1} = B_{i+1}(0)$ est le $i+1$-ième nombre de Bernoulli).
Soit $\mu$ le nombre de $i$ dans $\{1,3,...,p-4\}$ impairs tels que $B_{i+1} \not\equiv 0  \text{ (mod } p\text{)}$ et $\mathcal{S}_i \not\equiv 0 \text{ (mod } p\text{)}$.

On verra (\textit{cf.} lemme~\ref{B_2}) que $\mathcal{S}_1 = \frac{-2}{3} \cdot \sum_{k=1}^{\frac{N-1}{2}} k\cdot  \log(k)  \text{ (mod } p\text{)}$. On pose $\mu_1=1$ si  $\mathcal{S}_1 \equiv 0 \text{ (mod } p\text{)}$ et $\mu_1=0$ sinon.

\begin{thm}\label{class_group}
On a :
$$ 1+ \mu_1 \leq r_K \leq r_{\mathbf{Q}(\zeta_p)} + p-2 - \mu - \emph{Card}\left( \mathbf{Z}[\zeta_p]^{\times}/\emph{N}_{\mathbf{Q}(\zeta_p, N^{\frac{1}{p}})/\mathbf{Q}(\zeta_p)}(\mathbf{Q}(\zeta_p, N^{\frac{1}{p}})^{\times}) \cap \mathbf{Z}[\zeta_p]^{\times} \right)  \text{ .}$$
\end{thm}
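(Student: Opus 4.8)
The plan is to pass to the Galois closure and read off $r_K$ from the cyclic Kummer extension $L/F$, where $F=\mathbf{Q}(\zeta_p)$, $L=\mathbf{Q}(\zeta_p,\alpha)$, $G=\mathrm{Gal}(L/F)\cong\mathbf{Z}/p\mathbf{Z}$ and $\Delta=\mathrm{Gal}(L/K)\cong(\mathbf{Z}/p\mathbf{Z})^{\times}$. Since $[L:K]=p-1$ is prime to $p$, the extension map identifies the $p$-class group $\mathcal{C}_K$ with the $\chi_0$-component $\mathcal{C}_L(\chi_0)=e_{\chi_0}\mathcal{C}_L$, so that $r_K=\dim_{\mathbf{F}_p}\mathcal{C}_L(\chi_0)/p$; this is the conceptual reason why $r_K$ is controlled by the genus theory of $L/F$. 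Concretely I would use the genus formula of Jaulent (in the form exploited by Iimura, Théorème~\ref{Iimura}) applied to $K/\mathbf{Q}$, keeping track of the $\Delta$-action throughout. The finite primes ramified in $L/F$ are the $p-1$ primes $\mathfrak{N}_a=(\sigma_a(u))$ above $N$, forming a single $\Delta$-orbit (ramification at $p$ being either absent or handled separately, and the archimedean places contributing nothing since $F$ is totally complex). Modulo the unique global relation coming from $(N)=\prod_{a}\mathfrak{N}_a$, these primes account for the term $p-2$, while $\mathcal{C}_F$ contributes $r_{\mathbf{Q}(\zeta_p)}$.

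The crux is to evaluate the two remaining contributions explicitly. The classes $[\mathfrak{L}_a]$ of the primes of $L$ above $N$ (with $\mathfrak{L}_a^{p}=\mathfrak{N}_a\mathcal{O}_L$) are ambiguous and generate, together with the image of $\mathcal{C}_F$, the genus part of $\mathcal{C}_L$; any relation among them beyond the evident one is governed by the $N$-unit regulator of $F$. Here Theorem~\ref{P} is decisive: on the eigencomponent $\chi=\omega^{-i}$ with $i$ odd and $B_{i+1}\not\equiv 0 \text{ (mod } p\text{)}$, this regulator is, up to the unit $B_{1,\chi^{-1}}$, congruent to $\mathcal{S}_i$. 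Hence each index $i$ with $\mathcal{S}_i\not\equiv 0$ produces one further relation, so the $[\mathfrak{L}_a]$ span a space of dimension at most $p-2-\mu$. The second contribution is the unit norm-defect $\mathbf{Z}[\zeta_p]^{\times}/\bigl(N_{L/F}(L^{\times})\cap\mathbf{Z}[\zeta_p]^{\times}\bigr)$, an elementary abelian $p$-group that enters Chevalley's ambiguous class number formula in the denominator and is subtracted as the last term. Assembling these yields the upper bound. The step I expect to be genuinely hard is exactly the identification of the $N$-unit regulator (a product of norm-residue symbols at the $\mathfrak{N}_a$, computed through the discrete logarithm $\log$) with the sums $\mathcal{S}_i$: this is non-elementary and rests on Theorem~\ref{P}, hence ultimately on the Gross--Koblitz formula, and it also requires a careful $\Delta$-eigenspace bookkeeping together with control of the behaviour at $p$.

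The lower bound is then immediate. Genus theory already gives $\mathcal{C}_K\neq 0$, i.e. $r_K\geq 1$, as recalled in the introduction, which settles the case $\mu_1=0$. If instead $\mu_1=1$, that is $\mathcal{S}_1\equiv 0 \text{ (mod } p\text{)}$, then by the identity $\mathcal{S}_1=-\tfrac{2}{3}\sum_{k=1}^{(N-1)/2}k\cdot\log(k) \text{ (mod } p\text{)}$ we get $\sum_{k=1}^{(N-1)/2}k\cdot\log(k)\equiv 0 \text{ (mod } p\text{)}$, and Theorem~\ref{CE} yields $r_K\geq 2=1+\mu_1$. In both cases $r_K\geq 1+\mu_1$, which combined with the upper bound completes the proof.
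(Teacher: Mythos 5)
Your toolkit (Jaulent's genus formula, norm‑residue symbols, the Gross--Koblitz identities) is the right one, but the skeleton of your upper bound does not work, because $r_K$ is not controlled by the genus part of $\mathcal{C}_{K_1}$ alone. The paper's proof rests on Iimura's decomposition $r_K=\sum_{i=0}^{p-2}\alpha_i(\chi_0)$ (Proposition~\ref{decomposition}) for the filtration $J_i=(I_S^i\cdot\mathcal{C}_{K_1})\cdot\mathcal{C}_{K_1}^p$, combined with the shift relation $e_{\psi\omega}\cdot\Delta=\Delta\cdot e_{\psi}$ (Fait~\ref{shift}), which gives $\alpha_i(\chi_0)\le\alpha_0(\omega^{-i})$: each layer of the filtration of the \emph{trivial} eigencomponent is bounded by the genus-theoretic invariant of a \emph{different, twisted} eigencomponent. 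Your proposal never introduces this filtration: you bound the span of the ambiguous classes $[\mathfrak{L}_a]$ together with the image of $\mathcal{C}_F$, but that span does not generate $\mathcal{C}_{K_1}(\chi_0)$ in general, so no bound on $r_K$ follows from it. Moreover your mechanism for the term $-\mu$ is inverted. When $\mathcal{S}_i\not\equiv 0$, Proposition~\ref{critere_zero} together with Theorem~\ref{identite_u} (not Theorem~\ref{P}, which assumes the primes above $N$ are principal --- a hypothesis absent from Theorem~\ref{class_group}) shows that the ambiguous class $\gamma=\prod_{t}c(t(\mathfrak{P}))^{\omega^i(t)}$ has \emph{nonzero} image in $I_0(\omega^{-i})$; no new relation among the $[\mathfrak{L}_a]$ appears. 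The actual point is that by Proposition~\ref{kernel} this image lies in the kernel $U_0(\omega^{-i})$ of $I_0(\omega^{-i})\to I_1(\omega^{-i+1})$, and since $I_0(\omega^{-i})$ is one-dimensional under the hypotheses on $i$, the map is then zero, so $I_1(\omega^{-i+1})=0$ and hence $\alpha_i(\chi_0)=0$: it is the collapse of the next filtration layer, not a relation among prime classes, that subtracts $1$ for each of the $\mu$ indices. (Also, "ramification at $p$ handled separately" is precisely Lemma~\ref{j(w)}, $j(\omega)=0$, which is what kills the $\omega$-component and yields $p-2$ rather than $p-1$.)

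For the lower bound, your appeal to Theorem~\ref{CE} is not admissible here: in this paper Theorem~\ref{CE} is a \emph{consequence} of Theorem~\ref{class_group} ("En particulier on retrouve le théorème~\ref{CE}"), and the stated purpose is to reprove it without modular forms, so quoting it makes the argument circular relative to the paper's logic (and, even taken as an external black box, it proves less than the paper does). The paper's own route is internal: $\alpha_0(\chi_0)=1$ by genus theory, and $\alpha_1(\chi_0)=\mu_1$ exactly, via the equality case of Proposition~\ref{kernel} for $i=1$, Proposition~\ref{critere_zero} and Theorem~\ref{identite_garett}, whence $r_K\ge\alpha_0(\chi_0)+\alpha_1(\chi_0)=1+\mu_1$ with no input from Calegari--Emerton.
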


En particulier on retrouve le théorème~\ref{CE}. Comme nous le verrons dans la preuve, on a une description plus fine mais plus compliquée de $r_K$ et $r_{K_0}(\omega^{-1})$, et nous expliquerons quelles sont les difficultés pour améliorer les bornes explicitement.

Pour trouver des couples $(p,N)$ avec $r_K\geq 2$ mais $\sum_{k=1}^{\frac{N-1}{2}} k\cdot  \log(k) \not\equiv 0  \text{ (mod } p\text{)}$, il est judicieux au vu du théorème~\ref{class_group} de chercher l'annulation des sommes $S_i$ pour $i$ dans $\{1,3,...,p-4\}$ impair tel que $B_{i+1} \not\equiv 0  \text{ (mod } p\text{)}$. Par exemple pour $p=7$ et $N=337, 631, 659, 1303, 1723$, on a $S_1 \neq 0$ mais $S_3 = 0$, et dans ces cas on vérifie en utilisant SAGE (sous GRH) que $r_K=2$. 

Pour $p=5$ nous conjecturons, comme Calegari--Emerton, que la réciproque du théorème~\ref{CE} est vraie. 

\begin{thm}\label{p=5}
Supposons que $p=5$ (dans ce cas $\mathbf{Q}(\zeta_p)$ est principal et $1+\zeta_p$ est une unité fondamentale de $\mathbf{Z}[\zeta_p]$). Soit $u \in \mathbf{Z}[\zeta_p]$ tel que $(u)$ est un idéal premier au-dessus de $N$ dans $\mathbf{Q}(\zeta_p)$.
On a 
$$ 1\leq r_K \leq 3 \text{ .}$$
(i) Si $\sum_{k=1}^{\frac{N-1}{2}} k\cdot log(k) \equiv 0  \text{ (mod } p \text{)} $, alors $r_K \geq 2$, et dans ce cas, si $r_K \geq 3$, alors 
$$\sum_{a=1}^{4} a^2\cdot \emph{log}(1+\zeta_p^a) \equiv 0 \text{ (mod } p \text{)} $$
et
$$\sum_{a \in \mathbf{F}_p^{\times} \text{, } a \not\equiv 1} (a^2-1)\cdot \emph{log}(\sigma_a(u)) \equiv 0 \text{ (mod } p \text{)} $$
(la deuxième quantité est indépendante du choix de $u$ sous l'hypothèse que la première quantité est nulle) \\
(ii) Si $\sum_{k=1}^{\frac{N-1}{2}} k\cdot log(k) \not\equiv 0 \text{ (mod } p \text{)}$ et que $r_K \geq 2$ alors 
$$\sum_{a=1}^{4} a^2\cdot \emph{log}(1+\zeta_p^a) \equiv 0 \text{ (mod } p \text{)} $$
et
$$\sum_{a \in \mathbf{F}_p^{\times} \text{, } a \not\equiv 1} (a^2-1)\cdot \emph{log}(\sigma_a(u)) \equiv 0 \text{ (mod } p \text{)} \text{ .}$$

\end{thm}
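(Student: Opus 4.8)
Le plan est de tout déduire du théorème~\ref{class_group} spécialisé en $p=5$. Comme $\mathbf{Q}(\zeta_5)$ est principal, on a $r_{\mathbf{Q}(\zeta_5)}=0$ et $p-2=3$. L'ensemble $\{1,3,\dots,p-4\}$ se réduit au singleton $\{1\}$, et puisque $B_2=\frac{1}{6}\not\equiv 0 \pmod 5$, l'entier $\mu$ vaut $1$ lorsque $\mathcal{S}_1\not\equiv 0 \pmod 5$ et $0$ sinon, tandis que $\mu_1$ est l'indicateur opposé. Par le lemme~\ref{B_2} et comme $-\frac{2}{3}$ est inversible modulo $5$, la condition $\mathcal{S}_1\equiv 0 \pmod 5$ équivaut à $\sum_{k=1}^{\frac{N-1}{2}}k\cdot\log(k)\equiv 0 \pmod 5$. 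En notant $C\geq 0$ le terme $\mathrm{Card}(\cdots)$ du théorème~\ref{class_group} --- dimension sur $\mathbf{F}_5$ d'un quotient du groupe des unités $\mathbf{Z}[\zeta_5]^{\times}$, donc $C\in\{0,1,2\}$ ---, on obtient
\begin{equation*}
1+\mu_1\leq r_K\leq 3-\mu-C,
\end{equation*}
d'où aussitôt $1\leq r_K\leq 3$, ainsi que la minoration $r_K\geq 2$ dans le cas (i), où $\mu_1=1$.

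Je montrerais ensuite que, sous l'hypothèse supplémentaire $r_K\geq 3$ du cas (i) ou sous l'hypothèse $r_K\geq 2$ du cas (ii), le terme $C$ s'annule. Dans le cas (i), où $\mu=0$, la majoration $r_K\leq 3-C$ jointe à $r_K\geq 3$ force $C=0$ ; dans le cas (ii), où $\mu=1$, la majoration $r_K\leq 2-C$ jointe à $r_K\geq 2$ force de même $C=0$. Or $C=0$ signifie que toute unité de $\mathbf{Z}[\zeta_5]$ est une norme de l'extension $\mathbf{Q}(\zeta_5,\alpha)/\mathbf{Q}(\zeta_5)$ (on utilise ici l'inclusion $(\mathbf{Z}[\zeta_5]^{\times})^{p}\subseteq \mathrm{N}_{\mathbf{Q}(\zeta_5,\alpha)/\mathbf{Q}(\zeta_5)}(\mathbf{Q}(\zeta_5,\alpha)^{\times})$ qui fait du quotient un $\mathbf{F}_5$-espace vectoriel) ; en particulier l'unité fondamentale $1+\zeta_5$ en est une norme globale.

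Comme $\mathbf{Q}(\zeta_5,\alpha)/\mathbf{Q}(\zeta_5)$ est cyclique de degré $p$, le théorème des normes de Hasse ramène « être une norme globale » à « être une norme locale en chaque place ». Les seules places ramifiées sont celles au-dessus de $N$ (il faudra contrôler la contribution des places au-dessus de $p$). En une place $\mathfrak{n}\mid N$, le complété de $\mathbf{Q}(\zeta_5)$ est $\mathbf{Q}_N$ et l'extension locale $\mathbf{Q}_N(\alpha)/\mathbf{Q}_N$ est totalement et modérément ramifiée de degré $p$ ; la théorie locale du corps de classes montre alors qu'une unité $\varepsilon$ y est une norme si et seulement si sa réduction dans $(\mathbf{Z}[\zeta_5]/\mathfrak{n})^{\times}\cong\mathbf{F}_N^{\times}$ est une puissance $p$-ième, c'est-à-dire si et seulement si le logarithme de cette réduction est nul modulo $p$. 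Appliquée à $\varepsilon=1+\zeta_5$ en les places $(\sigma_a(u))$, et en transportant chaque réduction modulo $(\sigma_a(u))$ en une réduction modulo $(u)$ par l'automorphisme $\sigma_a^{-1}$ (la propriété d'être une puissance $p$-ième étant intrinsèque), cette condition devient $\log(1+\zeta_5^{b})\equiv 0 \pmod p$ pour tout $b\in\mathbf{F}_5^{\times}$ ; en particulier la combinaison pondérée $\sum_{a=1}^{4}a^2\cdot\log(1+\zeta_5^{a})$ est nulle modulo $p$, ce qui est la première congruence. L'indépendance annoncée du choix de $u$ se vérifie alors directement : remplacer $u$ par $\eta u$ (avec $\eta\in\mathbf{Z}[\zeta_5]^{\times}$) modifie $\sum_{a\neq 1}(a^2-1)\log(\sigma_a(u))$ de $\sum_{a}(a^2-1)\log(\sigma_a(\eta))$, quantité nulle pour $\eta$ racine de l'unité (car $\sum_{a}(a^3-a)\equiv 0 \pmod 5$) et égale à la première quantité pour $\eta=1+\zeta_5$ (car $\prod_{a}(1+\zeta_5^{a})=\Phi_5(-1)=1$, donc $\sum_{a}\log(1+\zeta_5^{a})=0$).

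La seconde congruence, portant sur les $N$-unités $\sigma_a(u)$, demandera d'exploiter la description plus fine de $r_K$ issue de la preuve du théorème~\ref{class_group} : celle-ci exprime $r_K$ au moyen d'un accouplement de type régulateur (à la Gross) entre unités et $N$-unités de $\mathbf{Q}(\zeta_5)$, évalué par le logarithme en $N$, et la maximalité de $r_K$ force la dégénérescence de cet accouplement dans la composante isotypique attachée au caractère $\omega^2$ --- la première congruence en traduisant le côté « unités » et la seconde le côté « $N$-unités ». C'est ce dernier point qui constituera le principal obstacle : contrairement à la situation du théorème~\ref{P}, le caractère $\omega^2$ est pair, de sorte qu'aucune formule explicite n'est disponible et que l'on ne peut conclure qu'à l'annulation de $\sum_{a\neq 1}(a^2-1)\log(\sigma_a(u))$ sans l'évaluer. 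Il restera enfin à vérifier soigneusement que les places au-dessus de $p$ n'imposent pas de condition parasite et que l'action galoisienne recombine correctement les conditions locales en les sommes pondérées voulues.
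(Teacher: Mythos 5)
Your skeleton works for part of the statement: reading Théorème~\ref{class_group} at $p=5$ does give $1\le r_K\le 3$ and the lower bound $r_K\ge 2$ in case (i), and your route to the first congruence (maximality of $r_K$ forces the unit-norm obstruction to vanish, then Hasse plus local class field theory at the tamely totally ramified places above $N$ turns ``$1+\zeta_5$ est une norme'' into the vanishing of the logarithms) is essentially the paper's own argument, which runs through $n(\omega^2)=0$ in la proposition~\ref{genus} and the Hilbert-symbol computation of la proposition~\ref{critere_zero}. One caveat: your $C$ must be the $\mathbf{F}_5$-rank of the $\omega^2$-isotypic part of the unit quotient only, not of the whole quotient. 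The $\omega$-part, generated by $\zeta_5$, is nontrivial whenever $\nu=1$ (lemme~\ref{j(w)}), so with your reading $C\ge 1$ for $N=211$ and the upper bound $3-\mu-C$ would contradict $r_K=3$ there; the $\omega$-contribution is already absorbed in the ``$p-2$'' of the bound. Your verification that the second sum is independent of the choice of $u$ is correct.

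The genuine gap is the second congruence, which you yourself flag as ``le principal obstacle'' and do not prove. It cannot be extracted from Théorème~\ref{class_group} used as a black box: that statement carries no information about the $N$-units $\sigma_a(u)$, since the term $\mu$ only sees the odd characters $\omega^{-i}$ with $B_{i+1}\not\equiv 0$, i.e.\ only $i=1$ when $p=5$. What is needed is the finer decomposition $r_K=\sum_{i=0}^{3}\alpha_i(\chi_0)$ with $\alpha_3(\chi_0)=0$: the hypothesis $r_K\ge 3$ in case (i) (resp.\ $r_K\ge 2$ in case (ii)) forces $\alpha_2(\chi_0)=1$, hence $I_1(\omega^{3})\ne 0$, hence the kernel $U_0(\omega^{2})$ of $I_0(\omega^{2})\to I_1(\omega^{3})$ must vanish. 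La proposition~\ref{kernel} applied with $i=2$ identifies $U_0(\omega^{2})$ with $\varphi(\mathcal{C}_1(\omega^{2}))$, and the proof of la proposition~\ref{critere_zero} adapts to the even character $\omega^{2}$ precisely because the first congruence gives $\hat{S}(\omega^{2})=0$; the resulting criterion for $\varphi(\mathcal{C}_1(\omega^{2}))=0$ is exactly $\sum_{a\ne 1}(a^2-1)\cdot\log(\sigma_a(u))\equiv 0$. Note the logical order this imposes: the first congruence is not a parallel conclusion obtained from a separate ``côté unités'' of some regulator pairing, it is the hypothesis that makes the second criterion well defined (and independent of $u$). No appeal to an unproved Gross-type formula is needed --- and indeed none is available for the even character, which is why the theorem only asserts the vanishing without evaluating the sum. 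As written, your proposal establishes the bounds, the case-(i) lower bound and the first congruence, but not the second.
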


Nous conjecturons que le cas $(ii)$ ($r_K>1$ et $\sum_{k=1}^{\frac{N-1}{2}} k\cdot log(k) \not\equiv 0 \text{ (mod } p \text{)}$) ne se produit jamais. Nous conjecturons que dans le cas $(i)$ où $\sum_{k=1}^{\frac{N-1}{2}} k\cdot log(k) \equiv 0  \text{ (mod } p \text{)} $, le fait que $r_K = 3$ est équivalent à  $$\sum_{a=1}^{4}a^2 \cdot \emph{log}(1+\zeta_p^a) \equiv 0 \text{ (mod } p \text{)} $$ 
et $$\sum_{a \in \mathbf{F}_p^{\times} \text{, } a \not\equiv 1} (a^2-1)\cdot \emph{log}(\sigma_a(u)) \equiv 0 \text{ (mod } p \text{)} \text{ .}$$
Nous avons vérifié cette conjecture numériquement avec SAGE, sous GRH, pour $N < 5000$ (le plus petit exemple pour lequel $r_K=3$ est $N=211$). Il serait intéressant d'avoir une formule plus explicite pour ces deux sommes de logarithmes (comme dans le théorème~\ref{P}).

Pour $p=3$  et $N \equiv 1  \text{ (mod } 9 \text{)}$ , le groupe $\mathcal{C}_{K}$ a été étudié par les mêmes méthodes dans \cite{Gerth} et répond à une conjecture de Calegari et Emerton \cite[$p.141$]{CE}.

Pour $p=2$ et $N \equiv 9  \text{ (mod } 16 \text{)}$, Calegari et Emerton (\cite[Theorem $1.1$]{CE}) ont relié le cardinal de $\mathcal{C}_{\mathbf{Q}(\sqrt{-N})}$ au rang de l'algèbre de Hecke mentionnée ci-dessus, mais nous ne connaissons pas de bornes pour le $2$-rang du $2$-sylow de $\mathcal{C}_{\mathbf{Q}(\sqrt{-N})}$ (qui est $\neq 0$ par la théorie du genre de Gauss).

Dans la deuxième partie de cet article, nous reprenons les méthodes d'Iimura pour étudier l'entier $r_K$, et donnons des bornes en termes de symboles de Hilbert. Dans la troisième partie, nous explicitons ces symboles de Hilbert en termes du logarithme discret d'une certaine $N$-unité (qui est liée à une somme de Gauss). Dans la quatrième partie, nous étudions l'entier $r_{K_0}$ avec les même méthodes que précédemment, et nous faisons le lien avec \cite{Schoof}. Dans la cinquième partie, nous faisons le lien entre la fonction Gamma $N$-adique et certains éléments de Stickelberger. Dans la sixième partie, nous finissons la démonstration des théorèmes énoncés plus hauts grâce à la formule de Gross--Koblitz.

Je voudrais remercier Loïc Merel, mon directeur de thèse, pour avoir guidé ce travail et pour ses conseils de rédaction. Il a attiré mon attention sur la quantité $\sum_{k=1}^{\frac{N-1}{2}} k \cdot \log(k)$, et a remarqué l'analogie avec une valeur de dérivée de fonction $L$. Il a aussi remarqué que certaines identités de ce papier ressemblent à des formules de nombre de classe, ce qui m'a mené à la conjecture de Gross. Je souhaite aussi remercier Henri Cohen qui m'a souligné le lien entre les travaux de René Schoof sur les corps cyclotomiques et la quantité $\sum_{k=1}^{\frac{N-1}{2}} k \cdot \log(k)$. Enfin je tiens à remercier René Schoof pour m'avoir envoyé l'article en question. 
\newpage
\section{Rappels des résultats d'Iimura et Jaulent (et compléments)}
Cette partie est largement due à Iimura, mais pour le confort du lecteur nous reprenons ses méthodes (et ses notations autant que possible) dans notre contexte, sans constamment renvoyer à l'article original (\cite{Iimura}). 

Soit $K_1 = \mathbf{Q}(\zeta_p, \alpha)$ la clôture galoisienne de $K$ dans $\overline{\mathbf{Q}}$. Soient $G = Gal(K_1/\mathbf{Q})$, $S=\text{Gal}(K_1/\mathbf{Q}(\zeta_p))$ et $T = \text{Gal}(K_1/K)$. On identifie $T$ à $\text{Gal}(\mathbf{Q}(\zeta_p)/\mathbf{Q})$ par restriction, ainsi qu'à $\mathbf{F}_p^{\times}$ via l'isomorphisme $\mathbf{F}_p^{\times} \rightarrow \text{Gal}(\mathbf{Q}(\zeta_p)/\mathbf{Q})$ donné par $a \mapsto \sigma_a$ (rappelons que $\sigma_a(\zeta_p) = \zeta_p^a$). Ainsi un caractère $\chi : (\mathbf{Z}/p\mathbf{Z})^{\times} \rightarrow \mathbf{Z}_p^{\times}$ s'identifie à un caractère de $T$. Fixons dans ce qui suit un générateur $\sigma$ de $S$. Rappelons que $\omega$ dénote le caractère de Teichmüller.

Le groupe $\mathcal{C}_{K_1}$ est un $\mathbf{Z}_p[G]$-module à gauche. 
Si $0 \leq i \leq p-2$ est un entier, on note $J_i = \left( I_S^i\cdot \mathcal{C}_{K_1}\right)\cdot \mathcal{C}_{K_1}^p$ (où $I_S$ est l'idéal d'augmentation de $\mathbf{Z}_p[S]$) et on pose $I_i = J_i/J_{i+1}$, qui est un $\mathbf{F}_p[G]$-module. Si $\chi : T \rightarrow \mathbf{Z}_p^{\times}$ est un caractère, notons $\alpha_i(\chi)$ le $\mathbf{F}_p$-rang de $e_{\chi}(I_i)$.

Comme le degré $[K_1 : K] = p-1$ est premier à $p$, la flèche naturelle $\mathcal{C}_{K} \rightarrow \mathcal{C}_{K_1}$ est injective. Rappelons que $\chi_0$ est le caractère trivial de $T$.
Le fait suivant est la clé pour l'étude de $r_K$.
\begin{prop}\label{decomposition}\cite[Lemma $1.1$]{Iimura}
On a $\mathcal{C}_{K} = \mathcal{C}_{K_1}(\chi_0)$. On a $r_K = \sum_{i=0}^{p-2} \alpha_i(\chi_0)$.
\end{prop}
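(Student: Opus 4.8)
The plan is to establish the two assertions separately. For the first, $\mathcal{C}_K=\mathcal{C}_{K_1}(\chi_0)$, I would run the usual descent along $K_1/K$, whose degree $p-1$ is prime to $p$. Writing $\iota$ for the natural map $\mathcal{C}_K\to\mathcal{C}_{K_1}$ and $\mathrm{N}$ for the norm, one has $\mathrm{N}\circ\iota=p-1$ on $\mathcal{C}_K$, which is invertible on this $p$-group, so $\iota$ is injective (as already recorded above) with image inside $(\mathcal{C}_{K_1})^T$. Conversely, if $c\in(\mathcal{C}_{K_1})^T$ then $\iota(\mathrm{N}(c))=\bigl(\sum_{t\in T}[t]\bigr)c=(p-1)c$, so $c\in\mathrm{im}(\iota)$; hence $\iota$ identifies $\mathcal{C}_K$ with $(\mathcal{C}_{K_1})^T$. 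Since $|T|=p-1$ is invertible in $\mathbf{Z}_p$, the idempotent $e_{\chi_0}=\frac{1}{p-1}\sum_{t\in T}[t]$ is precisely the projector onto the $T$-invariants, so $(\mathcal{C}_{K_1})^T=e_{\chi_0}\mathcal{C}_{K_1}=\mathcal{C}_{K_1}(\chi_0)$, giving the claim.

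For the rank formula I would work with $\overline M:=\mathcal{C}_{K_1}/p\,\mathcal{C}_{K_1}$. As $e_{\chi_0}$ is exact (multiplication by a central idempotent of $\mathbf{Z}_p[T]$) and commutes with multiplication by $p$, the first assertion gives $r_K=\dim_{\mathbf{F}_p}\mathcal{C}_K/p\,\mathcal{C}_K=\dim_{\mathbf{F}_p}e_{\chi_0}\overline M$. Next I would identify the layers of the filtration: with $\tau:=\sigma-1$ one has $J_i=I_S^i\,\mathcal{C}_{K_1}+p\,\mathcal{C}_{K_1}$, whence $J_i/p\,\mathcal{C}_{K_1}=\tau^i\overline M$ and $I_i=J_i/J_{i+1}\cong\tau^i\overline M/\tau^{i+1}\overline M$, the $i$-th graded piece of the $I_S$-adic filtration of $\overline M$. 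Because $\mathbf{F}_p[S]=\mathbf{F}_p[\tau]/(\tau^p)$ (here $\tau^p=(\sigma-1)^p=\sigma^p-1=0$), this filtration reads $\overline M\supseteq\tau\overline M\supseteq\cdots\supseteq\tau^{p-1}\overline M\supseteq\tau^p\overline M=0$, with graded pieces $I_0,\dots,I_{p-1}$, and it is $T$-stable since $T$ normalizes $S$. Applying the exact functor $e_{\chi_0}$ to it gives $\dim_{\mathbf{F}_p}e_{\chi_0}\overline M=\sum_{i=0}^{p-1}\alpha_i(\chi_0)$.

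The crux is then to show that the top layer does not contribute, i.e. $\alpha_{p-1}(\chi_0)=0$. Here I would use the identity $\tau^{p-1}=\sum_{j=0}^{p-1}\sigma^j=:N_S$ in $\mathbf{F}_p[S]$, which follows from $\sum_{j=0}^{p-1}\binom{j}{k}=\binom{p}{k+1}\equiv 0$ for $0\le k\le p-2$ and $=1$ for $k=p-1$; thus $I_{p-1}=\tau^{p-1}\overline M=N_S\overline M$. Since $G=ST$ with $S$ normal, one has $N_G:=\sum_{g\in G}[g]=\bigl(\sum_{t\in T}[t]\bigr)N_S=(p-1)\,e_{\chi_0}N_S$, so $e_{\chi_0}N_S=\frac{1}{p-1}N_G$ and hence $e_{\chi_0}(I_{p-1})=\frac{1}{p-1}N_G\overline M$. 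Finally $N_G$ kills $\mathcal{C}_{K_1}$: for an ideal class $[\mathfrak a]$ one has $N_G[\mathfrak a]=\bigl[\mathrm{N}_{K_1/\mathbf{Q}}(\mathfrak a)\,\mathcal{O}_{K_1}\bigr]$, and $\mathrm{N}_{K_1/\mathbf{Q}}(\mathfrak a)$ is an ideal of $\mathbf{Z}$, hence principal, so this class is trivial. Therefore $e_{\chi_0}(I_{p-1})=0$, the top term drops out, and $r_K=\dim_{\mathbf{F}_p}e_{\chi_0}\overline M=\sum_{i=0}^{p-2}\alpha_i(\chi_0)$. The only point demanding care is that $e_{\chi_0}\in\mathbf{Z}_p[T]$ does not commute with the ideal $I_S$ inside $\mathbf{Z}_p[G]$; this is harmless because $T$ normalizes $S$, so every filtration step is a $T$-submodule and $e_{\chi_0}$ stays exact on the associated $\mathbf{F}_p[T]$-module filtration.
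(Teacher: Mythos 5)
Your proof is correct and follows essentially the same route as the paper: the first assertion by descent along the prime-to-$p$ extension $K_1/K$ (identifying $\mathcal{C}_K$ with the $T$-invariants, i.e.\ the image of $e_{\chi_0}$), and the second by showing that the top layer of the $I_S$-adic filtration dies under $e_{\chi_0}$ because the full norm $\sum_{g\in G}[g]$ annihilates $\mathcal{C}_{K_1}$ (triviality of the class group of $\mathbf{Q}$). The only difference is presentational: where the paper invokes Jaulent's Lemme~\ref{delta}, namely $p\in(([\sigma]-1)^{p-1}-\delta)\cdot\mathbf{Z}_p[S]^{\times}$, to obtain $\mathcal{C}_K^p=e_{\chi_0}(I_S^{p-1}\cdot\mathcal{C}_{K_1})$, you prove its mod-$p$ shadow $([\sigma]-1)^{p-1}\equiv\delta$ in $\mathbf{F}_p[S]$ directly via the hockey-stick identity, which indeed suffices for the rank count.
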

On pose $$\delta = \sum_{i=0}^{p-1} [\sigma^i] \text{ .}$$

\begin{lem}\label{delta}\cite[Lemme $1$]{Jaulent}
On a $$p \in (([\sigma]-1)^{p-1} - \delta) \cdot  \mathbf{Z}_p[S]^{\times} \text{ .}$$
\end{lem}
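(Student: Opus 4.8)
Le plan est d'exploiter le fait que $S$ est cyclique d'ordre $p$ (puisque $[K_1 : \mathbf{Q}(\zeta_p)] = p$), de sorte que, en posant $x = [\sigma]$, on a un isomorphisme $\mathbf{Z}_p[S] \cong \mathbf{Z}_p[x]/(x^p-1)$. Le point structurel essentiel est que cet anneau est \emph{local} : comme $S$ est un $p$-groupe, l'idéal $\mathfrak{m} = p\cdot \mathbf{Z}_p[S] + I_S$ est l'unique idéal maximal de $\mathbf{Z}_p[S]$ et son corps résiduel est $\mathbf{F}_p$. Par conséquent un élément de $\mathbf{Z}_p[S]$ est inversible si et seulement si son augmentation (l'image par le morphisme $x \mapsto 1$) appartient à $\mathbf{Z}_p^{\times}$. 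Ce critère d'inversibilité par l'augmentation est l'outil qui rendra la conclusion immédiate.

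Je commencerais par exprimer $\delta$ en fonction de $t := [\sigma] - 1 \in I_S$. Comme $\delta = \sum_{i=0}^{p-1} x^i = \frac{x^p-1}{x-1}$ et $x = 1+t$, la formule du binôme donne $x^p - 1 = (1+t)^p - 1 = \sum_{k=1}^{p} \binom{p}{k} t^k$, d'où
$$\delta = \frac{(1+t)^p - 1}{t} = \sum_{k=1}^{p} \binom{p}{k} t^{k-1} = t^{p-1} + \sum_{k=1}^{p-1} \binom{p}{k} t^{k-1} \text{ .}$$
En soustrayant $t^{p-1} = ([\sigma]-1)^{p-1}$, on obtient
$$([\sigma]-1)^{p-1} - \delta = -\sum_{k=1}^{p-1} \binom{p}{k}\, t^{k-1} \text{ .}$$

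L'étape clé consiste alors à factoriser $p$. Pour $1 \leq k \leq p-1$ le coefficient binomial $\binom{p}{k}$ est divisible par $p$, si bien que
$$([\sigma]-1)^{p-1} - \delta = -p\cdot v \text{, } \qquad \text{où } v = \sum_{k=1}^{p-1} \tfrac{1}{p}\binom{p}{k}\, t^{k-1} \in \mathbf{Z}_p[S] \text{ .}$$
Le terme constant de $v$ (c'est-à-dire son augmentation, obtenue en posant $t = 0$) vaut $\frac{1}{p}\binom{p}{1} = 1$, tous les autres termes étant dans $I_S$. L'augmentation de $v$ est donc $1 \in \mathbf{Z}_p^{\times}$, et d'après le caractère local de $\mathbf{Z}_p[S]$ rappelé plus haut, $v$ est inversible. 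On conclut que
$$p = \bigl(([\sigma]-1)^{p-1} - \delta\bigr)\cdot (-v^{-1}) \text{, } \qquad -v^{-1} \in \mathbf{Z}_p[S]^{\times} \text{ ,}$$
ce qui est exactement l'énoncé. Le seul obstacle véritable n'est pas le calcul binomial, qui est routinier, mais la localité de $\mathbf{Z}_p[S]$ pour un $p$-groupe $S$ : c'est elle qui garantit que l'augmentation suffit à tester l'inversibilité, et donc que le facteur $v$ est bien une unité.
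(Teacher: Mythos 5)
Votre preuve est correcte. Notez d'abord que l'article ne démontre pas ce lemme : il le cite tel quel (\cite[Lemme 1]{Jaulent}), si bien qu'il n'y a pas de preuve interne à laquelle comparer la vôtre ; votre argument fournit donc une démonstration autonome d'un énoncé que l'auteur emprunte à la littérature. Le calcul binomial est juste : l'identité $\sum_{i=0}^{p-1} x^i = \sum_{k=1}^{p}\binom{p}{k}(x-1)^{k-1}$ se vérifie dans l'anneau de polynômes $\mathbf{Z}_p[x]$ (intègre), en multipliant les deux membres par $x-1$, puis se transporte au quotient $\mathbf{Z}_p[S]=\mathbf{Z}_p[x]/(x^p-1)$ ; c'est la bonne façon de justifier votre écriture $\delta = \frac{(1+t)^p-1}{t}$, qui prise au pied de la lettre dans l'anneau de groupe serait illicite puisque $t=[\sigma]-1$ y est un diviseur de zéro ($t\cdot\delta=0$). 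La factorisation $([\sigma]-1)^{p-1}-\delta=-p\,v$ avec $v$ d'augmentation $1$ est exacte, et l'invocation de la localité de $\mathbf{Z}_p[S]$ (pour $S$ un $p$-groupe, l'idéal $(p, I_S)$ est l'unique idéal maximal) est le bon critère d'inversibilité. Si vous vouliez éviter ce fait abstrait, vous pourriez conclure plus directement : modulo $p$ on a $t^p\equiv (1+t)^p-1\equiv 0$, donc $v=1+w$ avec $w\in I_S$ topologiquement nilpotent pour la topologie $p$-adique, et $v^{-1}=\sum_{n\geq 0}(-w)^n$ converge dans $\mathbf{Z}_p[S]$ ; mais votre rédaction est parfaitement valable telle quelle.
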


Prouvons la proposition~\ref{decomposition}. 
On a clairement $\mathcal{C}_K \subset \mathcal{C}_{K_1}(\chi_0)$. Réciproquement, si une classe d'idéaux $\mathfrak{c}$ est fixée par $T$, on a $N_{K_1/K}(\mathfrak{c})=\mathfrak{c}^{p-1} \in \mathcal{C}_K$ donc $\mathfrak{c} \in \mathcal{C}_K$. Par le lemme~\ref{delta}, $\mathcal{C}_{K}^p=e_{\chi_0}\cdot(I_{S}^{p-1}\cdot \mathcal{C}_{K_1})$ (on utilise le fait que $\delta \cdot \mathcal{C}_K = N_{K/\mathbf{Q}} (\mathcal{C}_K) = 1$ car $\mathbf{Q}$ a un groupe des classes trivial).

Il s'agit donc de comprendre les $\alpha_i(\chi_0)$. 

Notons que si $\tau \in T$, alors 
$$\tau \sigma \tau^{-1} = \sigma^{\omega(\tau)} \text{ .}$$
On pose, en suivant par exemple Jaulent dans \cite{Jaulent}, $$\Delta = \frac{1}{p-1}\cdot \left(\sum_{\tau \in T} \omega(\tau^{-1})\cdot [\sigma^{\omega(\tau)}]\right) \in \mathbf{Z}_p[S] \text{ .}$$

\begin{fact}\label{shift}\cite[Proposition $2$]{Jaulent}
L'élément $\Delta$ de l'algèbre de groupe $\mathbf{Z}_p[T]$ est un générateur de l'idéal d'augmentation. De plus on a la relation suivante, pour tout caractère $\psi : T \rightarrow \mathbf{Z}_p^{\times}$ :
$$ e_{\psi\cdot \omega} \cdot \Delta = \Delta \cdot e_{\psi}$$
où on rappelle que $e_{\psi} = \frac{1}{p-1}\cdot \left(\sum_{\tau \in T} \psi(\tau^{-1})\cdot [\tau] \right) \in \mathbf{Z}_p[T]$.
\end{fact}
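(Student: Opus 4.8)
The statement splits into two independent claims: that $\Delta$ generates the augmentation ideal $I_S$ of $\mathbf{Z}_p[S]$, and the intertwining identity $e_{\psi\cdot\omega}\cdot\Delta=\Delta\cdot e_\psi$. The plan is to prove them separately.

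\emph{Generation of $I_S$.} First I would put $\Delta$ into a usable shape. Since $\omega(\tau)\equiv\tau\pmod p$ and $\sigma$ has order $p$, as $\tau$ runs over $T\cong\mathbf{F}_p^\times$ the exponent $\omega(\tau)$ runs bijectively over $\mathbf{F}_p^\times$, so that $\Delta=\frac{1}{p-1}\sum_{a\in\mathbf{F}_p^\times}\omega(a)^{-1}\,[\sigma]^a$. Writing $y=[\sigma]-1$, so that $I_S=(y)$, I would expand $[\sigma]^a=(1+y)^a$ by the binomial theorem. The constant term $\frac{1}{p-1}\sum_{a}\omega(a)^{-1}$ vanishes because $\omega^{-1}$ is a nontrivial character of $\mathbf{F}_p^\times$, so $\Delta\in I_S$; and the coefficient of $y$ equals $\frac{1}{p-1}\sum_{a}a\,\omega(a)^{-1}$, which is $\equiv 1\pmod p$ since $a\,\omega(a)^{-1}\equiv 1\pmod p$ for every $a$. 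Hence $\Delta=y\cdot w$ with $w$ having unit constant term.

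\emph{The cofactor is a unit.} To conclude that $w$ is invertible I would note that $\mathbf{Z}_p[S]$ is local: its reduction $\mathbf{F}_p[S]=\mathbf{F}_p[y]/(y^p)$ is local with residue field $\mathbf{F}_p$, and every maximal ideal of the finite $\mathbf{Z}_p$-algebra $\mathbf{Z}_p[S]$ contains $p$, so $\mathbf{Z}_p[S]$ has the single maximal ideal $(p,y)$. The image of $w$ in $\mathbf{F}_p$ is nonzero, so $w\in\mathbf{Z}_p[S]^\times$ and $(\Delta)=(y)=I_S$. For the second claim I would compute $\Delta\cdot e_\psi$ directly inside $\mathbf{Z}_p[G]$. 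The conjugation relation $\tau\sigma\tau^{-1}=\sigma^{\omega(\tau)}$ gives $[\sigma^{m}]\,[\tau]=[\tau]\,[\sigma^{m\,\omega(\tau)^{-1}}]$. Substituting the definitions of $\Delta$ and $e_\psi$ produces a double sum over $T\times T$; after moving each $[\sigma]$-factor to the right of the corresponding $[\tau]$ and reindexing by $\rho=\tau_0\tau^{-1}$ (using $\omega(\tau_0^{-1})=\omega(\tau^{-1})\,\omega(\rho^{-1})$), the summand factors as the product of a sum over $\tau$ weighted by $(\psi\omega)(\tau^{-1})$ and a sum over $\rho$ weighted by $\omega(\rho^{-1})$. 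I would then recognize these two factors as $e_{\psi\omega}$ and $\Delta$ respectively, giving $\Delta\cdot e_\psi=e_{\psi\omega}\cdot\Delta$.

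The computations are elementary and the only delicate point is bookkeeping. The main obstacle I anticipate is keeping the noncommutation of $[\sigma]$ and $[\tau]$ and the Teichmüller twist straight through the change of variables, so that the factored double sum lands on $e_{\psi\omega}$ rather than on some other idempotent; in the first part, the one nonformal ingredient is the locality of $\mathbf{Z}_p[S]$, which is what upgrades the unit constant term of $w$ to the genuine invertibility needed to conclude $(\Delta)=I_S$.
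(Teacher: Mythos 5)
Your proof is correct, and there is in fact nothing in the paper to compare it with: the Fait is stated with a bare citation to Jaulent (Proposition 2 of his paper) and no argument is given, so your proposal supplies exactly what the citation hides. Two remarks. First, you silently (and rightly) corrected a typo in the statement: $\Delta$ is by definition supported on $S$, so the claim is that $\Delta$ generates the augmentation ideal $I_S$ of $\mathbf{Z}_p[S]$, not of $\mathbf{Z}_p[T]$; this is also the reading forced by the way the Fait is used (the exact sequence $1 \to U_i \to I_i \to I_{i+1} \to 1$ with the map given by multiplication by $\Delta$). Your argument for this half is sound: with $y=[\sigma]-1$ one has $I_S=(y)$, the constant coefficient of $\Delta$ vanishes because $\omega^{-1}$ is a nontrivial character, the coefficient of $y$ is $\frac{1}{p-1}\sum_{a} a\,\omega(a)^{-1} \equiv 1 \pmod{p}$, and the locality of $\mathbf{Z}_p[S]$ with maximal ideal $(p,y)$ — via $\mathbf{F}_p[S]\cong \mathbf{F}_p[y]/(y^p)$ and integrality over $\mathbf{Z}_p$ — turns the unit residue of the cofactor $w$ into genuine invertibility, whence $(\Delta)=(y)=I_S$. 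Second, your double-sum verification of the intertwining identity is correct, but the bookkeeping you worried about can be made to disappear by isolating the single relation $[\tau]\cdot\Delta\cdot[\tau]^{-1}=\omega(\tau)\cdot\Delta$ (conjugation by $\tau$ sends $[\sigma^{\omega(\rho)}]$ to $[\sigma^{\omega(\tau\rho)}]$, and reindexing by $\rho'=\tau\rho$ extracts the factor $\omega(\tau)$); then $\Delta\cdot e_\psi=\frac{1}{p-1}\sum_{\tau}\psi(\tau^{-1})\,\Delta\,[\tau]=\frac{1}{p-1}\sum_{\tau}(\psi\omega)(\tau^{-1})\,[\tau]\,\Delta=e_{\psi\omega}\cdot\Delta$ in one line. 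This is the same computation as yours, just packaged so that the Teichmüller twist enters exactly once.
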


En suivant Iimura, on a la suite exacte suivante, pour tout $0 \leq i \leq p-3$ :
$$1 \rightarrow U_i \rightarrow I_i \rightarrow I_{i+1} \rightarrow 1$$
où la flèche $I_i \rightarrow I_{i+1}$ est la multiplication par $\Delta$.
Par le fait~\ref{shift}, cette suite induit pour tout caractère $\chi$ de $T$ une suite exacte :
\begin{equation}
1 \rightarrow U_i(\chi \cdot \omega^{-1}) \rightarrow I_i(\chi\cdot \omega^{-1}) \rightarrow I_{i+1}(\chi)\rightarrow 1\text{ .}
\label{suite_exacte_I}
\end{equation}
Il semble difficile de comprendre les $U_i$ en général, mais $U_0$ est calculable en termes de symboles de Hilbert. La théorie du genre nous donne par ailleurs une description de $I_0(\chi)$ pour tout $\chi$. La suite exacte permet alors une majoration de $I_i(\chi)$. 

Pour énoncer la formule des genres de Jaulent, on a besoin de quelques notations. Soit $\chi : T \rightarrow \mathbf{Z}_p^{\times}$ un caractère. Soit $n(\chi)$ le $p$-rang du groupe $(\mathbf{Z}[\zeta_p]^{\times}/\mathbf{Z}[\zeta_p]^{\times}\cap N_{K_1/\mathbf{Q}(\zeta_p)}(K_1))(\chi)$. Soit $\overline{\mathcal{C}}_{K_1}$ le noyau de $N_{K_1/\mathbf{Q}(\zeta_p)} : \mathcal{C}_{K_1} \rightarrow \mathcal{C}_{\mathbf{Q}(\zeta_p)}$. On a $I_{S} \cdot \mathcal{C}_{K_1} \subset \overline{\mathcal{C}}_{K_1}$ car $\delta\cdot ([\sigma]-1)=0$. Soit $J = \overline{\mathcal{C}}_{K_1}/(I_{S} \cdot\mathcal{C}_{K_1})$, et $j(\chi)$ le $\chi$-rang de $J$. Remarquons que $J^p =1$ par le lemme~\ref{delta}.

La proposition suivante, qui utilise la théorie du genre, sera importante pour comprendre $\alpha_0(\chi)$ (\textit{cf.} la proposition~\ref{sansnom1} ci-dessous).
\begin{prop}\label{genus}
On a, pour tout caractère $\chi : T \rightarrow \mathbf{Z}_p^{\times}$ :
$$j(\chi) = 1-n(\chi) $$
sauf si $N \equiv 1  \text{ (mod } p^2\text{)}$ et que $\chi=\omega$, auquel cas on a 
$$j(\omega) = 0 \text{ .} $$
\end{prop}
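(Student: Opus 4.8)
The plan is to treat the statement as genus theory for the cyclic degree-$p$ extension $K_1/\mathbf{Q}(\zeta_p)$ with Galois group $S=\langle\sigma\rangle$, and to extract the $\chi$-eigenspace dimensions of $J$ via Chevalley's ambiguous class number formula. The first step is to interpret $J$ cohomologically. Since $I_S\cdot\mathcal{C}_{K_1}=([\sigma]-1)\cdot\mathcal{C}_{K_1}$ and $\overline{\mathcal{C}}_{K_1}=\ker N_{K_1/\mathbf{Q}(\zeta_p)}$, the group $J=\overline{\mathcal{C}}_{K_1}/I_S\mathcal{C}_{K_1}$ is the Tate cohomology group $\hat{H}^{-1}(S,\mathcal{C}_{K_1})$, provided one controls the discrepancy between the true norm and multiplication by $\delta=\sum_i[\sigma^i]$, i.e. the capitulation kernel of $\mathcal{C}_{\mathbf{Q}(\zeta_p)}\to\mathcal{C}_{K_1}$; defining $J$ through the genuine norm is exactly what makes this clean. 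As $\mathcal{C}_{K_1}$ is finite and $S$ is cyclic, the Herbrand quotient is $1$, so $|\hat{H}^{-1}(S,\mathcal{C}_{K_1})|=|\hat{H}^0(S,\mathcal{C}_{K_1})|$, and as $T$-modules the two differ by a twist by $\omega$, since $T$ conjugates $S$ through $\omega$ (this is the shift of Fait~\ref{shift}). Finally $J^p=1$ by Lemme~\ref{delta}, so each $e_\chi J$ is an $\mathbf{F}_p$-vector space and $j(\chi)=\dim_{\mathbf{F}_p}e_\chi J$ is what must be computed.

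The engine is the ambiguous class number formula, obtained by taking $S$-cohomology of $1\to\mathbf{Z}[\zeta_p,\alpha]^{\times}\to K_1^{\times}\to P_{K_1}\to 1$ and $1\to P_{K_1}\to\mathcal{I}_{K_1}\to\mathcal{C}_{K_1}\to 1$, where $\mathcal{I}_{K_1}$ and $P_{K_1}$ denote the ideals and the principal ideals. Hilbert 90 gives $\hat{H}^1(S,K_1^{\times})=0$, and since $\mathcal{I}_{K_1}$ is a permutation module, $\hat{H}^{-1}(S,\mathcal{I}_{K_1})=\hat{H}^1(S,\mathcal{I}_{K_1})=0$ while $\hat{H}^0(S,\mathcal{I}_{K_1})$ is supported on the non-split primes, with only ramified primes surviving in the final count as in Chevalley's formula. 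Threading these long exact sequences expresses the cohomology of $\mathcal{C}_{K_1}$ in terms of two data: the ramification indices $e_v$ of $K_1/\mathbf{Q}(\zeta_p)$, and the unit-norm obstruction $\mathbf{Z}[\zeta_p]^{\times}/(\mathbf{Z}[\zeta_p]^{\times}\cap N_{K_1/\mathbf{Q}(\zeta_p)}(K_1^{\times}))$, whose $\chi$-rank is precisely $n(\chi)$. This is the shape of Jaulent's genus formula, of which the proposition is the specialization.

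Next I would decompose into $\chi$-eigenspaces. The extension $K_1/\mathbf{Q}(\zeta_p)$ is ramified at the $p-1$ primes above $N$ (each totally ramified, since $N\equiv1\pmod p$ splits completely in $\mathbf{Q}(\zeta_p)$ with local valuation prime to $p$) and possibly at the unique prime $(1-\zeta_p)$ above $p$. The $N$-primes form a single free $T$-orbit, so their contribution to the ramification module is isomorphic to the regular representation $\mathbf{F}_p[T]$, whose every $\chi$-eigenspace is one-dimensional; this produces the uniform term $1$, and it is insensitive to the $\omega$-twist since $\mathbf{F}_p[T]$ contains each character exactly once. Balancing this against the unit-norm obstruction $n(\chi)$ then yields $j(\chi)=1-n(\chi)$ in every eigenspace not disturbed by the prime above $p$, in particular for all $\chi\neq\omega$; one checks that $n(\chi)\leq1$ always (the $\chi$-rank of $\mathbf{Z}[\zeta_p]^{\times}\otimes\mathbf{Z}_p$ is at most $1$), so the right-hand side is a genuine dimension.

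The main obstacle, and the origin of the exceptional case, is the local analysis at $(1-\zeta_p)$: being the unique prime above $p$ it is fixed by $T$, and its contribution sits in a single eigenspace, which one must verify is the $\omega$-component (this is where the torsion $\mu_p\subset\mathbf{Z}[\zeta_p]^{\times}$, on which $T$ acts by $\omega$, also lives). Writing $\nu=v_p(N-1)$, the element $N$ is a principal unit of $\mathbf{Q}_p(\zeta_p)$ with $(1-\zeta_p)$-valuation $(p-1)\nu$; comparing with the threshold $p=\frac{pe}{p-1}$ (with $e=p-1$ the ramification of $\mathbf{Q}_p(\zeta_p)/\mathbf{Q}_p$) beyond which principal units become $p$-th powers shows that $(1-\zeta_p)$ ramifies in $K_1$ exactly when $\nu=1$, i.e. $N\not\equiv1\pmod{p^2}$, with no borderline case since $(p-1)\nmid p$. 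I would then invoke the Hasse norm theorem (valid for the cyclic extension $K_1/\mathbf{Q}(\zeta_p)$) to decide whether $\zeta_p$ is a global norm, again reducing to a local condition at $(1-\zeta_p)$ governed by $N\bmod p^2$. Combining the ramification contribution at $p$ with this unit term in the $\omega$-eigenspace gives $j(\omega)=1-n(\omega)$ when $N\not\equiv1\pmod{p^2}$ and the collapse $j(\omega)=0$ when $N\equiv1\pmod{p^2}$. The delicate points are the precise tracking of the $\omega$-twist between $\hat{H}^{-1}$ and $\hat{H}^0$, so that the exception lands at $\chi=\omega$, and the conversion of orders into ranks, which rests on $J^p=1$.
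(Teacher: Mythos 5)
Your overall architecture matches the paper's: genus theory for the cyclic extension $K_1/\mathbf{Q}(\zeta_p)$, $\chi$-eigenspace decomposition, the free $T$-orbit of primes above $N$ contributing a copy of the regular representation $\mathbf{F}_p[T]$ (hence $1$ in every eigenspace), the unique prime $(1-\zeta_p)$ contributing only in the $\omega$-eigenspace, and its ramification governed by $N \bmod p^2$. The identification of $J$ with $\hat{H}^{-1}(S,\mathcal{C}_{K_1})$ is also fine, since the extension-of-ideals map $\mathcal{C}_{\mathbf{Q}(\zeta_p)}\to\mathcal{C}_{K_1}$ is injective ($K_1/\mathbf{Q}(\zeta_p)$ is ramified at $N$), so $\ker\delta=\ker N_{K_1/\mathbf{Q}(\zeta_p)}$.

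However, there is a genuine gap at the pivot of your argument: the passage from $\hat{H}^0(S,\mathcal{C}_{K_1})$ (which is what Chevalley's ambiguous class number formula computes) to $\hat{H}^{-1}(S,\mathcal{C}_{K_1})=J$ via ``Herbrand quotient $=1$ plus an $\omega$-twist of $T$-modules.'' The Herbrand quotient equality $|\hat{H}^{-1}|=|\hat{H}^0|$ is a statement about total cardinalities only; it cannot be applied eigenspace by eigenspace, because $e_\chi\mathcal{C}_{K_1}$ is not an $S$-submodule ($e_\chi$ and $\mathbf{Z}_p[S]$ do not commute in $\mathbf{Z}_p[G]$ — that non-commutation is precisely the content of Fait~\ref{shift}). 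Nor is there a natural $T$-isomorphism $\hat{H}^{-1}\cong\hat{H}^0\otimes\omega^{\pm1}$: for $M=\mathbf{F}_p(\chi)$ with trivial $S$-action one gets $\hat{H}^{-1}\cong\hat{H}^0\cong\chi$ (no twist), while for $M=\mathbf{F}_p[S]/I_S^2$ one gets $\hat{H}^{-1}\cong\chi_0$ and $\hat{H}^0\cong\omega$ (a twist); a direct sum of the two gives a module for which no uniform relation holds. The periodicity isomorphism only relates $\hat{H}^{-1}$ to $\hat{H}^{1}$ (up to an $\omega^{\pm1}$-twist), not to $\hat{H}^0$. Since the proposition is precisely an eigenspace-by-eigenspace statement, and since applying your twist would in fact misplace both the unit term $n(\chi)$ and the exceptional character, this step cannot be repaired cheaply. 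The paper avoids the issue by never passing through $\hat{H}^0$: it invokes Jaulent's coinvariants-side genus sequence $1\to\mathbf{Z}[\zeta_p]^{\times}/(\mathbf{Z}[\zeta_p]^{\times}\cap N_{K_1/\mathbf{Q}(\zeta_p)}(K_1))\to\oplus_{\mathfrak{p}}I_{\mathfrak{p}}\to\ker(\mathrm{res})\to1$, where $\ker(\mathrm{res})\subset\mathrm{Gal}(\tilde{K_1}/\mathbf{Q}(\zeta_p))$ is shown by a snake lemma to be an extension of $S$ by $J$ itself; taking $\chi$-parts of that $T$-equivariant sequence of $\mathbf{F}_p$-vector spaces gives the result directly. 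You would need to either prove such a coinvariants-side formula from scratch (via the genus field $\tilde{K_1}$ and idelic norms) or cite it, rather than deduce it from the invariants-side count.
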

\begin{proof}
D'après \cite[Proposition $8$]{Jaulent}, on a une suite exacte :
\begin{equation}
1 \rightarrow \mathbf{Z}[\zeta_p]^{\times}/\mathbf{Z}[\zeta_p]^{\times} \cap N_{K_1/\mathbf{Q}(\zeta_p)}(K_1) \rightarrow \oplus_{\mathfrak{p}} I_{\mathfrak{p}} \rightarrow Ker( Gal(\tilde{K_1}/\mathbf{Q}(\zeta_p)) \overset{res}{\longrightarrow}Gal(\tilde{\mathbf{Q}(\zeta_p)}/\mathbf{Q}(\zeta_p))) \rightarrow 1
\label{suite_Jaulent}
\end{equation}
Ici $\tilde{K}_1$ est la plus grande extension d'ordre une puissance de $p$ non ramifiée sur $K_1$ dans $\overline{\mathbf{Q}}$ qui est abélienne sur $\mathbf{Q}(\zeta_p)$. Le corps $\tilde{\mathbf{Q}(\zeta_p)}$ est le $p$-Hilbert de $\mathbf{Q}(\zeta_p)$ et $\oplus_{\mathfrak{p}} I_{\mathfrak{p}}$ est la somme directe des groupes d'inerties dans $Gal(\tilde{K_1}/\mathbf{Q}(\zeta_p))$ en les idéaux premiers $\mathfrak{p}$ de $\mathbf{Z}[\zeta_p]$ se ramifiant dans $K_1$. La flèche de gauche est donnée par le symbole d'Artin en les différents $\mathfrak{p}$, le point important est l'exactitude au milieu. Le groupe $\text{Ker}( Gal(\tilde{K_1}/\mathbf{Q}(\zeta_p)) \overset{res}{\longrightarrow}Gal(\tilde{\mathbf{Q}(\zeta_p)}/\mathbf{Q}(\zeta_p)))$ est une extension de $S$ par $J$. En effet on a un diagramme commutatif dont les lignes sont exactes : 
$$\xymatrix{
   0 \ar[r] &  \text{Gal}(\tilde{K_1}/K_1) \ar[r] \ar[d]_{\text{res}}  &  \text{Gal}(\tilde{K_1}/\mathbf{Q}(\zeta_p))  \ar[r] \ar[d]_{\text{res}}  &  S \ar[r] \ar[d]& 0  \\
    0 \ar[r] &  \text{Gal}(\tilde{\mathbf{Q}(\zeta_p)}/\mathbf{Q}(\zeta_p)) \ar[r]^{=}  & \text{Gal}(\tilde{\mathbf{Q}(\zeta_p)}/\mathbf{Q}(\zeta_p))  \ar[r] &  0 \ar[r] & 0   
  }$$
Le noyau de la flèche verticale de gauche est $J$ par la théorie globale du corps de classe (en effet, la restriction des groupes de Galois correspond à la norme des idéaux). Le lemme du serpent nous donne alors une suite exacte :
$$ 0 \rightarrow J \rightarrow \text{Ker}( Gal(\tilde{K_1}/\mathbf{Q}(\zeta_p)) \overset{res}{\longrightarrow}Gal(\tilde{\mathbf{Q}(\zeta_p)}/\mathbf{Q}(\zeta_p))) \rightarrow S \rightarrow 0 \text{ .}$$
 Comme les $I_{\mathfrak{p}}$ sont d'ordre $p$, la suite exacte~\eqref{suite_Jaulent} est une suite exacte de de $\mathbf{F}_p$-espace vectoriels, qui est $T$-équivariante (pour l'action usuelle de $T$ sur $\oplus_{\mathfrak{p}} I_{\mathfrak{p}})$. 

Considérons maintenant la $\chi$-partie de cette suite exacte.

Si $\chi \neq \omega$, on a $S(\chi)=0$. Seuls les idéaux premiers au-dessus de $N$ dans $\mathbf{Z}[\zeta_p]$ interviennent dans le terme $(\oplus I_{\mathfrak{p}})(\chi)$, et comme $T$ agit transitivement sur ces idéaux, le théorème~\ref{genus} est prouvé.

Si $\chi = \omega$, on a $S(\chi) \simeq \mathbf{F}_p$. Les idéaux premiers $\mathfrak{p}$ de $\mathbf{Z}[\zeta_p]$ intervenant dans $\mathbf{Z}[\zeta_p]$ sont ceux au-dessus de $N$ et de $p$ sauf si $N \equiv 1  \text{ (mod } p^2\text{)}$ auquel cas $K_1$ n'est pas ramifié en $p$ au-dessus de $\mathbf{Q}(\zeta_p)$ et seuls les idéaux premiers $\mathfrak{p}$ au-dessus de $N$ interviennent. Cela achève la preuve du théorème~\ref{genus} dans ce cas.
\end{proof}

En fait on montrera dans le lemme~\ref{j(w)} qu'on a toujours $j(\omega)=0$.

Comme $K_1$ est ramifié en $N$ au-dessus de $\mathbf{Q}(\zeta_p)$, la flèche naturelle $\mathcal{C}_{\mathbf{Q}(\zeta_p)} \rightarrow \mathcal{C}_{K_1}$ est injective. On voit donc $\mathcal{C}_{\mathbf{Q}(\zeta_p)}$ comme un sous-groupe de $\mathcal{C}_{K_1}$. Par ailleurs la théorie globale du corps de classe nous donne :
\begin{equation}
\text{N}_{K_1/\mathbf{Q}(\zeta_p)}(\mathcal{C}_{K_1}) = \mathcal{C}_{\mathbf{Q}(\zeta_p)}
\label{norme}
\end{equation}
(en effet, en termes d'extensions abéliennes, la norme correspond à la restriction des groupes de Galois, et la restriction est ici surjective car $K_1$ est totalement ramifié en $N$ sur $\mathbf{Q}(\zeta_p)$).

\begin{lem}\label{sans_nom_2}
On a dans $\mathcal{C}_{K_1}$ :
$$(I_{S}\cdot\mathcal{C}_{K_1} )\cdot \mathcal{C}_{K_1}^p = (I_{S}\cdot\mathcal{C}_{K_1} )\cdot \mathcal{C}_{\mathbf{Q}(\zeta_p)} $$
(où l'on voit abusivement $\mathcal{C}_{\mathbf{Q}(\zeta_p)}$ comme un sous-groupe de $\mathcal{C}_{K_1}$).
\end{lem}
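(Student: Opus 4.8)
The plan is to rewrite everything as an equality of images of operators in the group algebra $\mathbf{Z}_p[S]$ acting on $\mathcal{C}_{K_1}$, and then to dispatch it by a one-line module manipulation. Throughout I would write $\mathcal{C}_{K_1}$ additively: it is a finite abelian $p$-group, hence a $\mathbf{Z}_p[S]$-module, so the subgroup product becomes a sum and $\mathcal{C}_{K_1}^p$ becomes $p\cdot\mathcal{C}_{K_1}$. Since $S$ is cyclic of order $p$, its augmentation ideal is generated by $[\sigma]-1$ (because $[\sigma^i]-1=([\sigma]-1)(1+[\sigma]+\cdots+[\sigma^{i-1}])$), so $I_S\cdot\mathcal{C}_{K_1}=([\sigma]-1)\cdot\mathcal{C}_{K_1}$.

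The first step is to identify the three pieces of the statement with images of explicit operators. By le lemme~\ref{delta} we may write $p=(([\sigma]-1)^{p-1}-\delta)\cdot v$ for some unit $v\in\mathbf{Z}_p[S]^{\times}$; as $v$ acts as an automorphism of $\mathcal{C}_{K_1}$, this yields $p\cdot\mathcal{C}_{K_1}=(([\sigma]-1)^{p-1}-\delta)\cdot\mathcal{C}_{K_1}$. For the term $\mathcal{C}_{\mathbf{Q}(\zeta_p)}$, I would use that the endomorphism of $\mathcal{C}_{K_1}$ given by relative norm followed by extension of ideals is exactly multiplication by $\delta=\sum_{i=0}^{p-1}[\sigma^i]$, since $\mathrm{N}_{K_1/\mathbf{Q}(\zeta_p)}(\mathfrak{A})\mathcal{O}_{K_1}=\prod_{s\in S}s(\mathfrak{A})$. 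Thus $\delta\cdot\mathcal{C}_{K_1}$ is the image of $\mathrm{N}_{K_1/\mathbf{Q}(\zeta_p)}(\mathcal{C}_{K_1})$ under the injection $\mathcal{C}_{\mathbf{Q}(\zeta_p)}\hookrightarrow\mathcal{C}_{K_1}$, which by~\eqref{norme} is precisely the copy of $\mathcal{C}_{\mathbf{Q}(\zeta_p)}$ inside $\mathcal{C}_{K_1}$. Hence the lemma becomes the identity of subgroups
\[
([\sigma]-1)\mathcal{C}_{K_1} + (([\sigma]-1)^{p-1}-\delta)\mathcal{C}_{K_1} = ([\sigma]-1)\mathcal{C}_{K_1} + \delta\,\mathcal{C}_{K_1}.
\]

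The second step is the purely algebraic check of this identity. Since $p-1\geq 1$ we have $([\sigma]-1)^{p-1}\mathcal{C}_{K_1}\subseteq([\sigma]-1)\mathcal{C}_{K_1}$, and combining this with $(([\sigma]-1)^{p-1}-\delta)\mathcal{C}_{K_1}\subseteq([\sigma]-1)^{p-1}\mathcal{C}_{K_1}+\delta\mathcal{C}_{K_1}$ gives the inclusion $\subseteq$. For the reverse, I would write $\delta=([\sigma]-1)^{p-1}-(([\sigma]-1)^{p-1}-\delta)$, so that $\delta\,\mathcal{C}_{K_1}\subseteq([\sigma]-1)^{p-1}\mathcal{C}_{K_1}+(([\sigma]-1)^{p-1}-\delta)\mathcal{C}_{K_1}\subseteq([\sigma]-1)\mathcal{C}_{K_1}+(([\sigma]-1)^{p-1}-\delta)\mathcal{C}_{K_1}$, which gives $\supseteq$. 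The two inclusions prove the claim.

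The only genuinely delicate point is the identification $\delta\cdot\mathcal{C}_{K_1}=\mathcal{C}_{\mathbf{Q}(\zeta_p)}$ inside $\mathcal{C}_{K_1}$: it rests on the injectivity of $\mathcal{C}_{\mathbf{Q}(\zeta_p)}\hookrightarrow\mathcal{C}_{K_1}$ (valid because $K_1/\mathbf{Q}(\zeta_p)$ is ramified at $N$), on the standard fact that extension-of-the-norm is multiplication by $\delta$, and on the surjectivity~\eqref{norme} of the norm on class groups. Once these are in place the whole statement collapses to the manipulation above, so I anticipate no real difficulty beyond the bookkeeping between the multiplicative notation for class groups and the additive notation for $\mathbf{Z}_p[S]$-modules.
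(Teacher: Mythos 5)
Votre démonstration est correcte et suit essentiellement la même stratégie que celle de l'article : on identifie $\mathcal{C}_{\mathbf{Q}(\zeta_p)}$ à $\delta\cdot\mathcal{C}_{K_1}$ via la surjectivité de la norme~\eqref{norme}, puis on utilise le lemme~\ref{delta} pour échanger $p\cdot\mathcal{C}_{K_1}$ et $(([\sigma]-1)^{p-1}-\delta)\cdot\mathcal{C}_{K_1}$ et conclure par double inclusion. La seule différence est cosmétique (notation additive et explicitation des deux inclusions que l'article condense en deux chaînes d'égalités).
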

\begin{proof}
En effet, par le lemme~\ref{delta}, $$\mathcal{C}_{K_1}^p = (I_{S}^{p-1}\cdot\mathcal{C}_{K_1} )\cdot (\delta \cdot \mathcal{C}_{K_1}) \subset (I_{S}\cdot\mathcal{C}_{K_1} )\cdot \text{N}_{K_1/\mathbf{Q}(\zeta_p)}(\mathcal{C}_{K_1}) = (I_{S}\cdot\mathcal{C}_{K_1} )\cdot \mathcal{C}_{\mathbf{Q}(\zeta_p)}$$ la dernière égalité venant de la remarque précédente. Réciproquement,
$$\mathcal{C}_{\mathbf{Q}(\zeta_p)} = \text{N}_{K_1/\mathbf{Q}(\zeta_p)}(\mathcal{C}_{K_1}) = \delta \cdot \mathcal{C}_{K_1} = (I_{S}^{p-1}\cdot\mathcal{C}_{K_1} )\cdot (\mathcal{C}_{K_1})^p \subset  (I_{S}\cdot\mathcal{C}_{K_1} )\cdot \mathcal{C}_{K_1}^p$$
la dernière égalité venant du lemme~\ref{delta}. 
\end{proof}

Soit $\psi : \overline{\mathcal{C}}_{K_1} \rightarrow \overline{\mathcal{C}}_{K_1}/(I_{S}\cdot  \mathcal{C}_{K_1}) = J$ la projection canonique.

Notons $r_{\mathbf{Q}(\zeta_p)}(\chi)$ le $p$-rang de $e_{\chi}(\mathcal{C}_{\mathbf{Q}(\zeta_p)})$ et $\overline{\mathcal{C}}_{\mathbf{Q}(\zeta_p)} = \{c \in \mathcal{C}_{\mathbf{Q}(\zeta_p)}, c^p=1\}$.
La proposition suivante est la clé pour comprendre $\alpha_0(\chi)$.
\begin{prop}\label{sansnom1}\cite[Lemma $1.3$]{Iimura}
Pour tout caractère $\chi : T \rightarrow \mathbf{Z}_p^{\times}$, on a :
$$\alpha_0(\chi) = r_{\mathbf{Q}(\zeta_p)}(\chi) + j(\chi)- \text{dim}_{\mathbf{F}_p}(\psi(e_{\chi}\cdot \overline{\mathcal{C}}_{\mathbf{Q}(\zeta_p)})) \leq r_{\mathbf{Q}(\zeta_p)}(\chi) + j(\chi) \text{ .}$$
\end{prop}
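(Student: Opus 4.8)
The plan is to realize $I_0$ as an extension whose kernel and cokernel are precisely the two ingredients on the right-hand side, then to cut out the $\chi$-part and add dimensions. First I would unwind the definitions: since $I_S^0\cdot\mathcal{C}_{K_1}=\mathcal{C}_{K_1}$ one has $J_0=\mathcal{C}_{K_1}$ and $J_1=(I_S\cdot\mathcal{C}_{K_1})\cdot\mathcal{C}_{K_1}^p$, so Lemme~\ref{sans_nom_2} gives
$$I_0 = \mathcal{C}_{K_1} / \big( (I_S\cdot\mathcal{C}_{K_1})\cdot\mathcal{C}_{\mathbf{Q}(\zeta_p)} \big).$$

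Next I would bring in the norm $N=\text{N}_{K_1/\mathbf{Q}(\zeta_p)}$. By \eqref{norme} it is surjective onto $\mathcal{C}_{\mathbf{Q}(\zeta_p)}$; moreover it kills $I_S\cdot\mathcal{C}_{K_1}$ (because $\delta\cdot([\sigma]-1)=0$), and it sends the copy of $\mathcal{C}_{\mathbf{Q}(\zeta_p)}$ sitting inside $\mathcal{C}_{K_1}$ to $\mathcal{C}_{\mathbf{Q}(\zeta_p)}^p$, since the composite of the extension-of-ideals map with the norm is the $[K_1:\mathbf{Q}(\zeta_p)]=p$-th power map. Hence $N$ factors through a surjection $\bar N : I_0 \twoheadrightarrow \mathcal{C}_{\mathbf{Q}(\zeta_p)}/\mathcal{C}_{\mathbf{Q}(\zeta_p)}^p$; because $\delta$ is invariant under conjugation by $T$, the map $\bar N$ is $T$-equivariant.

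The heart of the argument, and the step I expect to be the most delicate, is the identification of $\ker(\bar N)$. Writing $A=N^{-1}(\mathcal{C}_{\mathbf{Q}(\zeta_p)}^p)$, I would first check that $A=\overline{\mathcal{C}}_{K_1}\cdot\mathcal{C}_{\mathbf{Q}(\zeta_p)}$ (if $N(x)=c^p=N(c)$ then $xc^{-1}\in\overline{\mathcal{C}}_{K_1}$), so that $\ker(\bar N)=A/J_1$. Since $I_S\cdot\mathcal{C}_{K_1}\subseteq\overline{\mathcal{C}}_{K_1}$, the modular law for subgroups yields
$$\overline{\mathcal{C}}_{K_1}\cap\big( (I_S\cdot\mathcal{C}_{K_1})\cdot\mathcal{C}_{\mathbf{Q}(\zeta_p)} \big) = (I_S\cdot\mathcal{C}_{K_1})\cdot\big( \overline{\mathcal{C}}_{K_1}\cap\mathcal{C}_{\mathbf{Q}(\zeta_p)} \big),$$
and the restriction of $N$ to $\mathcal{C}_{\mathbf{Q}(\zeta_p)}$ being the $p$-th power map gives $\overline{\mathcal{C}}_{K_1}\cap\mathcal{C}_{\mathbf{Q}(\zeta_p)}=\overline{\mathcal{C}}_{\mathbf{Q}(\zeta_p)}$. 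The second isomorphism theorem then produces
$$\ker(\bar N) \cong \overline{\mathcal{C}}_{K_1}/\big( (I_S\cdot\mathcal{C}_{K_1})\cdot\overline{\mathcal{C}}_{\mathbf{Q}(\zeta_p)} \big) = J/\psi(\overline{\mathcal{C}}_{\mathbf{Q}(\zeta_p)}).$$

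Finally I would apply the exact idempotent $e_\chi$ (legitimate since $\#T=p-1$ is prime to $p$) to the short exact sequence $1\to J/\psi(\overline{\mathcal{C}}_{\mathbf{Q}(\zeta_p)})\to I_0\xrightarrow{\bar N}\mathcal{C}_{\mathbf{Q}(\zeta_p)}/\mathcal{C}_{\mathbf{Q}(\zeta_p)}^p\to 1$ of $\mathbf{F}_p[T]$-modules and add dimensions. Using $\dim_{\mathbf{F}_p} e_\chi(\mathcal{C}_{\mathbf{Q}(\zeta_p)}/\mathcal{C}_{\mathbf{Q}(\zeta_p)}^p)=r_{\mathbf{Q}(\zeta_p)}(\chi)$, $\dim_{\mathbf{F}_p} e_\chi(J)=j(\chi)$, and the $T$-equivariance of $\psi$ (so that $e_\chi\circ\psi=\psi\circ e_\chi$ and hence $\dim_{\mathbf{F}_p} e_\chi\psi(\overline{\mathcal{C}}_{\mathbf{Q}(\zeta_p)})=\dim_{\mathbf{F}_p}\psi(e_\chi\cdot\overline{\mathcal{C}}_{\mathbf{Q}(\zeta_p)})$), one obtains exactly $\alpha_0(\chi)=r_{\mathbf{Q}(\zeta_p)}(\chi)+j(\chi)-\dim_{\mathbf{F}_p}\psi(e_\chi\cdot\overline{\mathcal{C}}_{\mathbf{Q}(\zeta_p)})$, and the displayed inequality follows because the last term is nonnegative.
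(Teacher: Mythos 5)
Your argument is correct and is in substance the paper's own proof: both reduce to the exact sequence $1\to J(\chi)/\psi(e_\chi\cdot\overline{\mathcal{C}}_{\mathbf{Q}(\zeta_p)})\to I_0(\chi)\to \mathcal{C}_{\mathbf{Q}(\zeta_p)}(\chi)/\mathcal{C}_{\mathbf{Q}(\zeta_p)}(\chi)^p\to 1$, with Lemme~\ref{sans_nom_2} as the key input and a dimension count on $\chi$-parts at the end. The only difference is presentational: the paper obtains this sequence by the snake lemma applied to the inclusion of $B(\chi)^p$ into $B(\chi)$ for $B=\mathcal{C}_{K_1}/(I_S\cdot\mathcal{C}_{K_1})$, whereas you compute $\ker(\bar N)$ directly via the modular law and the second isomorphism theorem.
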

\begin{proof}
Posons $B = \mathcal{C}_{K_1}/(I_S \cdot \mathcal{C}_{K_1})$. Alors $\alpha_0(\chi) = \text{dim}_{\mathbf{F}_p}((e_{\chi}\cdot B)/(e_{\chi}\cdot B)^p)$.
Considérons le diagramme commutatif dont les lignes sont exactes et les flèches verticales sont des inclusions : 
$$\xymatrix{
   1 \ar[r] & \psi(e_{\chi}\cdot (\mathcal{C}_{K_1}^p \cap \overline{\mathcal{C}}_{K_1})) \ar[r] \ar[d] &  B(\chi)^p  \ar[r] \ar[d]&  \mathcal{C}_{\mathbf{Q}(\zeta_p)}(\chi)^p \ar[r] \ar[d] & 1  \\
    1 \ar[r] & J(\chi) \ar[r] &  B(\chi)  \ar[r] &  \mathcal{C}_{\mathbf{Q}(\zeta_p)}(\chi) \ar[r] & 1
  }$$
Les troisièmes flèches horizontales correspondent à la norme. Par le lemme~\ref{sans_nom_2}, on a  $\psi(\mathcal{C}_{K_1}^p \cap \overline{\mathcal{C}}_{K_1}) = \psi(\overline{\mathcal{C}}_{\mathbf{Q}(\zeta_p)})$. Cela achève la démonstration de la proposition~\ref{sansnom1} par le lemme du serpent.
\end{proof}

En particulier, par la suite exacte~\eqref{suite_exacte_I} on a :
\begin{equation}
\alpha_i(\chi_0) \leq \alpha_0(\omega^{-i}) \leq 1-n(\omega^{-i})+r_{\mathbf{Q}(\zeta_p)}(\omega^{-i}) \text{ .}
\label{majoration_grossière}
\end{equation}
On va raffiner cette majoration pour certains $i$. 

On suppose désormais, sauf mention du contraire, que $i$ est impair, $i \neq p-2$ et $\mathcal{C}_{\mathbf{Q}(\zeta_p)}(\omega^{-i}) = 0$. Par exemple $i=1$ convient par le théorème d'Herbrand--Ribet puisque $B_2 = \frac{1}{6}$ est premier à $p\geq 5$. 

On a (sous les hypothèses sur $i$) $\alpha_0(\omega^{-i}) = j(\omega^{-i})=1$.
En effet, il s'agit de voir par le théorème~\ref{genus} que $n(\omega^{-i})=0$. Il suffit de voir que $\mathbf{Z}[\zeta_p]^{\times}(\omega^{-i})=0$. Cela découle du résultat classique suivant :

\begin{lem}\label{cyclotomic_unit}
Soit $n>1$ un entier impair. Soit $U_n$ (resp. $U_n^+$) le groupe des unités de l'anneau des entiers de $\mathbf{Q}(\zeta_n)$ (resp. du sous-corps totalement réel maximal $\mathbf{Q}(\zeta_n)^+$ de $\mathbf{Q}(\zeta_n)$). 

Si $n$ est un nombre premier impair, on a :
$$U_n = \zeta_n^{\mathbf{Z}} \cdot U_n^+ \text{ .}$$
En général, sans supposer que $n$ est premier, 
$$U_n/(U_n^+ \cdot \zeta_n^{\mathbf{Z}})$$
est un $2$-groupe d'exposant divisant $2$.
\end{lem}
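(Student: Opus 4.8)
The plan is to run the classical ``unit index'' argument that compares $U_n$ with $U_n^+$ through the homomorphism of dividing a unit by its complex conjugate. Write $E = \mathbf{Q}(\zeta_n)$, let $c$ denote complex conjugation (the generator of $\mathrm{Gal}(E/\mathbf{Q}(\zeta_n)^+)$), and let $W \subseteq E^\times$ be the group of roots of unity, which is cyclic of order $2n$ since $n$ is odd. First I would introduce the map $\phi : U_n \to E^\times$, $u \mapsto u/c(u)$, and check that it lands in $W$: for every embedding $\sigma : E \hookrightarrow \mathbf{C}$ one has $\sigma(c(u)) = \overline{\sigma(u)}$, hence $|\sigma(u/c(u))| = 1$; as $u/c(u)$ is an algebraic integer (a ratio of units) all of whose conjugates lie on the unit circle, Kronecker's theorem gives $u/c(u) \in W$.

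Next I would identify the kernel and the behaviour on roots of unity. Since $u/c(u) = 1$ exactly when $u$ is fixed by $c$, i.e.\ real, one has $\ker \phi = U_n \cap \mathbf{Q}(\zeta_n)^+ = U_n^+$, so that $U_n/U_n^+ \cong \phi(U_n) \subseteq W$. For $w \in W$ one has $c(w) = w^{-1}$, hence $\phi(w) = w^2$, so $\phi(W) = W^2$, the unique subgroup of index $2$ in the cyclic group $W$ of even order $2n$. Because $W \cap U_n^+ = \{\pm 1\}$ and $-1 \in U_n^+$, the subgroup $W \cdot U_n^+$ coincides with $\zeta_n^{\mathbf{Z}} \cdot U_n^+$, and $\phi(W \cdot U_n^+) = W^2$. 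One checks $\phi^{-1}(W^2) = W \cdot U_n^+$ directly (if $\phi(u) = \phi(w)$ with $w \in W$ then $u w^{-1} \in U_n^+$), so $\phi$ induces an injection $U_n/(\zeta_n^{\mathbf{Z}} \cdot U_n^+) \hookrightarrow W/W^2 \cong \mathbf{Z}/2\mathbf{Z}$. This already gives the general statement: the quotient has order $1$ or $2$, hence is a $2$-group of exponent dividing $2$.

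It remains to show that when $n$ is an odd prime the quotient is trivial, i.e.\ $\phi(U_n) \subseteq W^2$. Here I would reduce modulo the unique prime $\lambda = (1 - \zeta_n)$ above $n$, which is totally ramified with residue field $\mathbf{F}_n$. Since the residue field is $\mathbf{F}_n$, the decomposition group, and in particular $c$, acts trivially on it, so $u \equiv c(u) \pmod{\lambda}$ for every $u \in U_n$, giving $\phi(u) \equiv 1 \pmod{\lambda}$. Now $\zeta_n \equiv 1 \pmod{\lambda}$, so $W^2 = \langle \zeta_n \rangle$ reduces to $1$, whereas every element of $W \setminus W^2$ has the form $-\zeta_n^{\,j}$ and is therefore $\equiv -1 \not\equiv 1 \pmod{\lambda}$ (as $n$ is odd). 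Hence $\phi(u) \in W^2$ for all $u$, which forces $U_n = W \cdot U_n^+ = \zeta_n^{\mathbf{Z}} \cdot U_n^+$.

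The only genuinely delicate points are the well-definedness of $\phi$ (resting on Kronecker's theorem) and the congruence argument in the prime case; the rest is bookkeeping inside the cyclic group $W$. The main obstacle, such as it is, is keeping the identification $W \cdot U_n^+ = \zeta_n^{\mathbf{Z}} \cdot U_n^+$ straight, so that the conclusion is phrased with $\zeta_n^{\mathbf{Z}}$ as in the statement rather than with the full group of roots of unity.
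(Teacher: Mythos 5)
Your proof is correct, and its skeleton is the same as the paper's: both study the map $u \mapsto u/c(u)$, show its image lies in the group $W$ of roots of unity, identify $W$ as $(-1)^{\mathbf{Z}}\cdot\zeta_n^{\mathbf{Z}}$ of order $2n$, and conclude that the obstruction lives in a group of order at most $2$. The differences are in two sub-steps. First, to see that $u/c(u)$ is a root of unity you invoke Kronecker's theorem (all archimedean absolute values equal $1$), whereas the paper argues via Dirichlet's unit theorem that $U_n^-=\{x\in U_n,\ c(x)=x^{-1}\}$ is finite because $U_n$ and $U_n^+$ have the same rank and $U_n^-\cap U_n^+=\{\pm 1\}$; these are interchangeable standard devices. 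Second, and more substantially, for $n$ prime you rule out the class of $-\zeta_n^{\,j}$ by reducing modulo $\lambda=(1-\zeta_n)$: total ramification forces $c$ to act trivially on the residue field $\mathbf{F}_n$, so $u/c(u)\equiv 1\not\equiv -1 \pmod{\lambda}$. The paper instead argues by ramification of field extensions: if the quotient were nontrivial one would get an element $\gamma$ with $c(\gamma)=-\gamma$, hence $\mathbf{Q}(\zeta_n)=\mathbf{Q}(\zeta_n)^+(\sqrt{\gamma^2})$ with $\gamma^2$ a unit, an extension unramified outside $2$, contradicting total ramification at $n$. Your congruence argument is the classical one of Kummer (as in Washington, Prop.~1.5) and is arguably more elementary; the paper's version makes visible that the obstruction is a quadratic ramification phenomenon. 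Both are complete; the only point worth tightening in your write-up is the phrase about the ``decomposition group'' acting trivially --- what you actually use is that the residue field is the prime field $\mathbf{F}_n$ (equivalently, that inertia is everything), which is exactly the justification you give in parentheses.
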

\begin{proof}
Le cas $n$ premier impair est bien connu, mais nous n'avons pas trouvé de référence pour le cas $n$ impair quelconque. Nous reproduisons une preuve d'Emerton trouvée sur le site de discussion en ligne StackExchange.

Soit $c \in \text{Gal}(\mathbf{Q}(\zeta_n)/\mathbf{Q}(\zeta_n)^+)$ non trivial ($c$ est la conjugaison complexe).
Soient $\epsilon \in U_n$ et $u= \frac{\epsilon}{c(\epsilon)}$. Soit $U_n^{-} = \{x \in U_n, c(x) = x^{-1}\}$, on a alors $u \in U_n^{-}$. Le théorème des unités de Dirichlet montre que $U_n^+$ et $U_n$ ont le même rang. Comme $U_n^{-} \cap U_{n}^+ = \{1,-1\}$, le groupe $U_n^{-}$ est fini, donc tout élément de $U_n^-$ est une racine de l'unité dans $\mathbf{Q}(\zeta_n)$. Le groupe des racines de l'unités $\mu(\mathbf{Q}(\zeta_n))$ de $\mathbf{Q}(\zeta_n)$ est égal à $(-1)^{\mathbf{Z}}\cdot (\zeta_n)^{\mathbf{Z}}$. En effet, soit $m$ le cardinal de $\mu(\mathbf{Q}(\zeta_n))$. On a alors $2n \mid m$ et $\varphi(m)$ divise $\varphi(n)$ (car $\mathbf{Q}(\zeta_m)$ est un sous-corps de $\mathbf{Q}(\zeta_n)$). Cela implique que $m=2n$.
Par conséquent, $u = \pm \zeta_n^{\alpha}$ pour un $\alpha \in \mathbf{Z}$. Donc $u = \pm v^2$ pour un $v \in (\zeta_n)^{\mathbf{Z}}$. Soit $\gamma = \epsilon \cdot v^{-1}$. On a $\gamma = \pm c(\epsilon) \cdot v = \pm c(\gamma)$, donc $\gamma^2 \in U_n^+$, donc $\epsilon^2 \in U_n^{+} \cdot (\zeta_n)^{\mathbf{Z}}$, donc $U_n/(U_n^+ \cdot \zeta_n^{\mathbf{Z}})$ est d'exposant divisant $2$.

Si $n$ est premier, on a en fait $c(\gamma)=\gamma$ car sinon $c(\gamma) = - \gamma$ et dans ce cas $\gamma \not\in \mathbf{Q}(\zeta_n)^+$ mais $\gamma^2 \in \mathbf{Q}(\zeta_n)^+$, donc $\mathbf{Q}(\zeta_n) = \mathbf{Q}(\zeta_n)^+(\sqrt{u})$ est non ramifié en dehors de $2$, ce qui est impossible car si $n$ est premier, $\mathbf{Q}(\zeta_n)$ est totalement ramifié en $n$ sur $\mathbf{Q}$, donc sur $\mathbf{Q}(\zeta_n)^+$.
\end{proof}

Nous allons calculer le noyau $U_0(\omega^{-i})$ de la flèche $I_0(\omega^{-i}) \rightarrow I_1(\omega^{-i+1})$. Comme $\alpha_0(\omega^{-i})=1$, alors $\alpha_i(\chi_0) \leq 1$ est non nul seulement si $U_0(\omega^{-i})=0$. 

Soit $\varphi : \mathcal{C}_{K_1 }\rightarrow  \mathcal{C}_{K_1} / ((I_{S} \cdot\mathcal{C}_{K_1})\cdot  \mathcal{C}_{K_1}^p) = I_0$. La proposition suivante est due à Iimura si $i=1$. Nous reprenons sa démonstration dans le cas général. Notons que dans l'énoncé on ne suppose pas les conditions précédentes sur $i$.

\begin{prop}(\cite[Lemma $2.1$]{Iimura} pour le cas $i=1$)\label{kernel}
Soit $1 \leq i \leq p-1$.
Soit $\mathcal{C}_1 \subset \mathcal{C}_{K_1}$ le sous-groupe engendré par les classes d'idéaux $\mathfrak{a}$ tels que $\sigma(\mathfrak{a})=\mathfrak{a}$ (rappelons que $\sigma$ est un générateur de $S = Gal(K_1/\mathbf{Q}(\zeta_p))$). Alors 
$$\varphi(\mathcal{C}_1(\omega^{-i})) \subset U_0(\omega^{-i})$$
avec égalité si $\mathcal{C}_{\mathbf{Q}(\zeta_p)}(\omega^{-i+1}) = 0$ (ce qui est vrai si la conjecture de Vandiver est vraie ou si $i=1$) et $ (\mathbf{Z}[\zeta_p]^{\times} \cap \text{N}_{K_1/\mathbf{Q}(\zeta_p)}(K^{\times}))(\omega^{-i+1}) =   (\text{N}_{K_1/\mathbf{Q}(\zeta_p)}(\mathcal{O}_{K_1}^{\times}))(\omega^{-i+1})$ où $\mathcal{O}_{K_1}$ est le groupe des unités de $K_1$ (ce qui est vrai si $i=1$).
\end{prop}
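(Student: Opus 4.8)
Le plan est d'analyser explicitement la flèche $I_0(\omega^{-i}) \rightarrow I_1(\omega^{-i+1})$ donnée par la multiplication par $\Delta$ (via le fait~\ref{shift}), et de caractériser son noyau. Commençons par l'inclusion $\varphi(\mathcal{C}_1(\omega^{-i})) \subset U_0(\omega^{-i})$. Si $\mathfrak{a}$ est une classe d'idéaux vérifiant $\sigma(\mathfrak{a}) = \mathfrak{a}$, alors $([\sigma]-1) \cdot \mathfrak{a} = 1$ dans $\mathcal{C}_{K_1}$. Comme $\Delta$ est (à une unité près, et modulo des termes d'ordre supérieur) un générateur de l'idéal d'augmentation $I_S$ qui s'exprime à partir de $[\sigma]-1$, la multiplication par $\Delta$ envoie la classe de $\mathfrak{a}$ dans $J_2 = I_S^2 \cdot \mathcal{C}_{K_1} \cdot \mathcal{C}_{K_1}^p$, c'est-à-dire sur $1$ dans $I_1$. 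Donc $\varphi(\mathcal{C}_1(\omega^{-i}))$ est tué par $\Delta$ et gît dans le noyau $U_0(\omega^{-i})$. Cette partie devrait être essentiellement formelle une fois la compatibilité entre $\Delta$ et $[\sigma]-1$ rendue précise.

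L'égalité sous les hypothèses supplémentaires est le cœur de la preuve et demande un argument dans l'autre sens : montrer que toute classe dans $U_0(\omega^{-i})$ se relève en une classe fixée par $\sigma$. Je procéderais en relevant un élément $x$ de $U_0(\omega^{-i})$ en une classe $\tilde{x} \in \mathcal{C}_{K_1}$ ; dire que $x$ est dans le noyau de la multiplication par $\Delta$ signifie que $\Delta \cdot \tilde{x} \in J_2$, donc que $([\sigma]-1) \cdot \tilde{x}$ est, modulo $I_S^2 \cdot \mathcal{C}_{K_1} \cdot \mathcal{C}_{K_1}^p$, trivial. Il faut alors corriger $\tilde{x}$ par un élément de $I_S \cdot \mathcal{C}_{K_1}$ pour le rendre effectivement fixe par $\sigma$. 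C'est ici qu'interviennent les deux hypothèses : la condition $\mathcal{C}_{\mathbf{Q}(\zeta_p)}(\omega^{-i+1}) = 0$ garantit qu'il n'y a pas d'obstruction provenant du groupe des classes du corps cyclotomique dans la $\omega^{-i+1}$-partie (là où atterrit l'image par $\Delta$, par le décalage du fait~\ref{shift}), et la condition sur les normes d'unités $(\mathbf{Z}[\zeta_p]^{\times} \cap \text{N}_{K_1/\mathbf{Q}(\zeta_p)}(K^{\times}))(\omega^{-i+1}) = (\text{N}_{K_1/\mathbf{Q}(\zeta_p)}(\mathcal{O}_{K_1}^{\times}))(\omega^{-i+1})$ contrôle précisément l'ambiguïté provenant des unités lorsqu'on cherche à transformer une relation principale en un idéal fixe.

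Concrètement, je m'attends à ce que l'argument passe par la suite exacte~\eqref{suite_exacte_I} restreinte aux $\chi$-parties pertinentes et par un diagramme reliant $\mathcal{C}_1$, $\overline{\mathcal{C}}_{K_1}$ et $J$, en suivant l'idée de la proposition~\ref{sansnom1}. Il faudra traduire l'appartenance à $\text{N}_{K_1/\mathbf{Q}(\zeta_p)}(\mathcal{O}_{K_1}^{\times})$ en termes de relations entre idéaux fixes par $\sigma$, ce qui est naturel via le morphisme de norme et la suite exacte~\eqref{norme}. Le fait~\ref{shift} sera utilisé de façon essentielle pour suivre le décalage de $\omega^{-i}$ vers $\omega^{-i+1}$ à travers la multiplication par $\Delta$, et le lemme~\ref{delta} pour contrôler les puissances $p$-ièmes.

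**L'obstacle principal.** La difficulté réside dans l'inclusion réciproque, précisément dans le fait de montrer qu'une classe annulée par $\Delta$ modulo $J_2$ peut être effectivement représentée par un idéal $\sigma$-invariant, sans perte d'information dans la $\omega^{-i}$-partie. Il faut manier avec soin le passage entre « $([\sigma]-1)\cdot \tilde{x}$ est trivial modulo les puissances $p$-ièmes et $I_S^2 \cdot \mathcal{C}_{K_1}$ » et « $\tilde{x}$ se corrige en un élément $\sigma$-fixe », car c'est exactement à ce niveau que les obstructions galoisiennes (classes du corps cyclotomique et normes d'unités) se matérialisent ; c'est pourquoi les deux hypothèses additionnelles y sont nécessaires, et c'est leur usage conjoint qui constitue le point technique délicat de la démonstration.
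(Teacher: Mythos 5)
Votre esquisse suit la même stratégie générale que la démonstration du texte : l'inclusion directe est bien immédiate puisque $\Delta \in I_S$, et vous identifiez correctement les endroits où les deux hypothèses doivent intervenir. Mais elle reste un plan, et le cœur de l'argument --- que vous désignez vous-même comme « l'obstacle principal » --- n'est pas traité. Concrètement, la démonstration procède par une chaîne d'égalités entre sous-groupes intermédiaires : en posant $\mathcal{C}_1' = \{c,\ \sigma(c)=c\}$, $\mathcal{C}_1'' = \{c,\ \Delta\cdot c \in \delta\cdot\mathcal{C}_{K_1}\}$ et $\hat{\mathcal{C}}_1 = \{c,\ \Delta\cdot c \in \mathcal{C}_{K_1}^p\}$, on a $U_0 = \varphi(\hat{\mathcal{C}}_1)$, puis $\varphi(\hat{\mathcal{C}}_1) = \varphi(\mathcal{C}_1'')$ grâce au lemme~\ref{delta} (qui permet d'échanger $\mathcal{C}_{K_1}^p$ et $\delta\cdot\mathcal{C}_{K_1}$ modulo une perturbation par $\Delta^{p-2}\cdot c_1 \in I_S\cdot\mathcal{C}_{K_1}$, invisible pour $\varphi$), puis $\mathcal{C}_1''(\omega^{-i}) = \mathcal{C}_1'(\omega^{-i})$ grâce à $\mathcal{C}_{\mathbf{Q}(\zeta_p)}(\omega^{-i+1})=0$ (car $\Delta\cdot(e_{\omega^{-i}}\cdot c) = e_{\omega^{-i+1}}\cdot(\delta\cdot c')$ vit dans $\mathcal{C}_{\mathbf{Q}(\zeta_p)}(\omega^{-i+1})$), et enfin $\mathcal{C}_1(\omega^{-i}) = \mathcal{C}_1'(\omega^{-i})$ grâce à la condition sur les unités. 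Aucune de ces réductions n'est effectuée dans votre texte.

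Le point le plus sérieux est la dernière étape : la distinction entre les classes d'idéaux $\sigma$-invariants ($\mathcal{C}_1$, classes « fortement ambiguës ») et les classes $\sigma$-invariantes ($\mathcal{C}_1'$, classes ambiguës). Votre rédaction glisse de l'une à l'autre sans les distinguer, alors que c'est exactement là que se loge l'hypothèse sur les normes d'unités : il faut l'isomorphisme $(\mathcal{C}_1'/\mathcal{C}_1)(\omega^{-i}) \simeq (\mathbf{Z}[\zeta_p]^{\times}\cap \text{N}_{K_1/\mathbf{Q}(\zeta_p)}(K^{\times})/\text{N}_{K_1/\mathbf{Q}(\zeta_p)}(\mathcal{O}_{K_1}^{\times}))(\omega^{-i+1})$, qui est un résultat non trivial de type formule des classes ambiguës (ici \cite[Proposition 3 (i)]{Jaulent}) et qu'il faut citer ou redémontrer ; dire que la condition « contrôle l'ambiguïté provenant des unités » n'en tient pas lieu. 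De même, le passage de « $\Delta\cdot\tilde{x}\in J_2$ » à « $\tilde{x}$ se corrige en un représentant $\sigma$-invariant » ne s'obtient pas par une simple correction par un élément de $I_S\cdot\mathcal{C}_{K_1}$ : une telle correction ne donne a priori que $\Delta\cdot\tilde{x}'\in\mathcal{C}_{K_1}^p$, et tout le travail consiste précisément à passer de là à $\mathcal{C}_1$ via les trois lemmes ci-dessus. En l'état, l'égalité annoncée n'est donc pas établie.
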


\begin{proof}
L'inclusion $\varphi(\mathcal{C}_1(\omega^{-i})) \subset U_0(\omega^{-i})$ est immédiate car $\Delta \in I_S$. Montrons l'assertion sur le cas d'égalité. On suppose donc dans toute la suite de la démonstration que $$\mathcal{C}_{\mathbf{Q}(\zeta_p)}(\omega^{-i+1}) = 0$$ et que $$(\mathbf{Z}[\zeta_p]^{\times} \cap \text{N}_{K_1/\mathbf{Q}(\zeta_p)}(K^{\times})/ \text{N}_{K_1/\mathbf{Q}(\zeta_p)}(\mathcal{O}_{K_1}^{\times}))(\omega^{-i+1})=0 \text{ .}$$
Notons $$\mathcal{C}_1' = \{c \in \mathcal{C}_{K_1}, \Delta \cdot c=1\} =  \{c \in \mathcal{C}_{K_1}, \sigma(c)=c\}$$ et $$\mathcal{C}_1'' =  \{c \in \mathcal{C}_{K_1}, \Delta \cdot c \in  \delta \cdot \mathcal{C}_{K_1}\} \text{ .}$$ 
\begin{lem}
On a $\mathcal{C}_1'(\omega^{-i}) = \mathcal{C}_1''(\omega^{-i})$.
\end{lem}
\begin{proof}
Il s'agit de montrer que $\mathcal{C}_1''(\omega^{-i}) \subset \mathcal{C}_1'(\omega^{-i})$. Soit $c \in \mathcal{C}_{K_1}$ tel que $\Delta\cdot c = \delta\cdot c'$ avec $c' \in \mathcal{C}_{K_1}$. Alors $\Delta \cdot (e_{\omega^{-i}} \cdot c) = e_{\omega^{-i} \cdot \omega} \cdot( \Delta \cdot c )= e_{\omega^{-i} \cdot \omega}\cdot (\delta \cdot c') \in \mathcal{C}_{\mathbf{Q}(\zeta_p)}(\omega^{-i+1})$. Si $\mathcal{C}_{\mathbf{Q}(\zeta_p)}(\omega^{-i+1}) = 0$ alors on a $e_{\omega^{-i}} \cdot c \in \mathcal{C}_1'(\omega^{-i})$ et donc $\mathcal{C}_1''(\omega^{-i}) \subset \mathcal{C}_1'(\omega^{-i})$. \end{proof}

\begin{lem}
On a $\mathcal{C}_1(\omega^{-i}) = \mathcal{C}_1'(\omega^{-i})$.
\end{lem}
\begin{proof}
On sait, par \cite[Proposition $3$ (i)]{Jaulent} que $(\mathcal{C}_1'/\mathcal{C}_1)(\omega^{-i}) \simeq (\mathbf{Z}[\zeta_p]^{\times} \cap \text{N}_{K_1/\mathbf{Q}(\zeta_p)}(K^{\times})/ \text{N}_{K_1/\mathbf{Q}(\zeta_p)}(\mathcal{O}_{K_1}^{\times}))(\omega^{-i+1})$, ce dernier groupe étant trivial par hypothèse.
\end{proof}
Finissons la preuve de la proposition~\ref{kernel}. \\
Soit $\hat{\mathcal{C}}_1 = \{c \in \mathcal{C}_{K_1}, \Delta\cdot c \in \mathcal{C}_{K_1}^p\}$. On a $U_0=\varphi(\hat{\mathcal{C}}_1)$, donc $U_0(\omega^{-i}) = \varphi(\hat{\mathcal{C}}_1(\omega^{-i}))$.
\begin{lem}
On a $\varphi(\hat{\mathcal{C}}_1) = \varphi(\mathcal{C}_1'')$.
\end{lem}
\begin{proof}
Si $c \in \hat{\mathcal{C}}_1$, alors $\Delta \cdot c = c'^p$ pour un $c' \in \mathcal{C}_{K_1}$. Par le lemme~\ref{delta}, cela implique qu'il existe $c_1$ et $c_2$ dans $\mathcal{C}_{K_1}$ tels que $\Delta \cdot (c\cdot (\Delta^{p-2}\cdot c_1)) = c_2^{\delta}$, donc $\varphi(c) = \varphi(c\cdot (\Delta^{p-2}\cdot c_1)) \in \varphi(\mathcal{C}_1'')$. Réciproquement, si $c \in \mathcal{C}_1''$, alors $\Delta\cdot c = \delta\cdot c'$ pour un certain $c' \in \mathcal{C}_{K_1} $. Par le lemme~\ref{delta}, il existe $c_1$ et $c_2$ dans $\mathcal{C}_{K_1}$ tels que $\Delta\cdot (c \cdot (\Delta^{p-2}\cdot c_1)) = c_2^p$, donc $\varphi(c) = \varphi(c \cdot (\Delta^{p-2}\cdot c_1)) \in \varphi(\hat{\mathcal{C}}_1)$.
\end{proof}

La proposition~\ref{kernel} résulte du lemme précédent.
\end{proof}

\section{Un critère par les logarithmes d'unité (d'une somme de Gauss)}

Nous allons maintenant expliciter le terme $\varphi(\mathcal{C}_1(\omega^{-i}))$. Soit $\mathfrak{p}$ un idéal premier de $\mathbf{Q}(\zeta_p)$ au-dessus de $(N)$ (comme $N \equiv 1  \text{ (mod } p\text{)}$, $N$ est totalement décomposé dans $\mathbf{Q}(\zeta_p)$). Soit $\mathfrak{P}$ l'unique idéal premier au-dessus de $\mathfrak{p}$ dans $K_1$. Si $\mathfrak{a}$ est un idéal de $K_1$, soit $c(\mathfrak{a})$ sa classe dans le groupe des classes $C(K_1)$ de $K_1$.
Considérons la classe
$$\gamma := \prod_{t \in T} c(t(\mathfrak{P}))^{\omega^i(t)} \in \mathcal{C}_{K_1}(\omega^{-i}) $$
où $t(\mathfrak{P})$ est l'image de $\mathfrak{P}$ par l'élément $t$ de $T = Gal(K_1/K)$. Il est clair que $\mathcal{C}_1(\omega^{-i})$ est engendré par $\gamma$. 
On a :
$$\text{N}_{K_1/\mathbf{Q}(\zeta_p)}(\gamma) = \prod_{t \in T} c(t(\mathfrak{p}))^{\omega^i(t)} \in \mathcal{C}_{\mathbf{Q}(\zeta_p)}(\omega^{-i}) \text{.}$$
Comme par hypothèse sur $i$, on a  $\mathcal{C}_{\mathbf{Q}(\zeta_p)}(\omega^{-i}) = 0$, alors $\text{N}_{K_1/\mathbf{Q}(\zeta_p)}(\gamma)$ est trivial.

Comme $\mathbf{Z}_p$ est plat sur $\mathbf{Z}$, on a une suite exacte :
$$1 \rightarrow \mathbf{Z}[\zeta_p]^{\times} \otimes_{\mathbf{Z}} \mathbf{Z}_p \rightarrow \mathbf{Q}(\zeta_p)^{\times} \otimes_{\mathbf{Z}} \mathbf{Z}_p \rightarrow \mathcal{I} \otimes_{\mathbf{Z}} \mathbf{Z}_p \rightarrow C(\mathbf{Q}(\zeta_p)) \otimes_{\mathbf{Z}} \mathbf{Z}_p \rightarrow 1$$
où $C(\mathbf{Q}(\zeta_p))$ est le groupe des classes de $\mathbf{Q}(\zeta_p)$ et $\mathcal{I}$ est le groupe des idéaux fractionnaires de $\mathbf{Q}(\zeta_p)$.
L'élément $\prod_{t \in T} t(\mathfrak{p})^{\omega^i(t)}$ de $\mathcal{I} \otimes_{\mathbf{Z}} \mathbf{Z}_p$ a son image triviale dans $C(\mathbf{Q}(\zeta_p)) \otimes_{\mathbf{Z}} \mathbf{Z}_p$, donc il existe $u_{\omega^{-i}} \in (\mathbf{Q}(\zeta_p)^{\times} \otimes_{\mathbf{Z}} \mathbf{Z}_p)(\omega^{-i})$ tel que :
$$ \prod_{t \in T} t(\mathfrak{p})^{\omega^i(t)} = (u_{\omega^{-i}}) \text{ .}$$

Comme $\mathbf{Z}[\zeta_p]^{\times}(\omega^{-i}) = 0$ par le lemme~\ref{cyclotomic_unit} et le fait que $\omega^{-i}$ est impair, $u_{\omega^{-i}}$ est uniquement déterminé.

En fait $u_{\omega^{-i}}$ est lié à une somme de Gauss. 

Rappelons qu'on a fixé une racine primitive $N$-ième $\zeta_N \in \overline{\mathbf{Q}}$. Soit $\chi_N : (\mathbf{Z}/N\mathbf{Z})^{\times} \rightarrow \mathbf{Z}_N^{\times}$ le caractère de Teichmüller (qui vérifie $\chi_N(a) \equiv a \text{ (mod }N\text{)}$). On peut voir $\chi_N^{\frac{N-1}{p}}$ comme un caractère à valeurs dans $\mathbf{Z}[\zeta_p]$ par le choix de $\mathfrak{p}$.

Considérons la somme de Gauss : 
$$\mathcal{G} = -\sum_{a=1}^{N-1} \chi_N(a)^{\frac{-(N-1)}{p}} \cdot \zeta_N^a \in \mathbf{Q}(\zeta_p, \zeta_N)  \text{ .}$$
Rappelons qu'on a noté $K_0 = \mathbf{Q}(\zeta_p, \zeta_N^+)$. Par la théorie de Galois, $\mathcal{G}^p \in \mathbf{Q}(\zeta_p)$, donc $\mathcal{G} \in K_0$. Soit $\mathfrak{P}_0$ l'idéal premier au-dessus de $\mathfrak{p}$ dans $K_0$.

Le groupe $T = \text{Gal}(\mathbf{Q}(\zeta_p)/\mathbf{Q})$ s'identifie à $\text{Gal}(K_0/\mathbf{Q}(\zeta_N^+))$. Si $t \in T$, notons $\text{Rep}(t)$ l'unique entier $a \in \{1,2,...,p-1\}$ tel que $t = \sigma_a$ (\textit{i.e.} $t(\zeta_p) = \zeta_p^a$).

On a la factorisation en idéaux premiers dans $K_0$ : 
$$(\mathcal{G}) = \prod_{t \in T} t^{-1}(\mathfrak{P}_0)^{\text{Rep}(t)} \text{ .} $$
Soit $$\mathcal{G}_i = \prod_{t \in T} t(\mathcal{G})^{\omega^i(t)} \in (K_0^{\times} \otimes_{\mathbf{Z}} \mathbf{Z}_p)(\omega^{-i}) \text{ .}$$
Si $\mathfrak{Q}$ est un idéal premier de $K_0$, et $x \otimes a \in K_0^{\times} \otimes_{\mathbf{Z}} \mathbf{Z}_p$, on pose $v_{\mathfrak{Q}}(x \otimes a) = a\cdot v_{\mathfrak{Q}}(x) \in \mathbf{Z}_p$ (où $v_{\mathfrak{Q}}(x)$ est la valuation $\mathfrak{Q}$-adique de $x$). Cela induit une injection $K_0^{\times} \otimes_{\mathbf{Z}} \mathbf{Z}_p \rightarrow \prod_{\mathfrak{Q}} \mathbf{Z}_p$ où $\mathfrak{Q}$ décrit les idéaux premiers de $K_0$. Par platitude de $\mathbf{Z}_p$ sur $\mathbf{Z}$, le noyau est $\mathcal{O}_{K_0}^{\times} \otimes_{\mathbf{Z}} \mathbf{Z}_p$ où $\mathcal{O}_{K_0}$ est l'anneau des entiers de $K_0$. On a  $\mathcal{G}_i \in e_{\omega^{-i}}(K_0^{\times} \otimes_{\mathbf{Z}} \mathbf{Z}_p)$ (la $\omega^{-i}$-composante de $K_0^{\times} \otimes_{\mathbf{Z}} \mathbf{Z}_p$).

\begin{lem}
On a $e_{\omega^{-i}}(\mathcal{O}_{K_0}^{\times} \otimes_{\mathbf{Z}} \mathbf{Z}_p) = 0$. 
\end{lem}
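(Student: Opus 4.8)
Le plan est de se ramener au calcul de la partie \og moins \fg{} de $\mathcal{O}_{K_0}^{\times}\otimes_{\mathbf{Z}}\mathbf{Z}_p$ sous la conjugaison complexe. Le corps $K_0 = \mathbf{Q}(\zeta_p,\zeta_N^{+})$ est un corps CM : en effet $\zeta_N^{+}$ engendre l'unique sous-corps de degré $p$ de $\mathbf{Q}(\zeta_N)$, lequel est contenu dans $\mathbf{Q}(\zeta_N)^{+}$ (car $p$ impair divise $N-1$, donc divise $\frac{N-1}{2}$), et est ainsi totalement réel ; $K_0$ est donc le composé de $\mathbf{Q}(\zeta_p)$ et du corps totalement réel $F=\mathbf{Q}(\zeta_N^{+})$, de sous-corps totalement réel maximal $K_0^{+}=\mathbf{Q}(\zeta_p)^{+}\cdot F$. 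La conjugaison complexe $c$ s'identifie à $\sigma_{-1}\in T$ (elle envoie $\zeta_p$ sur $\zeta_p^{-1}$ et fixe $F$). Comme $i$ est impair, $\omega^{-i}(-1)=-1$, c'est-à-dire que $\omega^{-i}$ est impair ; en utilisant la relation générale $[\sigma_{-1}]\cdot e_{\chi}=\chi(-1)\cdot e_{\chi}$, on obtient $[\sigma_{-1}]\cdot e_{\omega^{-i}}=-e_{\omega^{-i}}$.

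Comme $p$ est impair, les idempotents $e^{\pm}=\frac{1\pm[\sigma_{-1}]}{2}$ décomposent $\mathcal{O}_{K_0}^{\times}\otimes_{\mathbf{Z}}\mathbf{Z}_p$ en parties plus et moins, et d'après ce qui précède $e_{\omega^{-i}}=e^{-}\cdot e_{\omega^{-i}}$. Il suffit donc de montrer que la partie moins $e^{-}(\mathcal{O}_{K_0}^{\times}\otimes_{\mathbf{Z}}\mathbf{Z}_p)$ est contenue dans le groupe $\mu(K_0)\otimes_{\mathbf{Z}}\mathbf{Z}_p$ des racines de l'unité. Pour cela je reprendrais l'argument de la preuve du lemme~\ref{cyclotomic_unit} : si $\epsilon\in\mathcal{O}_{K_0}^{\times}$, alors pour tout plongement complexe $\tau$ de $K_0$ on a $\tau(c(\epsilon))=\overline{\tau(\epsilon)}$ (propriété des corps CM), donc $\epsilon/c(\epsilon)$ a tous ses conjugués de module $1$ ; par le théorème de Kronecker, $\epsilon/c(\epsilon)$ est une racine de l'unité. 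Ainsi $(1-[\sigma_{-1}])\cdot(\mathcal{O}_{K_0}^{\times}\otimes_{\mathbf{Z}}\mathbf{Z}_p)\subset\mu(K_0)\otimes_{\mathbf{Z}}\mathbf{Z}_p$, et comme $2$ est inversible dans $\mathbf{Z}_p$, on en déduit $e^{-}(\mathcal{O}_{K_0}^{\times}\otimes_{\mathbf{Z}}\mathbf{Z}_p)\subset\mu(K_0)\otimes_{\mathbf{Z}}\mathbf{Z}_p$.

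Il reste à identifier $\mu(K_0)\otimes_{\mathbf{Z}}\mathbf{Z}_p$, c'est-à-dire le $p$-Sylow de $\mu(K_0)$. Comme $F\subset\mathbf{Q}(\zeta_N)$ est non ramifié en $p$ tandis que $\mathbf{Q}(\zeta_p)$ est totalement ramifié en $p$ d'indice $p-1$, l'indice de ramification de $p$ dans $K_0$ vaut $p-1$ ; or $\mathbf{Q}(\zeta_{p^2})$ est ramifié en $p$ d'indice $p(p-1)$, donc $\zeta_{p^2}\notin K_0$ et le $p$-Sylow de $\mu(K_0)$ est exactement $\mu_p=\langle\zeta_p\rangle$. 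Enfin $T$ agit sur $\mu_p$ par $\sigma_a(\zeta_p)=\zeta_p^{a}$, c'est-à-dire via le caractère $\omega$, de sorte que $\mu_p$ est sa propre composante $\omega$-isotypique et que $e_{\omega^{-i}}\cdot e_{\omega}=0$ donne $e_{\omega^{-i}}(\mu_p)=0$, puisque $\omega^{-i}\neq\omega$ (car $i\neq p-2$). En rassemblant, $e_{\omega^{-i}}(\mathcal{O}_{K_0}^{\times}\otimes_{\mathbf{Z}}\mathbf{Z}_p)=e_{\omega^{-i}}\bigl(e^{-}(\mathcal{O}_{K_0}^{\times}\otimes_{\mathbf{Z}}\mathbf{Z}_p)\bigr)\subset e_{\omega^{-i}}(\mu_p)=0$.

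Le point principal est l'annulation de la partie moins des unités d'un corps CM modulo les racines de l'unité, à savoir l'analogue du lemme~\ref{cyclotomic_unit} pour le corps $K_0$ ; tout le reste n'est qu'une vérification sur les caractères, où l'on se sert de façon essentielle de ce que $\omega^{-i}$ est impair (car $i$ impair) et distinct de $\omega$ (car $i\neq p-2$).
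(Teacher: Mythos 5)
Votre preuve est correcte, mais elle n'emprunte pas tout à fait le même chemin que le texte. L'article plonge $K_0$ dans $\mathbf{Q}(\zeta_{Np})$ : par platitude de $\mathbf{Z}_p$ sur $\mathbf{Z}$, la flèche $\mathcal{O}_{K_0}^{\times}\otimes_{\mathbf{Z}}\mathbf{Z}_p \rightarrow \mathcal{O}_{\mathbf{Q}(\zeta_{Np})}^{\times}\otimes_{\mathbf{Z}}\mathbf{Z}_p$ reste injective (et est compatible à l'action de $T$), et l'énoncé se déduit alors directement du lemme~\ref{cyclotomic_unit} appliqué à $n=Np$ : le quotient $U_n/(U_n^{+}\cdot\zeta_n^{\mathbf{Z}})$ disparaît après tensorisation par $\mathbf{Z}_p$, la partie réelle est tuée par $e_{\omega^{-i}}$ puisque $\omega^{-i}$ est impair, et $\zeta_{Np}^{\mathbf{Z}}\otimes_{\mathbf{Z}}\mathbf{Z}_p$ se réduit à $\mu_p\otimes_{\mathbf{Z}}\mathbf{Z}_p$, qui porte l'action de $T$ par $\omega\neq\omega^{-i}$. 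Vous travaillez au contraire directement dans $K_0$ : vous vérifiez que $K_0$ est un corps CM de conjugaison complexe $\sigma_{-1}$, vous refaites l'argument de Dirichlet--Kronecker pour montrer que la partie moins des unités est formée de racines de l'unité, puis vous déterminez le $p$-Sylow de $\mu(K_0)$ par un argument de ramification en $p$. Les deux ingrédients essentiels (annulation de la partie moins des unités d'un corps CM modulo les racines de l'unité, action de $T$ sur $\mu_p$ par $\omega$) sont donc les mêmes ; votre version est autonome mais reprouve en substance le lemme~\ref{cyclotomic_unit} pour le corps $K_0$, tandis que celle de l'article est plus courte puisqu'elle réutilise ce lemme déjà établi, au prix du petit détour par le plongement équivariant dans $\mathbf{Q}(\zeta_{Np})$. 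Dans les deux cas, les hypothèses $i$ impair et $i\neq p-2$ sont utilisées exactement aux mêmes endroits.
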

\begin{proof}
On a $K_0 \hookrightarrow \mathbf{Q}(\zeta_{Np})$. Par platitude de $\mathbf{Z}_p$ sur $\mathbf{Z}$, il suffit de montrer le même résultat avec $\mathbf{Q}(\zeta_{Np})$ à la place de $K_0$. Comme $\omega^{-i}$ est impair et que $\omega^{-i} \neq \omega$, cela découle du lemme~\ref{cyclotomic_unit}.
\end{proof}

Le résultat suivant est important car il nous permet de lier $u_{\omega^{-i}}$ à une somme de Gauss, qu'on calculera avec la formule de Gross--Koblitz.
\begin{prop}\label{unit_gauss}
On a $$u_{\omega^{-i}}^{B_{1, \omega^i}} = \mathcal{G}_i \text{ .}$$
\end{prop}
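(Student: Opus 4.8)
The plan is to prove the identity by comparing the two sides through their images under the valuation map $K_0^{\times} \otimes_{\mathbf{Z}} \mathbf{Z}_p \rightarrow \prod_{\mathfrak{Q}} \mathbf{Z}_p$, exploiting that on the $\omega^{-i}$-eigenspace this map is injective. Indeed, the lemma just proved gives $e_{\omega^{-i}}(\mathcal{O}_{K_0}^{\times} \otimes_{\mathbf{Z}} \mathbf{Z}_p) = 0$, and the kernel of the valuation map is exactly $\mathcal{O}_{K_0}^{\times} \otimes_{\mathbf{Z}} \mathbf{Z}_p$; hence its restriction to $e_{\omega^{-i}}(K_0^{\times} \otimes_{\mathbf{Z}} \mathbf{Z}_p)$ is injective. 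Both $u_{\omega^{-i}}^{B_{1,\omega^i}}$ and $\mathcal{G}_i$ lie in this eigenspace, so it suffices to check that they have the same valuation at every prime $\mathfrak{Q}$ of $K_0$. This reduces the whole statement to a computation of ideals.

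I would then localize the comparison to the primes above $N$. The defining relation $\prod_{t \in T} t(\mathfrak{p})^{\omega^i(t)} = (u_{\omega^{-i}})$ shows that $(u_{\omega^{-i}})$, and therefore $(u_{\omega^{-i}}^{B_{1,\omega^i}})$, is supported only on primes above $N$; likewise the factorization $(\mathcal{G}) = \prod_{t \in T} t^{-1}(\mathfrak{P}_0)^{\text{Rep}(t)}$ shows that $\mathcal{G}$, and hence $\mathcal{G}_i$, is supported only there. So both images vanish at every $\mathfrak{Q} \nmid N$, and it remains only to match the valuations at the $p-1$ primes $\{t(\mathfrak{P}_0)\}_{t \in T}$.

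The heart of the argument is then a Galois-reindexing computation. Since $T$ acts simply transitively on the primes above $N$ (there are $p-1$ of them, matching the order of $T$), I can read off from the factorization of $(\mathcal{G})$ that $v_{t(\mathfrak{P}_0)}(s(\mathcal{G})) = \text{Rep}(t^{-1}s)$, whence $v_{t(\mathfrak{P}_0)}(\mathcal{G}_i) = \sum_{s \in T} \omega^i(s)\cdot \text{Rep}(t^{-1}s)$. The substitution $s \mapsto ts$, together with $\text{Rep}(\sigma_a) = a$ and $\omega^i(\sigma_a) = \omega(a)^i$, collapses this convolution into $\omega^i(t)\cdot \sum_{a=1}^{p-1} a\,\omega^i(a) = \omega^i(t)\cdot B_{1,\omega^i}$, so the valuation vector of $\mathcal{G}_i$ is proportional to $t \mapsto \omega^i(t)$ with the single constant $B_{1,\omega^i}$. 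The same proportionality $t \mapsto \omega^i(t)$ holds for $(u_{\omega^{-i}})$ straight from its definition, and raising to the power $B_{1,\omega^i}$ furnishes the comparison on the other side.

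The main obstacle I anticipate is the bookkeeping in this last step: one must keep straight the two distinct $T$-actions — the permutation of the primes $t(\mathfrak{P}_0)$ on the one hand and the eigenspace weights $\omega^i(t)$ on the other — reindex the convolution so that the Stickelberger weights $\text{Rep}(t)$ reassemble into the single Bernoulli factor $B_{1,\omega^i}$, and, most delicately, pass correctly between the valuations in $\mathbf{Q}(\zeta_p)$ (where $u_{\omega^{-i}}$ naturally lives and $N$ is unramified) and those in $K_0$ across the totally ramified prime $\mathfrak{p}$, using $\mathfrak{p}\,\mathcal{O}_{K_0} = \mathfrak{P}_0^{p}$. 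It is precisely this ramification bookkeeping that pins down the exact exponent in the final identity; once it and the injectivity on the $\omega^{-i}$-eigenspace are in hand, the conclusion is formal.
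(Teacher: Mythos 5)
Your proposal follows the paper's own proof essentially verbatim: reduce to comparing valuations via the injectivity of the valuation map on the $\omega^{-i}$-eigenspace (the preceding lemma $e_{\omega^{-i}}(\mathcal{O}_{K_0}^{\times}\otimes_{\mathbf{Z}}\mathbf{Z}_p)=0$), observe that both sides are supported only at the primes above $N$, and match the valuations at each $t(\mathfrak{P}_0)$ to the common value $\sum_{\tau\in T}\omega^i(\tau)\cdot\text{Rep}(\tau t^{-1})$. The ramification bookkeeping you flag --- $\mathfrak{p}\,\mathcal{O}_{K_0}=\mathfrak{P}_0^{p}$ supplying the factor $p$ that converts $\sum_{a}a\,\omega^i(a)$ into $p\,B_{1,\omega^i}$ with the normalization $B_{1,\chi}=\frac{1}{p}\sum_a a\,\chi(a)$ used in the proof of la proposition~\ref{Stickelberger} --- is precisely the point the paper's one-line computation leaves implicit, and you have it right.
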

\begin{proof}
Soit $v = u_{\omega^{-i}}^{B_{1, \omega^i}}$.
Par le lemme précédent, il suffit de vérifier que pour tout $\mathfrak{Q}$ idéal premier de $K_0$ on a $v_{\mathfrak{Q}}(\mathcal{G}_i) = v_{\mathfrak{Q}}(v)$. 
On a $v_{\mathfrak{Q}}(\mathcal{G}_i) = v_{\mathfrak{Q}}(v) = 0$ sauf si $\mathfrak{Q} = t(\mathfrak{P}_0)$ pour un $t \in T$, auquel cas on a $v_{t(\mathfrak{P}_0)}(\mathcal{G}_i) =  v_{t(\mathfrak{P}_0)}(v)  =  \sum_{\tau \in T} \omega^i(\tau)\cdot Rep(\tau\cdot t^{-1})$.
\end{proof}

Définissons un logarithme discret de $\mathbf{Q}_N^{\times} \otimes_{\mathbf{Z}} \mathbf{Z}_p$ à valeur dans $\mathbf{Z}/p^{\nu}\mathbf{Z}$.
Soit $\Lambda : \mathbf{Q}_N^{\times} \otimes_{\mathbf{Z}} \mathbf{Z}_p \rightarrow \mathbf{Z}/p^{\nu}\mathbf{Z}$ défini par :
$$\Lambda(N^k \cdot u  \otimes_{\mathbf{Z}} a) = \overline{a}\cdot  \log(\overline{u})  \text{ (mod } p^{\nu}\text{)}$$
où $k \in \mathbf{Z}$, $u \in \mathbf{Z}_N^{\times}$, $\overline{u}$ et $\overline{a}$ désignent la réduction modulo $N$ et $p^{\nu}$ respectivement.

Comme $N$ est totalement décomposé dans $\mathbf{Q}(\zeta_p)$, la complétion $\mathbf{Q}(\zeta_p)_{\mathfrak{p}}$ de $\mathbf{Q}(\zeta_p)$ en $\mathfrak{p}$ est égale à $\mathbf{Q}_N^{\times}$.
Rappelons que $\varphi(\mathcal{C}_1(\omega^{-i})) \subset U_0(\omega^{-i})$ et que $U_0(\omega^{-i})$ est un sous-$\mathbf{F}_p$-espace vectoriel de $I_0(\omega^{-i})$ de dimension $\leq 1$ (car $I_0(\omega^{-i})$ est de dimension $\leq 1$).

La proposition suivante est le point essentiel de cette partie.

\begin{prop}\label{critere_zero}
On a 
$$\varphi(\mathcal{C}_1(\omega^{-i})) = 0$$
si et seulement si on a
$$\Lambda(\frac{u_{\omega^{-i}}}{N}) \equiv 0 \text{ (mod }p\text{)} \text{ .}$$
\end{prop}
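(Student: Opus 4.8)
The plan is to translate the condition $\varphi(\gamma)=0$ (where $\gamma$ generates $\mathcal{C}_1(\omega^{-i})$, so that $\varphi(\mathcal{C}_1(\omega^{-i}))=\langle\varphi(\gamma)\rangle$) into a purely local condition at the prime above $N$, and then to evaluate that local symbol explicitly. First I would record the arithmetic of the ideal defining $\gamma$. Since $N$ is totally ramified in $K_1/\mathbf{Q}(\zeta_p)$ we have $\mathfrak{p}\mathcal{O}_{K_1}=\mathfrak{P}^p$, and applying $t\in T$ gives $t(\mathfrak{p})\mathcal{O}_{K_1}=t(\mathfrak{P})^p$. Hence, writing $\mathfrak{A}=\prod_{t\in T}t(\mathfrak{P})^{\omega^i(t)}$, the defining relation $\prod_{t}t(\mathfrak{p})^{\omega^i(t)}=(u_{\omega^{-i}})$ yields $\mathfrak{A}^p=(u_{\omega^{-i}})$ in $K_1$, while $\mathrm{N}_{K_1/\mathbf{Q}(\zeta_p)}(\mathfrak{A})=\prod_t t(\mathfrak{p})^{\omega^i(t)}=(u_{\omega^{-i}})$ as an ideal of $\mathbf{Q}(\zeta_p)$ (because $\mathfrak{P}/\mathfrak{p}$ has residue degree $1$). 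In particular $v_{\mathfrak{p}}(u_{\omega^{-i}})=1=v_{\mathfrak{p}}(N)$.

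Next, by global class field theory I would identify $I_0=\mathcal{C}_{K_1}/((I_S\mathcal{C}_{K_1})\mathcal{C}_{K_1}^p)$ with $\mathrm{Gal}(M/K_1)$, where $M$ is the maximal elementary abelian $p$-extension of $K_1$ that is unramified over $K_1$ and abelian over $\mathbf{Q}(\zeta_p)$ (passing to $I_S$-coinvariants is exactly passing to the maximal subextension abelian over $\mathbf{Q}(\zeta_p)$). Under this identification $\varphi(\gamma)$ is the Artin symbol of $\mathfrak{A}$. Using the idèlic norm--compatibility of the reciprocity map together with $\mathrm{N}_{K_1/\mathbf{Q}(\zeta_p)}(\mathfrak{A})=(u_{\omega^{-i}})$ and the triviality of the Artin symbol of the principal idèle $u_{\omega^{-i}}$, the class $\varphi(\gamma)$ is expressed as a product of local norm--residue symbols over the ramified primes of $M/\mathbf{Q}(\zeta_p)$, namely the primes above $N$ and (possibly) above $p$. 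Here the standing hypotheses on $i$ do the crucial pruning: since $\mathbf{Z}[\zeta_p]^{\times}(\omega^{-i})=0$ and $\mathcal{C}_{\mathbf{Q}(\zeta_p)}(\omega^{-i})=0$ the unit ambiguity disappears, and since $\omega^{-i}\neq\omega$ the contribution of the primes above $p$ dies in the $\omega^{-i}$-eigenspace (the same phenomenon as in Proposition~\ref{genus}). Because $T$ permutes the primes above $N$ transitively with weights $\omega^i(t)$, the $\omega^{-i}$-projection collapses the product to a single symbol, giving
\[
\varphi(\gamma)=0 \iff (N,u_{\omega^{-i}})_{\mathfrak{p}}=1,
\]
the degree-$p$ Hilbert symbol over $\mathbf{Q}(\zeta_p)_{\mathfrak{p}}=\mathbf{Q}_N$ attached to the Kummer extension $K_{1,\mathfrak{P}}=\mathbf{Q}_N(N^{1/p})$. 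I expect this reduction to be the main obstacle: one must check that all local terms away from $\mathfrak{p}$ either vanish or, via the global product formula for the Hilbert symbol, recombine so that precisely the $\mathfrak{p}$-component survives in the $\omega^{-i}$-part, and one must keep the eigenspace bookkeeping consistent with $\gamma$ lying in $\mathrm{Gal}(M/K_1)$.

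Finally I would compute this symbol by the tame formula. As $u_{\omega^{-i}}/N$ is a $\mathfrak{p}$-unit, and since $(N,N)_{\mathfrak{p}}=1$ for odd $p$ (using $(N,-N)_{\mathfrak{p}}=1$), one has $(N,u_{\omega^{-i}})_{\mathfrak{p}}=(N,u_{\omega^{-i}}/N)_{\mathfrak{p}}=\overline{(u_{\omega^{-i}}/N)}^{\,(N-1)/p}\in\mu_p$. This is trivial if and only if $\overline{u_{\omega^{-i}}/N}$ is a $p$-th power in $\mathbf{F}_N^{\times}$, i.e. if and only if $\log(\overline{u_{\omega^{-i}}/N})\equiv 0 \pmod p$, which by the definition of $\Lambda$ is exactly $\Lambda(u_{\omega^{-i}}/N)\equiv 0 \pmod p$. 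Chaining the two equivalences gives the proposition. The orientation is the expected one: $\Lambda(u_{\omega^{-i}}/N)\equiv 0$ forces $\gamma$ to die in $I_0$, hence $U_0(\omega^{-i})=0$, which is precisely what permits $\alpha_i(\chi_0)=1$ and recovers the Calegari--Emerton inequality in the case $i=1$.
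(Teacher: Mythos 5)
Votre stratégie est essentiellement celle du papier : ramener la nullité de $\varphi(\gamma)$ à une condition locale en $\mathfrak{p}$ par la théorie du corps de classes, puis évaluer le symbole de Hilbert modéré $(u_{\omega^{-i}},N)_{\mathfrak{p}}$, ce qui donne exactement le critère $\Lambda(u_{\omega^{-i}}/N)\equiv 0 \pmod p$ (votre calcul final coïncide avec le lemme~\ref{Hilbert_symbol}). La seule différence est d'emballage : là où vous passez par le symbole d'Artin de $\mathfrak{A}$ dans $\mathrm{Gal}(M/K_1)$ et la formule du produit idélique, le papier introduit l'application $\hat{\chi}\colon J\to S^{(T)}/\hat{S}$ et invoque le théorème de Halter--Koch (lemme~\ref{Halter_Koch}) pour son injectivité. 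C'est précisément ce point qui résout ce que vous identifiez vous-même comme \og le principal obstacle \fg{} : le sens $(N,u_{\omega^{-i}})_{\mathfrak{p}}=1\Rightarrow\varphi(\gamma)=0$ exige de savoir que le $n$-uplet de symboles locaux détecte fidèlement $\psi(\gamma)$ dans la $\omega^{-i}$-partie, autrement dit que le noyau de $\oplus_{t}I_{t(\mathfrak{p})}\to \mathrm{Gal}$ se réduit à l'image des unités globales (suite exacte~\eqref{suite_Jaulent} de Jaulent), laquelle s'annule dans la composante $\omega^{-i}$ par le lemme~\ref{cyclotomic_unit} puisque $\omega^{-i}$ est impair et $\neq\omega$. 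Votre esquisse est donc correcte et complète modulo cette référence (ou une preuve directe de cette injectivité) ; le reste — la disparition de la contribution en $p$ pour $\chi\neq\omega$, la réduction à un seul symbole par transitivité de $T$, et le calcul $\overline{(u_{\omega^{-i}}/N)}^{(N-1)/p}$ — est conforme à l'argument du texte.
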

\begin{proof}
Comme $N_{K_1/\mathbf{Q}(\zeta_p)}(\gamma) = 1$, on $\gamma \in \overline{\mathcal{C}}_{K_1}(\omega^{-i})$ et  $\varphi(\mathcal{C}_1(\omega^{-i})) = \varphi(\gamma)^{\mathbf{Z}_p}$ est trivial si et seulement si $\psi(\gamma) \in J(\omega^{-i})$ est nul.

Pour $t \in T$, soit 
$$\chi_t : \mathbf{Q}(\zeta_p)_{t(\mathfrak{p})}  \rightarrow Gal((K_1)_{t(\mathfrak{P})}/\mathbf{Q}(\zeta_p)_{t(\mathfrak{p})}) = S$$
l'application d'Artin locale de la théorie locale du corps de classe. 
Soit
$$D : \mathbf{Q}(\zeta_p)^{\times} \rightarrow S^{(T)}$$
donnée par $x \mapsto (\chi_t(x))_{t \in T}$ et $\hat{S} := D(\mathbf{Z}[\zeta_p]^{\times}) \subset S^{(T)}$.
Notons que le noyau la restriction de $D$ à $\mathbf{Z}[\zeta_p]^{\times}$ est $\mathbf{Z}[\zeta_p]^{\times} \cap \text{N}_{K_1/\mathbf{Q}(\zeta_p)}(K_1^{\times})$. En effet, par le théorème des normes de Hasse pour une extension cyclique, un élément $x$ de $\mathbf{Z}[\zeta_p]^{\times}$ est une norme de $K_1$ si et seulement si c'est une norme localement en tous les idéaux premiers, si et seulement si c'est une norme en tous les idéaux premiers ramifiés (car $[K_1 : \mathbf{Q}(\zeta_p)] = p$ est premier), si et seulement si $D(x)=0$ par la théorie locale du corps de classe. En particulier, 
$$\hat{S} \simeq \mathbf{Z}[\zeta_p]^{\times}/\mathbf{Z}[\zeta_p]^{\times} \cap \text{N}_{K_1/\mathbf{Q}(\zeta_p)}(K_1^{\times}) \text{ .}$$

Soit $$\hat{\chi} : J \rightarrow S^{(T)}/\hat{S}$$
l'application suivante. Si $c \in J$, $N_{K_1/\mathbf{Q}(\zeta_p)}(c)=1$. Si on écrit $c = c(\mathfrak{a})$ comme la classe de la norme d'un idéal fractionnaire $\mathfrak{a}$ de $K_1$. Alors $N_{K_1/\mathbf{Q}(\zeta_p)}(\mathfrak{a})$ est principal, engendré par un élément $u \in \mathbf{Q}(\zeta_p)^{\times}$. On pose 
$$\hat{\chi}(\mathfrak{a}) = (\chi_t(u))_{t \in T} \text{ .}$$
Cela ne dépend pas du choix de $\mathfrak{a}$ car on a quotienté par $\hat{S}$.

Le groupe $S^{(T)}$ est un $T$-module naturel (si $\tau \in T$, $\tau\cdot (x_t)_{t \in T} := (x_{\tau \cdot t})_{t \in T}$).
L'application $D$ est $T$-équivariante. Donc $\hat{S}$ est un sous $T$-module de $S^{(T)}$, et $\hat{\chi}$ est $T$-équivariante.

\begin{lem}\label{Halter_Koch}\cite[Satz 1]{Halter_Koch}
L'application $\hat{\chi}$ est injective. C'est un isomorphisme si $\nu = 1$ (i.e. si $p \mid \mid N-1$ ou encore si $(1-\zeta_p)$ est ramifié dans $K_1$). 
\end{lem}

Le lemme~\ref{cyclotomic_unit} montre que $\hat{S}(\omega^{-i})=0$ (sous les hypothèses sur $i$). On a donc $\varphi(\gamma) = 0$ si et seulement si $\hat{\chi}(\omega^{-i})(u_{\omega^{-i}}) = 0$.

\begin{lem}\label{Hilbert_symbol}
On a $\hat{\chi}(\omega^{-i})(u_{\omega^{-i}}) = 0$ si et seulement si $\Lambda(\frac{u_{\omega^{-i}}}{N}) = 0 \text{ (mod }p\text{)}$.
\end{lem}
\begin{proof}
On a $\hat{\chi}(\omega^{-i})(u_{\omega^{-i}}) = 0$ si et seulement si $(u_{\omega^{-i}}, N)_{\mathfrak{p}}=1$ où si $a,b \in \mathbf{Q}(\zeta_p)_{\mathfrak{p}}^{\times}$, $(a,b)_{\mathfrak{p}} \in \mu_p$ désigne le symbole de Hilbert relativement au corps $\mathbf{Q}(\zeta_p)_{\mathfrak{p}}$. On a la formule classique : 
$$(a,b) = \Legendre((-1)^{v(a)v(b)} \cdot \frac{b^{v(a)}}{a^{v(b)}}, \mathfrak{p}) $$
où $v(a)$ et $v(b)$ désignent la valuation $\mathfrak{p}$-adique de $a$ et $b$ respectivement, et si $v(x)=0$, 
$$\Legendre(x, \mathfrak{m}) = x^{\frac{N-1}{p}} \text{(mod }\mathfrak{p} \text{) } \in \mu_p \subset \mathcal{\kappa}^{\times} $$
($\kappa = \mathbf{F}_N$ est le corps résiduel de $\mathbf{Q}(\zeta_p)$ en $\mathfrak{p}$). 
Cela achève la preuve du lemme~\ref{Hilbert_symbol} et de la proposition~\ref{critere_zero}.
\end{proof}
\end{proof}

La preuve de la proposition~\ref{critere_zero} donne le : 
\begin{lem}\label{j(w)}
On a $j(\omega)=0$.
\end{lem}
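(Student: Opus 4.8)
The plan is to split along the ramification of $K_1$ at $p$ over $\mathbf{Q}(\zeta_p)$. If $N \equiv 1 \text{ (mod } p^2\text{)}$, then Proposition~\ref{genus} already asserts $j(\omega) = 0$, so there is nothing to do. Assume therefore $N \not\equiv 1 \text{ (mod } p^2\text{)}$, i.e. $\nu = 1$. In this case Lemme~\ref{Halter_Koch} tells us that $\hat{\chi} : J \to S^{(T)}/\hat{S}$ is an \emph{isomorphism} of $T$-modules, so applying the idempotent $e_{\omega}$ yields $J(\omega) \cong (S^{(T)}/\hat{S})(\omega) = (S^{(T)})(\omega)/\hat{S}(\omega)$. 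It thus suffices to prove that $\hat{S}(\omega)$ fills up the space $(S^{(T)})(\omega)$.

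First I would compute $(S^{(T)})(\omega)$. Since $S \cong \mathbf{F}_p$ as an abelian group and $T$ acts on $S^{(T)} = \bigoplus_{t \in T} S$ only by permuting the indices, $S^{(T)}$ is the regular representation $\mathbf{F}_p[T]$; as $\mathbf{F}_p$ contains the $(p-1)$-st roots of unity, each character of $T$ occurs exactly once, whence $\dim_{\mathbf{F}_p}(S^{(T)})(\omega) = 1$. Because $D$ is $T$-équivariante we have $\hat{S}(\omega) = D(\mathbf{Z}[\zeta_p]^{\times}(\omega))$, and by Lemme~\ref{cyclotomic_unit} (together with the fact that the real units $U_p^+$ are fixed by complex conjugation $\sigma_{-1}$, hence lie in the even eigenspaces, while $\omega$ is odd) the group $\mathbf{Z}[\zeta_p]^{\times}(\omega)$ is generated by $\zeta_p$. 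So everything reduces to showing $D(\zeta_p) \neq 0$, i.e. that $\zeta_p$ is \emph{not} a norm from $K_1$.

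For this last point, which is the crux of the argument, I would invoke the Hasse norm theorem for the cyclic extension $K_1/\mathbf{Q}(\zeta_p)$: it is enough to exhibit one prime where $\zeta_p$ fails to be a local norm. Take a prime $\mathfrak{p} \mid N$ and evaluate the Hilbert symbol $(\zeta_p, N)_{\mathfrak{p}}$ with the formula recalled in Lemme~\ref{Hilbert_symbol}. Since $v_{\mathfrak{p}}(\zeta_p) = 0$ and $v_{\mathfrak{p}}(N) = 1$, it equals $\Legendre(\zeta_p^{-1}, \mathfrak{p}) = \zeta_p^{-\frac{N-1}{p}} \text{ (mod } \mathfrak{p}\text{)}$. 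In $\mathbf{Z}[\zeta_p]/\mathfrak{p} \cong \mathbf{F}_N$ the image of $\zeta_p$ has exact order $p$, and because $\nu = 1$ the integer $\frac{N-1}{p}$ is prime to $p$; hence this power is a \emph{nontrivial} $p$-th root of unity, so $(\zeta_p, N)_{\mathfrak{p}} \neq 1$ and $\zeta_p$ is not a local norm at $\mathfrak{p}$. Therefore $D(\zeta_p) \neq 0$, $\hat{S}(\omega) = (S^{(T)})(\omega)$, and finally $J(\omega) = 0$, i.e. $j(\omega) = 0$. The main obstacle is precisely this tame-symbol computation: it is what distinguishes the case $\nu = 1$ (where $\zeta_p$ fails to be a norm) from the case $\nu \geq 2$ (where $\zeta_p$ \emph{is} a norm, the symbol becoming trivial, and one must instead fall back on Proposition~\ref{genus}).
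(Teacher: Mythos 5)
Your proof is correct and takes essentially the same route as the paper: for $\nu>1$ both quote la proposition~\ref{genus}, and for $\nu=1$ both reduce to showing that $\zeta_p$ (which generates $\mathbf{Z}[\zeta_p]^{\times}(\omega)$ by le lemme~\ref{cyclotomic_unit}) is not a norm from $K_1$, settled by the same tame Hilbert-symbol computation at a prime above $N$ ($\zeta_p$ is a $p$-th power modulo $N$ si et seulement si $\nu>1$). The only cosmetic difference is that you pass through the Halter--Koch isomorphism $J \simeq S^{(T)}/\hat{S}$ instead of quoting the genus formula $j(\omega)=1-n(\omega)$; the two reductions are equivalent.
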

\begin{proof}
On sait que c'est vrai si $\nu>1$ par la proposition~\ref{genus}.

Si $\nu=1$, il s'agit de voir que $\zeta_p \not\in \mathbf{Z}[\zeta_p]^{\times} \cap \text{N}_{K_1/\mathbf{Q}(\zeta_p)}(K_1^{\times})$. On a vu dans la preuve de la proposition~\ref{critere_zero} que $\zeta_p \in  \mathbf{Z}[\zeta_p]^{\times} \cap \text{N}_{K_1/\mathbf{Q}(\zeta_p)}(K_1^{\times})$ si et seulement si $D(\zeta_p)=0$ si et seulement si $\Lambda(\zeta_p)=0$, si et seulement si $\zeta_p$ est une puissance $p$-ième modulo $N$, ce qui est le cas si et seulement si $\nu>1$.
\end{proof}

Pour montrer le théorème~\ref{class_group}, il suffit de montrer le lemme~\ref{j(w)} ci-dessous et l'identité :
\begin{thm}\label{identite_u}
On a : 
$$\Lambda(\frac{{u}_{\omega^{-i}}}{N}) \equiv \frac{1}{B_{1,\omega^i}} \cdot \left( \sum_{k=1}^{N-1} \left( \sum_{a=1}^{k-1} \omega^i(a)\right)\cdot  \log(k)\right)\text{ (mod }p^{\nu}\text{)} \text{ .}$$
\end{thm}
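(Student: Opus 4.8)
Le plan est de se ramener, grâce à la proposition~\ref{unit_gauss}, au calcul du logarithme discret du résidu de la somme de Gauss $\mathcal{G}_i$ modulo $\mathfrak{P}_0$, puis d'expliciter ce résidu par la formule de Gross--Koblitz et les identités de la cinquième partie sur la fonction Gamma $N$-adique. Sous les hypothèses courantes sur $i$, le nombre $B_{1,\omega^i}$ est une unité $p$-adique (car $B_{1,\omega^i}\equiv \frac{B_{i+1}}{i+1}\pmod p$ et $\mathcal{C}_{\mathbf{Q}(\zeta_p)}(\omega^{-i})=0$ par Herbrand--Ribet), donc inversible modulo $p^{\nu}$. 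La proposition~\ref{unit_gauss} fournit l'égalité $u_{\omega^{-i}}^{B_{1,\omega^i}} = \mathcal{G}_i$ dans $K_0^{\times}\otimes_{\mathbf{Z}}\mathbf{Z}_p$. Comme le corps résiduel de $K_0$ en $\mathfrak{P}_0$ est $\mathbf{F}_N$, comme celui de $\mathbf{Q}(\zeta_p)$ en $\mathfrak{p}$, et comme $\Lambda$ est un morphisme de groupes s'annulant sur les puissances d'uniformisante, l'application de $\Lambda$ à cette égalité donne
$$B_{1,\omega^i}\cdot \Lambda\left(\frac{u_{\omega^{-i}}}{N}\right) \equiv \log\left(\overline{\mathcal{G}_i}\right) \pmod{p^{\nu}}$$
où $\overline{\mathcal{G}_i}\in\mathbf{F}_N^{\times}$ désigne la réduction modulo $\mathfrak{P}_0$ de $\mathcal{G}_i$ divisé par la puissance d'uniformisante donnée par sa valuation (déjà explicitée dans la proposition~\ref{unit_gauss}). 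Il suffit donc de prouver que $\log(\overline{\mathcal{G}_i}) \equiv \sum_{k=1}^{N-1}\left(\sum_{a=1}^{k-1}\omega^i(a)\right)\log(k)\pmod{p^{\nu}}$.

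La somme $\mathcal{G}$ est, au signe près, la somme de Gauss attachée au caractère $\chi_N^{-(N-1)/p}$ d'ordre $p$ de $\mathbf{F}_N^{\times}$, et $\sigma_c(\mathcal{G})$ celle attachée à $\chi_N^{-c(N-1)/p}$. La formule de Gross--Koblitz (\cite[Theorem $1.7$]{Gross_Koblitz}) exprime chacune de ces sommes, $N$-adiquement, comme le produit d'une puissance d'une uniformisante de Dwork $\pi$ (vérifiant $\pi^{N-1}=-N$) et d'une valeur de la fonction Gamma $N$-adique $\Gamma_N$ en un argument rationnel de dénominateur $p$. Dans le produit pondéré $\prod_t t(\mathcal{G})^{\omega^i(t)}$ définissant $\mathcal{G}_i$, les puissances de $\pi$ reconstituent exactement la valuation de Stickelberger de la proposition~\ref{unit_gauss}, de sorte que $\overline{\mathcal{G}_i}$ est la réduction modulo $\mathfrak{P}_0$ du produit correspondant de valeurs de $\Gamma_N$, à un signe près.

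Il reste à calculer $\log(\overline{\mathcal{G}_i})$. En utilisant la relation $\Gamma_N(n)=(-1)^n\prod_{0<k<n,\,N\nmid k}k$ et la continuité $N$-adique de $\Gamma_N$ — c'est précisément le rôle des identités de la cinquième partie reliant $\Gamma_N$ aux éléments de Stickelberger — la réduction modulo $N$ de chaque valeur de $\Gamma_N$ s'écrit comme un produit d'entiers $k$, donc son logarithme discret devient une somme de termes $\log(k)$. En permutant ensuite la sommation sur les conjugués $\sigma_c$ et sur les entiers $k$, les poids $\omega^i(c)$ se regroupent en les sommes partielles $\sum_{a=1}^{k-1}\omega^i(a)$, ce qui produit la somme voulue.

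Le point le plus délicat est cette dernière étape : il faut apparier rigoureusement le logarithme discret du résidu des valeurs de $\Gamma_N$ avec l'image de l'élément de Stickelberger $\sum_k\left(\sum_{a<k}\omega^i(a)\right)[k]$, en contrôlant simultanément le choix de normalisation de $\pi$ et de $\log$, les contributions de signe $\log(-1)$ (qui s'annulent car $\omega^i$ est impair), et la précision modulo $p^{\nu}$ (et non simplement modulo $p$) — cette dernière étant assurée par le caractère exact de la formule de Gross--Koblitz.
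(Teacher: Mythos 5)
Votre démonstration suit essentiellement la même voie que celle de l'article : réduction à $\mathcal{G}_i$ via la proposition~\ref{unit_gauss}, application de la formule de Gross--Koblitz pour obtenir $u_{\omega^{-i}}^{B_{1,\omega^i}} = (-N)^{B_{1,\omega^i}}\cdot\prod_{a=1}^{p-1}\Gamma_N(\frac{a}{p})^{\omega^i(a)}$, puis identification du logarithme du produit de valeurs de $\Gamma_N$ avec $\mathcal{L}(\phi_{\omega^{-i}})$. L'étape finale que vous qualifiez de \emph{délicate} est exactement le contenu de la proposition~\ref{Gamma}, déjà établie dans la cinquième partie, de sorte que votre argument est correct et conforme à celui du texte.
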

Nous montrons cette identité dans la dernière partie en utilisant la formule de Gross--Koblitz, en même temps que les diverses identités annoncées dans l'introduction.
\\ \\
Traitons le cas $p=5$. Dans ce cas, $\mathbf{Q}(\zeta_5)$ est principal, son groupe des unités est $E= (-\zeta_5)^{\mathbf{Z}} \cdot (1+\zeta_5)^{\mathbf{Z}}$. Notons que $E\otimes_{\mathbf{Z}} \mathbf{Z}_5 = \mathbf{Z}_5(2)$ (c'est à dire que l'action de $T = \text{Gal}(\mathbf{Q}(\zeta_5)/\mathbf{Q})$ est donnée par $\omega^2$). En effet, $1+\zeta_5^2 = -\zeta_5^{-1}\cdot (1+\zeta_5)^{-1}$ et $-1 \equiv 2^2 \text{ (mod } 5\text{)}$.

On a, par le lemme~\ref{decomposition}, $r_K = \sum_{i=0}^{3} \alpha_i(\chi_0)$ avec $\alpha_i(\chi_0) \in \{0,1\}$ et $\alpha_i(\chi_0) \leq \alpha_0(\omega^{-i})$. On a vu que $\alpha_0(\chi_0)=1$, et que $\alpha_1(\chi_0)=1$ si et seulement si $\sum_{k=1}^{\frac{N-1}{2}} k \cdot \text{log}(k) \equiv 0 \text{ (mod }p \text{)}$.

On a $\alpha_3(\chi_0) = \alpha_0(\omega) = 0$.
En effet, il suffit de montrer que $\alpha_0(\omega) = 0$ car $\alpha_3(\chi_0) \leq  \alpha_0(\omega^{-3}) =  \alpha_0(\omega)$. Cela découle du lemme~\ref{j(w)}.

Il reste à étudier $\alpha_2(\chi_0) \in \{0,1\}$. Comme $\alpha_2(\chi_0)\leq \alpha_0(\omega^2)$, une condition nécessaire pour que $\alpha_2(\chi_0) = 1$ est que $\alpha_0(\omega^2)=1$, \textit{i.e.} que $\mathbf{Z}[\zeta_5]^{\times}(\omega^2) = (\mathbf{Z}[\zeta_5]^{\times} \cap \text{N}_{K_1/\mathbf{Q}(\zeta_p)}(K_1^{\times}))(\omega^2)$ par la proposition~\ref{genus}. Cette dernière condition est équivalente à $$\sum_{a=1}^{4} \omega^2(a)\cdot \text{log}(\sigma_a(1+\zeta_5)) \equiv 0 \text{ (mod }p \text{)} $$
par la preuve de la proposition~\ref{critere_zero}.

Supposons cette condition réalisée. On a alors $I_0(\omega^2) \simeq \mathbf{F}_5$. La proposition~\ref{kernel} pour $i=2$ et la proposition~\ref{critere_zero} (dont la preuve s'adapte pour $i=2$ car $\hat{S}(\omega^{-i})=0$ par la condition précédente) montrent que le noyau de la flèche $I_0(\omega^2) \rightarrow I_1(\omega^3)$ est nul si et seulement si 
$$\sum_{a=2}^{4} (a^2-1)\cdot \emph{log}(\sigma_a(u)) \equiv 0 \text{ (mod } 5 \text{)}$$
où $(u)$ est un idéal premier au-dessus de $N$ dans $\mathbf{Q}(\zeta_5)$ (cette dernière quantité ne dépend pas du choix de $u$ car $\sum_{a=1}^{4} (a^2-1)\cdot \text{log}(\sigma_a(1+\zeta_5)) \equiv 0 \text{ (mod }p \text{)}$). 

Cela achève la preuve du théorème~\ref{p=5}. Nous voyons que la difficulté pour déterminer complètement $r_K$  dans le cas $p=5$ est de calculer le noyau de la flèche $I_1(\omega^3) \rightarrow I_2(\chi_0)$. Nous conjecturons que ce noyau est nul si et seulement si $\sum_{k=1}^{\frac{N-1}{2}} k\cdot \text{log}(k) \equiv0 \text{ (mod } 5 \text{)}$ (quand les conditions $I_0(\omega) = I_1(\omega^2) \simeq \mathbf{F}_p$ sont satisfaites). Cela impliquerait la réciproque du théorème~\ref{CE} pour $p=5$, \textit{i.e.} que $r_K>1$ si et seulement si $\sum_{k=1}^{\frac{N-1}{2}} k\cdot \text{log}(k) \equiv0\text{ (mod } 5 \text{)}$.

\section{Etude de $r_{K_0}$}

\begin{thm}\label{critere_r_0}
On a $\sum_{k=1}^{\frac{N-1}{2}} k\cdot  \log(k) \equiv 0 \text{ (mod }p\text{)}$ si et seulement si $r_{K_0}(\omega^{-1})>1$.
\end{thm}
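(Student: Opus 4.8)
Le plan est de transposer la machinerie d'Iimura et Jaulent développée plus haut à l'extension cyclique $K_0/\mathbf{Q}(\zeta_p)$ de degré $p$ (ramifiée en $N$), en se plaçant dans la $\omega^{-1}$-composante. Notons $S_0 = \text{Gal}(K_0/\mathbf{Q}(\zeta_p)) \simeq \mathbf{Z}/p\mathbf{Z}$, fixons un générateur $\sigma_0$ de $S_0$, et identifions comme dans l'énoncé $T = \text{Gal}(\mathbf{Q}(\zeta_p)/\mathbf{Q})$ à $\text{Gal}(K_0/\mathbf{Q}(\zeta_N^+))$. On munit $\mathcal{C}_{K_0}$ de la filtration $J_i = (I_{S_0}^i\cdot \mathcal{C}_{K_0})\cdot \mathcal{C}_{K_0}^p$, on pose $I_i = J_i/J_{i+1}$ et $\alpha_i(\chi) = \dim_{\mathbf{F}_p} e_{\chi}(I_i)$. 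Comme $\mathbf{F}_p[S_0]$ est un anneau local d'idéal maximal nilpotent $I_{S_0}$, la filtration de $\mathcal{C}_{K_0}/p$ par les $I_{S_0}^i$ est épuisante, d'où (en appliquant l'idempotent $e_{\omega^{-1}}$, exact car $p \nmid \#T$) l'égalité $r_{K_0}(\omega^{-1}) = \sum_{i \geq 0} \alpha_i(\omega^{-1})$, prouvée comme la proposition~\ref{decomposition} à l'aide de l'analogue du lemme~\ref{delta} pour $S_0$. La différence cruciale avec $K_1$ est que $K_0/\mathbf{Q}$ \emph{est abélienne} : $T$ agit trivialement par conjugaison sur $S_0$, donc $\sigma_0-1$ engendre $I_{S_0}$ de façon $T$-invariante. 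La multiplication par $\sigma_0-1$ fournit alors une suite exacte \emph{sans torsion par $\omega$}
$$1 \rightarrow U_i(\omega^{-1}) \rightarrow I_i(\omega^{-1}) \rightarrow I_{i+1}(\omega^{-1}) \rightarrow 1 \text{ ,}$$
de sorte que $\alpha_{i+1}(\omega^{-1}) \leq \alpha_i(\omega^{-1})$ pour tout $i$.

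On calcule ensuite $\alpha_0(\omega^{-1})$ par la théorie du genre. Dans $K_0/\mathbf{Q}(\zeta_p)$ seuls les idéaux au-dessus de $N$ se ramifient : contrairement à $K_1$, il n'y a jamais de ramification en $(1-\zeta_p)$ (car $\mathbf{Q}(\zeta_N^+)/\mathbf{Q}$ est non ramifiée en $p$), mais cela n'affecte que la composante $\omega$ et reste donc sans incidence ici. Les analogues des propositions~\ref{genus} et~\ref{sansnom1} donnent $\alpha_0(\omega^{-1}) = r_{\mathbf{Q}(\zeta_p)}(\omega^{-1}) + j(\omega^{-1}) - \dim_{\mathbf{F}_p}\psi(e_{\omega^{-1}}\cdot \overline{\mathcal{C}}_{\mathbf{Q}(\zeta_p)})$. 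Comme $\mathcal{C}_{\mathbf{Q}(\zeta_p)}(\omega^{-1}) = 0$ (Herbrand--Ribet, puisque $p \nmid B_2 = \frac{1}{6}$) et $\mathbf{Z}[\zeta_p]^{\times}(\omega^{-1}) = 0$ (lemme~\ref{cyclotomic_unit}, $\omega^{-1}$ étant impair et $\neq \omega$), on obtient $n(\omega^{-1}) = 0$ puis $\alpha_0(\omega^{-1}) = j(\omega^{-1}) = 1$. Ceci établit déjà $r_{K_0}(\omega^{-1}) \geq 1$, et joint au paragraphe précédent :
$$r_{K_0}(\omega^{-1}) > 1 \iff \alpha_1(\omega^{-1}) = 1 \iff U_0(\omega^{-1}) = 0 \text{ .}$$

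Il reste à déterminer le noyau $U_0(\omega^{-1})$ de $I_0(\omega^{-1}) \rightarrow I_1(\omega^{-1})$, en reprenant les propositions~\ref{kernel} et~\ref{critere_zero} (les hypothèses requises y étant satisfaites grâce à $\mathcal{C}_{\mathbf{Q}(\zeta_p)}(\omega^{-1}) = 0$ et $\hat{S}(\omega^{-1}) = 0$ ; noter que l'absence de torsion par $\omega$ fait porter les conditions sur la composante $\omega^{-1}$ elle-même). Le générateur $\gamma_0 = \prod_{t \in T} c(t(\mathfrak{P}_0))^{\omega(t)}$ de $\mathcal{C}_1(\omega^{-1})$ a une norme triviale dans $\mathcal{C}_{\mathbf{Q}(\zeta_p)}(\omega^{-1}) = 0$, et la nullité de $\psi(\gamma_0)$ se lit, via l'injectivité de l'application de Halter--Koch (lemme~\ref{Halter_Koch}), sur la condition que $u_{\omega^{-1}}$ soit une norme locale en $\mathfrak{p}$ pour $K_0/\mathbf{Q}(\zeta_p)$. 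Or $K_0 = \mathbf{Q}(\zeta_p)(\mathcal{G})$ avec $\mathcal{G}^p \in \mathbf{Q}(\zeta_p)$, donc cette extension est, par Kummer, engendrée par $g_0 := \mathcal{G}^p$, dont on vérifie $v_{\mathfrak{p}}(g_0) = 1$ grâce à la factorisation $(\mathcal{G}) = \prod_{t} t^{-1}(\mathfrak{P}_0)^{\text{Rep}(t)}$. Le calcul du symbole de Hilbert modéré $(u_{\omega^{-1}}, g_0)_{\mathfrak{p}}$ comme dans le lemme~\ref{Hilbert_symbol} ramène finalement $U_0(\omega^{-1}) = 0$ à la congruence $\Lambda(u_{\omega^{-1}}/g_0) \equiv 0 \pmod{p}$.

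La difficulté principale sera alors d'identifier cette condition à $\Lambda(u_{\omega^{-1}}/N) \equiv 0 \pmod{p}$. Comme $\Lambda$ est un morphisme, $\Lambda(u_{\omega^{-1}}/g_0) = \Lambda(u_{\omega^{-1}}/N) - \Lambda(g_0/N)$, et il suffit donc de prouver $\Lambda(g_0/N) = \Lambda(\mathcal{G}^p/N) \equiv 0 \pmod{p}$ : autrement dit, que $\mathcal{G}^p$ et $N$ coïncident modulo les puissances $p$-ièmes de $\mathbf{F}_N^{\times}$, ou encore que $K_0$ et $K_1$ définissent la même extension locale en $\mathfrak{p}$. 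C'est exactement une congruence sur la somme de Gauss modulo $N$, que l'on obtiendra par la formule de Gross--Koblitz (le même ingrédient servant à démontrer le théorème~\ref{identite_u}, et le lien $u_{\omega^{-1}}^{B_{1,\omega}} = \mathcal{G}_1$ de la proposition~\ref{unit_gauss} y étant central). Une fois ceci acquis, le théorème~\ref{identite_u} pour $i=1$ donne $\Lambda(u_{\omega^{-1}}/N) \equiv \frac{1}{B_{1,\omega}}\cdot \mathcal{S}_1 \pmod{p}$, et le lemme~\ref{B_2} fournit $\mathcal{S}_1 = \frac{-2}{3}\cdot \sum_{k=1}^{\frac{N-1}{2}} k\cdot \log(k) \pmod{p}$ ; comme $B_{1,\omega} \equiv \frac{B_2}{2} \equiv \frac{1}{12}$ est inversible modulo $p$, on conclut que $r_{K_0}(\omega^{-1}) > 1$ si et seulement si $\sum_{k=1}^{\frac{N-1}{2}} k\cdot \log(k) \equiv 0 \pmod{p}$.
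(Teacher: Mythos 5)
Votre démonstration suit essentiellement la même route que celle de l'article : même filtration par $I_{S_0}^i$, même observation-clé que le caractère abélien de $K_0/\mathbf{Q}$ supprime la torsion par $\omega$ dans la suite exacte, même calcul de $\alpha_0(\omega^{-1})=1$ via la formule des genres et Herbrand--Ribet, et même réduction du noyau $U_0(\omega^{-1})$ à la condition $\Lambda(u_{\omega^{-1}}/\mathcal{G}^p)\equiv 0$ par le symbole de Hilbert modéré, conclue par le théorème~\ref{identite_u}. La seule divergence, mineure, est la justification de $\Lambda(\mathcal{G}^p/N)\equiv 0$ : vous invoquez Gross--Koblitz (ce qui marche, puisque $\mathcal{G}^p=\pi^{N-1}\Gamma_N(\frac{1}{p})^p=-N\cdot\Gamma_N(\frac{1}{p})^p$), là où l'article utilise les congruences de Kummer, dont Gross--Koblitz est un raffinement.
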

\begin{proof}
On refait la même étude avec $K_0$ au lieu de $K_1$. Cette fois $Gal(K_0/\mathbf{Q})$ est abélien donc $S=Gal(K_0/\mathbf{Q}(\zeta_p)) \simeq \mathbf{F}_p $ et $T=Gal(K_0/\mathbf{Q}(\zeta_N^+)) = \mathbf{F}_p^{\times}$ commutent. On a donc une suite exacte :
$$ 1\rightarrow U_0(\omega^{-1}) \rightarrow I_0(\omega^{-1}) \rightarrow I_1(\omega^{-1}) \rightarrow 1$$
où les définitions de $U$ et de $I_i$ sont similaires pour $K_0$. L'analogue immédiat de la proposition~\ref{sansnom1} (avec $\chi=\omega^{-1}$) donne (en tenant compte du fait que $\mathcal{C}_{\mathbf{Q}(\zeta_p)}(\omega^{-1})=0$ par le théorème d'Herbrand--Ribet) :
$$I_0(\omega^{-1}) \simeq \mathbf{F}_p \text{ .}$$
(en fait $I_0$ correspond par la théorie globale du corps de classe à l'extension abélienne $K_{-1}\cdot K_0$ de $K_0$ où $K_{-1}$ est l'unique $\mathbf{F}_p$-extension de $\mathbf{Q}(\zeta_p)$ non ramifiée en dehors de $N$, sur laquelle le groupe de Galois $T$ agit par $\omega^{-1}$, \textit{cf.} (iii) de  \cite[Proposition $5.4$]{CE}). Donc $U_0(\omega^{-1}) = 0$ si et seulement si $I_1(\omega^{-1}) \simeq \mathbf{F}_p$ si et seulement si $r_{K_0}(\omega^{-1})>1$ (car si $I_1(\omega^{-1}) = 0$, alors pour tout $i\geq 1$, $I_i(\omega^{-1})=0$). L'analogue de la proposition~\ref{kernel} pour $i=1$ donne : 
$$U_0(\omega^{-1}) = \varphi(\mathcal{C}_0(\omega^{-1})) $$
(on utilise le fait que $\mathcal{C}_{\mathbf{Q}(\zeta_p)}(\omega^{-1})=0$ et que $\mathbf{Z}[\zeta_p]^{\times}[\omega^{0}]=0$).
Ici $\mathcal{C}_0$ est engendré par les $p-1$ idéaux premiers de $K_0$ au dessus de $N$, qu'on note comme ci-dessus $t(\mathfrak{P})$, $t \in T = \text{Gal}(\mathbf{Q}(\zeta_p)/\mathbf{Q}) = \text{Gal}(K_0/\mathbf{Q}(\zeta_N^+))$. L'analogue immédiat de la proposition~\ref{critere_zero} est le suivant :

\begin{prop}\label{critere_zero_r_0}
Soit $\beta \in \mathbf{Q}(\zeta_p)$ tel que $K_0 = \mathbf{Q}(\zeta_p)(\beta^{\frac{1}{p}})$ (c'est possible par la théorie de Kummer). Alors on a 
$$\varphi(\mathcal{C}_0(\omega^{-1})) = 0$$
si et seulement si
$$\Lambda(\frac{u_{\omega^{-1}}}{\beta}) \equiv 0 \text{ (mod }p\text{)} \text{ .}$$
\end{prop}

On va montrer dans la partie suivante le :
\begin{thm}\label{identite_garett}
On a $$\Lambda(\frac{u_{\omega^{-1}}}{\beta}) \equiv \Lambda(\frac{u_{\omega^{-1}}}{N}) \equiv \frac{1}{B_{1,\omega}}\cdot \mathcal{S}_1 \equiv -8 \cdot \sum_{k=1}^{\frac{N-1}{2}} k\cdot  \log(k) \text{ (mod }p\text{)} \text{ .}$$
\end{thm}
Cela achève la preuve du théorème~\ref{critere_r_0}.
\end{proof}

Esquissons finalement la preuve du théorème~\ref{r_0>i}, implicite dans \cite{Schoof}. On raisonne par récurrence sur $1 \leq i \leq p-1 $. Si $i=1$, cela découle du théorème~\ref{r_0}. Nous nous référons à la partie suivante (ou à \cite[p. $190$]{Schoof}) pour la définition de l'élément de Stickelberger tordu $\phi_{\omega^{-1}}$. Si $a \in (\mathbf{Z}/N\mathbf{Z})^{\times}$, le coefficient de $\phi_{\omega^{-1}}$ en $[a^{-1}]$ est $B_{1, \omega} + \sum_{k=1}^{a-1} \omega(k)$  par la preuve de la proposition~\ref{Stickelberger} dans la partie ci-dessous. Si on identifie un générateur de $(\mathbf{Z}/N\mathbf{Z})^{\times}$ à l'indéterminée $1+T$ (modulo l'idéal engendré par $(1+T)^{N-1}-1$, on peut voir $\phi_{\omega^{-1}}$ comme un élément de $\mathbf{Z}_p[T]/((1+T)^{N-1}-1))$. Son coefficient modulo $p$ en $T^j$ pour $1 \leq j < i$ est nul par l'hypothèse de récurrence. Son coefficient en $T^i$ est $\frac{-4}{3} \cdot \sum_{k=1}^{\frac{N-1}{2}} k\cdot \text{log}(k)^i$ par la même preuve que le lemme~\ref{B_2} (avec l'hypothèse que $\sum_{k=1}^{\frac{N-1}{2}} k\cdot \text{log}(k)^j \equiv 0 \text{ (mod }p\text{)}$ pour tout $1 \leq j < i$). Comme $\phi_{\omega^{-1}}$ est dans l'idéal de Fitting du $\mathbf{Z}_p[[T]]$-module $\mathcal{C}_{K_0}(\omega^{-1})$, par la proposition $2.1$ de \cite{Schoof} on a $\frac{-4}{3} \cdot \sum_{k=1}^{\frac{N-1}{2}} k\cdot \text{log}(k)^i \equiv 0 \text{ (mod }p\text{)}$, d'où le résultat par récurrence.

Il serait intéressant de prouver que la réciproque du théorème~\ref{r_0>i} est vraie pour tout $1 \leq i \leq p-1$. En utilisant les représentations galoisiennes construites par Calegari--Emerton et la théorie des symboles modulaires, on montrera dans un prochain article que la réciproque est vraie pour $i \leq 2$, au moins si $\nu = 1$.

\section{Lien avec la conjecture de Gross}
Soit $\chi : (\mathbf{Z}/p\mathbf{Z})^{\times} \rightarrow \mathbf{Z}_p^{\times}$ un caractère différent de $\omega$ (donc $\chi = \omega^i$ pour $i$ entier, $i \not\equiv 1 \text{ (mod }p\text{)}$). Soit $H = (\mathbf{Z}/N\mathbf{Z})^{\times}$. Rappelons que $I_H \subset \mathbf{Z}_p[H]$ est l'idéal d'augmentation.

Soit $\overline{B}_1 : \mathbf{R} \rightarrow \mathbf{R}$ le premier polynôme de Bernoulli périodique défini par $\overline{B}_1(x) = x-[x]-\frac{1}{2}$ si $x \not\in \mathbf{Z}$ et  $\overline{B}_1(x) =0$ si $x\in \mathbf{Z}$.

On définit l'\textit{élément de Stickelberger tordu} par $\chi$, noté $\phi_{\chi} \in \mathbf{Q}_p[H]$ donné par :
$$\phi_{\chi} = \sum_{a \in (\mathbf{Z}/Np\mathbf{Z})^{\times}} \overline{B}_1(\frac{a}{Np})\cdot \chi^{-1}(a) \cdot [a^{-1}] \text{ .}$$

Il est classique que si $\chi \neq 1, \omega$, $\phi_{\chi} \in \mathbf{Z}_p[H]$ (nous le démontrons  dans la proposition ci-dessous).

\begin{prop}\label{Stickelberger}
Si $\chi \neq 1,\omega$, on a $\phi_{\chi} \in I_H$. On a un homomorphisme de groupes $ \mathcal{L} : I_H/I_H^2 \rightarrow \mathbf{Z}/p^{\nu}\mathbf{Z}$ donnée par $[a]-1 \mapsto  \log(a)$. On a :
$$ \mathcal{L}(\phi_{\chi}) \equiv -\sum_{r=1}^{N-1} (\sum_{a=1}^{r-1} \chi^{-1}(a))\cdot  \log(r) \text{ (mod }p^{\nu} \text{)}\text{ .}$$ En particulier si $\chi=\omega^{-1}$, cette image modulo $p$ est $$\frac{2}{3}\cdot \sum_{k=1}^{\frac{N-1}{2}} k\cdot  \log(k) \text{ (mod }p\text{)} \text{ .}$$
\end{prop}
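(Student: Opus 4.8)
The plan is to prove the three assertions in order: that $\mathcal{L}$ is well defined, that $\phi_{\chi}\in I_H$, and finally the congruence. First I would dispose of $\mathcal{L}$: there is a canonical isomorphism $I_H/I_H^2 \xrightarrow{\sim} H\otimes_{\mathbf{Z}}\mathbf{Z}_p$ sending $[a]-1$ to $\bar a$, and composing with the map induced by $\log:H\to\mathbf{Z}/p^{\nu}\mathbf{Z}$ gives exactly $[a]-1\mapsto\log(a)$; this is automatically a homomorphism. For the rest I would set up one computation and reuse it. Writing $\overline{B}_1(a/Np)=\tilde a/(Np)-\tfrac12$ with $\tilde a\in\{1,\dots,Np\}$ the representative, and grouping the terms of $\phi_{\chi}$ by the class $b=a\bmod N$ (with representative $r\in\{1,\dots,N-1\}$), the coefficient of $[b^{-1}]$ is $c_b=\sum_{a\equiv b}\overline{B}_1(a/Np)\chi^{-1}(a)$. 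The constant $-\tfrac12$ contributes $-\tfrac12\sum_{c\in\mathbf{F}_p^{\times}}\chi^{-1}(c)=0$ since $\chi\neq 1$, so $c_b=\tfrac{1}{Np}f(r)$ with $f(r)=\sum_{s=0}^{p-1}(r+Ns)\chi^{-1}(r+Ns)$.

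Reindexing $f(r)$ by the residue $c=(r+Ns)\bmod p$ and using $\sum_c\chi^{-1}(c)=0$ gives $f(r)=N\sum_{c=1}^{p-1}s_c\,\chi^{-1}(c)$, where $s_c\in\{0,\dots,p-1\}$ represents $N^{-1}(c-r)$; since $N\equiv 1\pmod p$ this is \emph{exactly} $s_c=(c-r)\bmod p$, so $c_b$ depends only on $\bar r=r\bmod p$. For integrality ($c_b\in\mathbf{Z}_p$, i.e. $f(r)\equiv 0\pmod p$) I would note $\sum_c s_c\chi^{-1}(c)\equiv\sum_c(c-r)\chi^{-1}(c)\equiv\sum_c c\,\chi^{-1}(c)\pmod p$, and writing $\chi=\omega^{i}$ with $i\not\equiv 1\pmod{p-1}$ (as $\chi\neq\omega$) this is $\equiv\sum_c c^{\,1-i}\equiv 0\pmod p$. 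For the augmentation I would compute $\varepsilon(\phi_{\chi})=\sum_b c_b=\tfrac{1}{Np}\sum_{r=1}^{N-1}f(r)=\tfrac1p\sum_c\chi^{-1}(c)\sum_{r=1}^{N-1}s_c$; since $p\mid N-1$ the range of $r$ splits into complete residue systems mod $p$, making $\sum_{r=1}^{N-1}s_c=\tfrac{(N-1)(p-1)}2$ independent of $c$, whence the whole thing is a multiple of $\sum_c\chi^{-1}(c)=0$. Thus $\phi_{\chi}\in I_H$.

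For the main congruence, having $\sum_b c_b=0$ I would write $\mathcal{L}(\phi_{\chi})=\sum_b c_b\,\mathcal{L}([b^{-1}]-1)=-\sum_{r=1}^{N-1}\tfrac{f(r)}{Np}\log(r)$. The heart of the argument — and the step I expect to be the main obstacle — is that the naive per-term identity $\tfrac{f(r)}{Np}=\sum_{a=1}^{r-1}\chi^{-1}(a)$ is \emph{false}; the two sides differ by a nonzero constant. I would instead set $g(r)=\tfrac{f(r)}{Np}=\tfrac1p\sum_{c=1}^{p-1}s_c\chi^{-1}(c)$ and $P(r)=\sum_{a=1}^{r-1}\chi^{-1}(a)$ (which depends only on $(r-1)\bmod p$, as a full period of $\chi^{-1}$ sums to $0$) and show by a one-line telescoping computation that under $\bar r\mapsto \bar r+1$ both $g$ and $P$ increase by exactly $\chi^{-1}(\bar r)$, so that $g(r)-P(r)=D_0$ is a constant (in fact $D_0=B_{1,\chi^{-1}}$). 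The discrepancy is then killed after the $\log$-weighting: by Wilson's theorem $\prod_{r=1}^{N-1}r\equiv-1\pmod N$, and $\log(-1)=0$ since $p$ is odd, so $\sum_{r=1}^{N-1}\log(r)=0$ in $\mathbf{Z}/p^{\nu}\mathbf{Z}$. Hence $D_0\sum_r\log(r)=0$ and
\[\mathcal{L}(\phi_{\chi})\equiv -\sum_{r=1}^{N-1}\Big(\sum_{a=1}^{r-1}\chi^{-1}(a)\Big)\log(r)\pmod{p^{\nu}}.\]

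Finally, for the special case $\chi=\omega^{-1}$ I would reduce modulo $p$: since $\omega(a)\equiv a$, one has $\sum_{a=1}^{r-1}\omega(a)\equiv\tfrac{(r-1)r}{2}\pmod p$, so $\mathcal{L}(\phi_{\omega^{-1}})\equiv -\sum_{r=1}^{N-1}\tfrac{(r-1)r}{2}\log(r)=-\mathcal{S}_1\pmod p$. The elementary identity $\mathcal{S}_1=\tfrac{-2}{3}\sum_{k=1}^{(N-1)/2}k\,\log(k)$ (Lemme~\ref{B_2}, obtained by folding the sum via $\log(N-r)=\log(r)$ and simplifying the resulting polynomial in $r$) then yields the stated value $\tfrac{2}{3}\sum_{k=1}^{(N-1)/2}k\,\log(k)\pmod p$.
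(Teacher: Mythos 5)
Your argument is correct and is essentially the paper's: you compute the coefficient of $[r^{-1}]$ in $\phi_{\chi}$ and find it equals $B_{1,\chi^{-1}}+\sum_{a=1}^{r-1}\chi^{-1}(a)$ (your telescoping in $\bar r$ is just a repackaging of the paper's direct evaluation of $\sum_k \frac{k}{p}\chi^{-1}(k+r)$), and the constant $B_{1,\chi^{-1}}$ is then killed because $\sum_{r=1}^{N-1}\log(r)=\log((N-1)!)=\log(-1)=0$; your explicit checks of $p$-integrality and of the vanishing of the augmentation are welcome additions that the paper leaves implicit. Two small caveats: the parenthetical claim that $D_0=B_{1,\chi^{-1}}$ is nonzero fails for even $\chi$ (where $B_{1,\chi^{-1}}=0$), though nothing in your argument uses it; and Lemme~\ref{B_2} is not obtained by folding alone --- one also needs the doubling trick (comparing $\overline{B}_2(2x)$ with $\overline{B}_2(x)$) to relate $\sum_{k\le (N-1)/2}k^2\log(k)$ to $\sum_{k\le (N-1)/2}k\log(k)$ --- but since you only cite that lemma, this does not affect the rest of your proof.
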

\begin{proof}
Soit $r$ un entier premier à $N$ tel que $0 < r < N$.
Le coefficient $\alpha_r$ en $[r^{-1}]$ de $\phi_{\chi}$ est 
$$\alpha_r = \sum_{a\in G, a\equiv r \text{ (mod }N\text{)}} \overline{B}_1(\frac{a}{Np})\cdot \chi^{-1}(a) = \sum_{k=0, k \neq p-r}^{p-1} \overline{B}_1(\frac{kN+r}{Np})\cdot \chi^{-1}(kN+r) \text{ .}$$
Comme $\chi \neq 1$, 
$$\alpha_r = \sum_{k=0, k \neq p-r}^{p-1} \frac{k}{p}\cdot \chi^{-1}(k+r) = \frac{1}{p}\cdot \left( \sum_{a=1}^{p-1}a\cdot \chi^{-1}(a) + p\cdot \sum_{a=1}^{r-1}\chi^{-1}(a) \right) \text{ .}$$
Donc 
$$\alpha_r = B_{1, \chi^{-1}} + \sum_{a=1}^{r-1}\chi^{-1}(a) \text{ .}$$
où $B_{1, \chi^{-1}} = \sum_{a \in (\mathbf{Z}/p\mathbf{Z})^{\times}} \overline{B}_1(\frac{a}{p})\cdot \chi^{-1}(a)$ est le premier nombre de Bernoulli généralisé associé à $\chi^{-1}$.
D'après \cite[Corollary $5.15$]{Washington} (comme $\chi \neq \omega$), $B_{1,\chi} \in \mathbf{Z}_p$ (et est congru à $\frac{B_{i+1}}{i+1}$ modulo $p$ si $\chi = \omega^i$). Donc $\alpha_r \in \mathbf{Z}_p^{\times}$, et on a la formule pour $ \mathcal{L}(\phi_{\chi})$. La dernière assertion vient du :

\begin{lem}\label{B_2}
On a :
$$\sum_{k=1}^{N-1} k^2\cdot  \log(k) \equiv \frac{-4}{3}\cdot \left( \sum_{k=1}^{\frac{N-1}{2}}k\cdot  \log(k)\right) \text{ (mod }p^{\nu}\text{)} \text{ .}$$
\end{lem}
\begin{proof}
On pose $V = \sum_{t=1}^{\frac{N-1}{2}} t\cdot  \log(t)$. Soit $\overline{B}_2 : \mathbf{R} \rightarrow \mathbf{R}$ défini par 
$$\overline{B_2}(x) = B_2(x-[x])$$
où $B_2(x) = x^2-x+\frac{1}{6}$ est le deuxième polynôme de Bernoulli et $[x]$ est la partie entière (inférieure) de $x$.
On a alors $$V = \sum_{t \in (\mathbf{Z}/N\mathbf{Z})^{\times}} F(\frac{t}{N})\cdot  \log(t)$$ où $$F(x) = \frac{1}{4}\overline{B_2}(2x) - \overline{B_2}(x) + \frac{1}{2}\overline{B_1}(x-\frac{1}{2}) + \frac{1}{8} \text{ .}$$
Par conséquent, on a $$ V = \frac{-3}{4} \cdot \sum_{t \in (\mathbf{Z}/N\mathbf{Z})^{\times}} \overline{B}_2(\frac{t}{N})\cdot  \log(t)$$ donc
\begin{align*}
V &=  \frac{-3}{4} \cdot \sum_{t=1}^{N-1} t^2\cdot  \log(t)
\end{align*}
\end{proof}

\end{proof}

Soit $\Gamma_N : \mathbf{Z}_N \rightarrow \mathbf{Z}_N^{\times}$ la fonction gamma $N$-adique de Morita. C'est l'unique fonction continue $\mathbf{Z}_N \rightarrow \mathbf{Z}_N^{\times}$ vérifiant $\Gamma_N(n) = (-1)^n\cdot \prod_{1 \leq i \leq n-1, \text{ pgcd(}n,N \text{)}=1} i$ si $n>1$ est un entier.
Rappelons quelques propriétés de $\Gamma_N$ (qui caractérisent en fait $\Gamma_N$, par \cite[p.$524$]{Hayes}). 
\begin{itemize}
\item[$\bullet$]$\Gamma_N(0)=1$.
\item[$\bullet$]Si $z\in \mathbf{Z}_N^{\times}$, $\Gamma_N(1-z) = z\cdot \Gamma_N(-z)$.
\item[$\bullet$]Si $z \in N\mathbf{Z}_N$, $\Gamma_N(1-z) = -\Gamma_N(z)$.
\end{itemize}

Si $a \in \mathbf{Q}_N^{\times}$ et $t \in \mathbf{Z}_p$, notons 
$$a^t := a\otimes t \in \mathbf{Q}_N^{\times} \otimes_{\mathbf{Z}} \mathbf{Z}_p \text{ .}$$
Rappelons qu'on a défini un logarithme discret
$$\Lambda: \mathbf{Q}_N^{\times} \otimes_{\mathbf{Z}} \mathbf{Z}_p \rightarrow \mathbf{Z}/p^{\nu}\mathbf{Z} \text{ .}$$ 

\begin{prop}\label{Gamma}
Soit $\chi : (\mathbf{Z}/p\mathbf{Z})^{\times} \rightarrow \mathbf{Z}_p$ un caractère $\neq 1, \omega$. Alors on a 
$$\Lambda \left( \prod_{a=1}^{p-1}\Gamma_N(\frac{a}{p})^{\chi^{-1}(a)} \right) \equiv \chi(-1)\cdot  \mathcal{L}(\phi_{\chi}) \equiv -\chi(-1)\cdot \sum_{r=1}^{N-1}\cdot (\sum_{a=1}^{r-1} \chi^{-1}(a))\cdot  \log(r)  \text{ (mod }p^{\nu}\text{)} $$
(\textit{cf.} proposition~\ref{Stickelberger}). En particulier si $\chi = \omega^{-1}$, on obtient la congruence suivante (on voit $\log$ comme un morphisme $\mathbf{Z}_N^{\times} \rightarrow \mathbf{F}_p$ via la projection $\mathbf{Z}_N^{\times} \rightarrow \mathbf{F}_N^{\times}$) :
$$\sum_{a=1}^{p-1} a\cdot  \log(\Gamma_N(\frac{a}{p})) \equiv \frac{-2}{3} \cdot \sum_{k=1}^{\frac{N-1}{2}} k\cdot  \log(k) \text{ (mod }p\text{)} \text{ .}$$
\end{prop}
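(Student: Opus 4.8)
The plan is to unwind the definition of $\Lambda$, reduce the $N$-adic Gamma values modulo $N$ to ordinary factorials, and then match the resulting sum of discrete logarithms with the image of the twisted Stickelberger element computed in Proposition~\ref{Stickelberger}. The second congruence in the statement is nothing but the formula for $\mathcal{L}(\phi_\chi)$ from Proposition~\ref{Stickelberger}, so the whole content is the first congruence $\Lambda\left(\prod_{a=1}^{p-1}\Gamma_N(\frac{a}{p})^{\chi^{-1}(a)}\right)\equiv\chi(-1)\cdot\mathcal{L}(\phi_\chi)\pmod{p^\nu}$.

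First I would observe that each $\Gamma_N(\frac{a}{p})$ lies in $\mathbf{Z}_N^\times$ and $\chi^{-1}(a)\in\mathbf{Z}_p$, so by the very definition of $\Lambda$ one has $\Lambda\left(\prod_a\Gamma_N(\frac{a}{p})^{\chi^{-1}(a)}\right)=\sum_{a=1}^{p-1}\chi^{-1}(a)\log\left(\Gamma_N(\frac{a}{p})\right)$, where $\log$ is applied to the reduction of $\Gamma_N(\frac{a}{p})$ modulo $N$. Next I would compute this reduction: since Morita's function satisfies $\Gamma_N(x)\equiv\Gamma_N(y)\pmod N$ whenever $x\equiv y\pmod N$, and $\Gamma_N(n)=(-1)^n(n-1)!$ for an integer $1\le n\le N$, writing $m_a\in\{1,\dots,N-1\}$ for the representative of $ap^{-1}\bmod N$ gives $\Gamma_N(\frac{a}{p})\equiv(-1)^{m_a}(m_a-1)!\pmod N$. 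As $p$ is odd, $\log(-1)=0$, hence $\log\left(\Gamma_N(\frac{a}{p})\right)\equiv\sum_{j=1}^{m_a-1}\log(j)\pmod{p^\nu}$, and the left-hand side becomes $\sum_{a=1}^{p-1}\chi^{-1}(a)\sum_{j=1}^{m_a-1}\log(j)$.

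The heart of the proof is to identify this double sum with $\chi(-1)\cdot\mathcal{L}(\phi_\chi)$. Here I would reindex by the substitution $a\equiv pm\pmod N$ (so that $m=m_a$), using that $N\equiv1\pmod p$ to rewrite $\chi^{-1}(a)=\chi^{-1}(-N\lfloor pm/N\rfloor)=\chi(-1)\cdot\chi^{-1}(\lfloor pm/N\rfloor)$; this is exactly where the factor $\chi(-1)$ is produced, and the congruence $N\equiv1\pmod p$ guarantees that no stray character value $\chi^{-1}(N)$ survives. One then reorganises the double sum using the homomorphism property of $\log$, Wilson's congruence $\sum_{j=1}^{N-1}\log(j)=\log((N-1)!)=\log(-1)=0$, and the vanishing $\sum_{a=1}^{p-1}\chi^{-1}(a)=0$ (valid since $\chi\neq 1$), until it takes the shape $-\chi(-1)\sum_{r=1}^{N-1}\left(\sum_{b=1}^{r-1}\chi^{-1}(b)\right)\log(r)$, which is $\chi(-1)\cdot\mathcal{L}(\phi_\chi)$ by Proposition~\ref{Stickelberger}. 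I expect this step to be the main obstacle: the two weightings of the $\log(j)$ do \emph{not} agree term by term (only their total pairings against $\log$ coincide), so the manipulation must genuinely exploit that $\log$ is a homomorphism killing $p^\nu$-th powers, together with the passage from denominator $p$ in the Gamma argument to denominator $Np$ in $\phi_\chi$, governed by a distribution relation for the periodic Bernoulli polynomial $\overline{B}_1$. This identity is the twisted form over $\mathbf{Q}$ of Gross's refined class-number formula.

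Finally, for the special case $\chi=\omega^{-1}$ I would specialise the general formula: since $\chi^{-1}=\omega$ and $\omega(a)\equiv a\pmod p$, the left-hand side reduces mod $p$ to $\sum_{a=1}^{p-1}a\cdot\log\left(\Gamma_N(\frac{a}{p})\right)$; with $\chi(-1)=\omega^{-1}(-1)=-1$ and $\mathcal{L}(\phi_{\omega^{-1}})\equiv\frac{2}{3}\sum_{k=1}^{(N-1)/2}k\cdot\log(k)\pmod p$ from Proposition~\ref{Stickelberger}, this yields $\sum_{a=1}^{p-1}a\cdot\log\left(\Gamma_N(\frac{a}{p})\right)\equiv-\frac{2}{3}\sum_{k=1}^{(N-1)/2}k\cdot\log(k)\pmod p$, as claimed.
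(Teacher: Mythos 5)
Your setup is sound and partially matches known facts: $\Lambda$ does reduce the claim to computing $\sum_{a=1}^{p-1}\chi^{-1}(a)\log\bigl(\Gamma_N(\tfrac{a}{p})\bigr)$, the continuity property of Morita's function does give $\Gamma_N(\tfrac{a}{p})\equiv(-1)^{m_a}(m_a-1)!\ (\mathrm{mod}\ N)$ with $m_a\equiv ap^{-1}\ (\mathrm{mod}\ N)$, the factor $\chi(-1)$ does come from $a\equiv-\lfloor pm_a/N\rfloor\ (\mathrm{mod}\ p)$ via $N\equiv1\ (\mathrm{mod}\ p)$, and the specialization to $\chi=\omega^{-1}$ at the end is correct. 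But the proof has a genuine gap exactly where you yourself flag "the main obstacle": the identity
$$\sum_{a=1}^{p-1}\chi^{-1}(a)\,\log\bigl((m_a-1)!\bigr)\ \equiv\ \chi(-1)\cdot\mathcal{L}(\phi_{\chi})\ \text{ (mod }p^{\nu}\text{)}$$
is the entire content of the proposition, and you do not prove it — you only list the ingredients ("Wilson's congruence", $\sum_a\chi^{-1}(a)=0$, "a distribution relation for $\overline{B}_1$") that a proof "must" use. After your reduction the left side is a sum of $p-1$ terms indexed by $a$, while $\mathcal{L}(\phi_\chi)$ unwinds to $\chi(-1)\sum_{b=1}^{N-1}\chi^{-1}(b)\log(b!)$, a sum of $N-1$ terms; these do not match term by term, and bridging them is a substantive computation, not a routine reorganisation. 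Asserting that one "reorganises the double sum until it takes the shape" of the answer is circular at this point.

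For comparison, the paper does not reduce $\Gamma_N(\tfrac{a}{p})$ to a single factorial. Following Hayes, it iterates the functional equation of $\Gamma_N$ to write $\Gamma_N(1-\tfrac{a}{p})$, up to sign, as $(\tfrac{a}{p})^{a\frac{N-1}{p}}\prod_{x\in X}x^{[\frac{ax}{Np}]+[\frac{a(N-x)}{Np}]+1}$ over a set $X$ of representatives of $(\mathbf{Z}/N\mathbf{Z})^{\times}$ chosen $\equiv1\ (\mathrm{mod}\ p)$; the exponents $[\frac{ax}{Np}]$ are then matched against $\overline{B}_1(\frac{ax}{Np})=\frac{ax}{Np}-[\frac{ax}{Np}]-\frac12$ in the definition of $\phi_{\chi}$, after splitting the resulting sum as $S_1-S_2$ and proving $S_1\equiv0\ (\mathrm{mod}\ p^{\nu})$ together with two elementary lattice-point lemmas. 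That several-page computation is precisely the work your sketch omits; either carry out the analogous computation for your factorial expression (e.g., by expanding $\sum_{b=1}^{N-1}\chi^{-1}(b)\log(b!)$ along residues of $b$ modulo $p$ and comparing with the $m_a$), or adopt the functional-equation route.
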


\begin{proof}
Remarquons d'abord que si $z \in N\mathbf{Z}_N$, on a $\Gamma_N(z) \equiv \pm 1 \text{ (mod }N\text{)}$.
En effet c'est vrai si $z \in \mathbf{N}$ car $(N-1)! \equiv -1 \text{ (mod }N\text{)}$, or $\mathbf{N}$ est dense dans $\mathbf{Z}_p$ et $\Gamma_N$ est continue.

On suit \cite[$p.525-526$]{Hayes}. Par l'équation fonctionnelle satisfaite par $\Gamma_N$, pour tous entiers $e$ et $n$ avec $0 \leq e\leq n$ on a :
$$\Gamma_N(1-z-e) = \left( \prod_{r=e}^n \{z+r\} \right)\cdot \Gamma_N(-z-n)$$ où $\{z\}=z$ si $z \in \mathbf{Z}_N^{\times}$ et $\{z\}=-1$ si $z\in N\mathbf{Z}_N$.

\begin{lem}
Soit $1 \leq a \leq p-1$ un entier. On a : 
$$\Gamma_N(1-\frac{a}{p}) \equiv \pm (\frac{a}{p})^{a\cdot \frac{N-1}{p}}\prod_{x \in X} x^{[\frac{ax}{Np}]+[\frac{a(N-x)}{Np}]+1}  \text{ (mod }N\text{)} $$
 où $X$ est n'importe quel ensemble d'entiers représentant les classes inversibles modulo $N$, avec la condition que tout $x \in X$ soit $\equiv 1 \text{ (mod }p\text{)}$ ($[.]$ est la partie entière inférieure).
\end{lem}
\begin{proof}
On choisit $z = \frac{a}{p}$ avec $1 \leq a \leq p-1$, $e=0$ et $n=a\cdot\frac{N-1}{p}$ dans l'identité précédent le lemme.
Cela donne :
$\Gamma_N(1-\frac{a}{p}) = \prod_{r=0}^{a\cdot \frac{N-1}{p}} \{\frac{a}{p}+r\}\cdot \Gamma_N(\frac{-aN}{p})$. On a vu au début de la démonstration de la proposition que $\Gamma_N(\frac{-aN}{p}) \equiv \pm 1  \text{ (mod }N\text{)}$. 
Cela montre :  
$$\Gamma_N(1-\frac{a}{p}) = \pm \prod_{r=0}^{a\cdot \frac{N-1}{p}} \{\frac{a}{p}+r\} \text{ .}$$

Pour tout $0 \leq r  < a\cdot \frac{N-1}{p}$, on a $\frac{a}{p}+r \in \mathbf{Z}_N^{\times}$. En effet, si $\frac{a}{p}+r = \frac{k}{p}\cdot N$ pour un entier $k$, on a $k >0$ et $r = \frac{kN-a}{p} \in \mathbf{N}$, donc $k \equiv a  \text{ (mod }p\text{)}$, i.e. $k = k'p+a$ avec $k' \geq 0$, donc $r \geq a \cdot \frac{N-1}{p}$.

Soit $X$ un ensemble comme dans l'énoncé du lemme. Soit $x \in X$.  On considère les entiers $r$ tels que $1+\frac{pr}{a}\equiv x \text{ (mod }N\text{)}$ et $0 \leq r \leq a\cdot \frac{N-1}{p}$. \\ On écrit $1+\frac{pr}{a} = x +\frac{p\cdot t}{a}\cdot N$ pour un $t \in \mathbf{Q}$. On a $t = \frac{a+pr-ax}{Np}$. On a $a+pr-ax \equiv 0 \text{ (mod }p\text{)}$ car $x \equiv 1 \text{ (mod }p\text{)}$ et   $a+pr-ax \equiv 0 \text{ (mod }N\text{)}$ car  $1+\frac{pr}{a}\equiv x \text{ (mod }N\text{)}$. On a donc $t \in \mathbf{Z}$. Réciproquement, si $t \in \mathbf{Z}$, on définit $r$ par $1+\frac{pr}{a} = x +\frac{p\cdot t}{a}\cdot N$, c'est à dire $r =\frac{a(x-1)}{p}+t\cdot N$, et on a $r \in \mathbf{Z}$ car $x \equiv 1 \text{ (mod }p\text{)}$. En écrivant $-\frac{a}{p} < r \leq \frac{a(N-1)}{p}$, on obtient $\frac{-ax}{Np} < t \leq \frac{a(N-x)}{Np}$, donc il y a $[\frac{a(N-x)}{Np}]-[\frac{-ax}{Np}] = [\frac{a(N-x)}{Np}]+[\frac{ax}{Np}]+1$ possibilités pour $t$, ce qui montre le lemme.

\end{proof}

Fixons dans toute la suite l'ensemble $X = \{N-k\cdot p, k\in \{1,2,..., N-1\}\}$. On peut aussi écrire $X = \{1+k\cdot p, k\in  \mathcal{K}\}$ où $\mathcal{K} = \{\frac{N-1}{p}-(N-1), ...,\frac{N-1}{p}-1\}$.

On a :
$$\prod_{x \in X} x \equiv (N-1)! \equiv -1  \text{ (mod }N\text{)} \text{ .}$$
Donc :
$$\Gamma_N(1-\frac{a}{p}) = \pm (\frac{a}{p})^{a\cdot \frac{N-1}{p}} \cdot \prod_{x\in X} x^{[\frac{ax}{Np}]+[\frac{a(N-x)}{Np}]} \text{ .}$$
Fixons dans toute la suite l'ensemble $X = \{N-k\cdot p, k\in \{1,2,..., N-1\}\}$.
On a : 
$$\prod_{x \in X} x^{[\frac{a(N-x)}{Np}]} \equiv \prod_{y \in Y} (-y)^{[\frac{ay}{Np}]}  \text{ (mod }N \text{)} $$
où $Y = \{k \cdot p, k \in \{1,2,...,N-1\}\}$.

On a : 
$$\prod_{y \in Y} (-y)^{[\frac{ay}{Np}]} \equiv \prod_{k=1}^{N-1}(-kp)^{[\frac{ak}{N}]} \text{ (mod }N\text{)} \text{ .}$$

\begin{lem}\label{rectangle}
On a, pour tout $a\geq 1$ entier : 
$$\sum_{k=1}^{N-1} [\frac{ak}{N}] = \frac{(a-1)(N-1)}{2} \text{ .}$$
\end{lem}
\begin{proof}
Notons $G$ la quantité de gauche. Alors $2(G + [\frac{aN}{N}] + N) = 2(G+a+N)$ est le nombre de points à coordonnées entières (bords compris) dans un rectangle à sommets entiers et de côtés de longueurs $a$ et $N$, c'est à dire $(a+1)(N+1)$. Donc $G = \frac{(a-1)(N-1)}{2}$.
\end{proof}

\begin{lem}
On a $$\left( \prod_{k=1}^{N-1} k^{[\frac{ak}{N}]} \right)^4 \equiv 1 \text{ (mod }N\text{)} \text{ .}$$
\end{lem}
\begin{proof}

En effet, on regroupe $k$ et $N-k$, en utilisant le fait que $[\frac{a(N-k)}{N}] = a -[\frac{ak}{N}]-1$ et que $(\frac{N-1}{2})!^4 \equiv 1  \text{ (mod }N\text{)}$.
\end{proof}

En utilisant les deux lemmes précédents, on obtient 
$$\chi^{-1}(-1)\cdot \Lambda \left( \prod_{a=1}^{p-1}\Gamma_N(\frac{a}{p})^{\chi^{-1}(a)} \right)  =  \Lambda \left (\prod_{a=1}^{p-1} (\frac{a}{p})^{a\cdot \chi^{-1}(a)\cdot \frac{N-1}{p}} \right) + \Lambda \left( \prod_{a=1}^{p-1} \prod_{x \in X} x^{[\frac{ax}{Np}]\cdot \chi^{-1}(a)} \right) \text{ .}$$

Pour finir la preuve du théorème~\ref{Gamma}, il s'agit de montrer l'identité :
$$\sum_{a=1}^{p-1}\sum_{x\in X} \chi^{-1}(a)\cdot(a\cdot\frac{N-1}{p})\cdot  \log(a) + [\frac{ax}{Np}]\cdot \chi^{-1}(a) \cdot  \log(x) \equiv  \mathcal{L}(\phi_{\chi}) \text{ (mod }p^{\nu}\text{)} \text{ .}$$

Rappelons que $\phi_{\chi} = \sum_{a=1}^{p-1}\sum_{x \in X} \overline{B}_1(\frac{ax}{Np})\cdot \chi^{-1}(a)\cdot [(ax)^{-1}]$, et qu'en fait $\phi_{\chi} \in \mathbf{Z}_p[(\mathbf{Z}/N\mathbf{Z})^{\times}]$. Soit $ \lambda : (\mathbf{Z}/N\mathbf{Z})^{\times} \rightarrow \mathbf{Z}$ un relèvement du $ \log : (\mathbf{Z}/N\mathbf{Z})^{\times} \rightarrow  \mathbf{Z}/p^{\nu}\mathbf{Z}$ fixé dans cet article. 
Soit $$S = \sum_{a=1}^{p-1}\sum_{x \in X} \chi^{-1}(a)(\frac{ax}{Np}-[\frac{ax}{Np}])\cdot  \lambda(ax)\in \mathbf{Z}_p $$
(le fait que $S \in \mathbf{Z}_p$ vient du fait que $\phi_{\chi} \in \mathbf{Z}_p[(\mathbf{Z}/N\mathbf{Z})^{\times}]$.)
Alors on a :
$$ \mathcal{L}(\phi_{\chi}) \equiv -S \text{ (mod }p^{\nu}\text{)}\text{ .}$$
On a :
$$S = S_1-S_2$$
où $$S_1 = \sum_{a=1}^{p-1}\sum_{x \in X} \chi^{-1}(a)\cdot\frac{ax}{Np}\cdot  \lambda(ax)$$
et
$$S_2 = \sum_{a=1}^{p-1}\sum_{x \in X} \chi^{-1}(a)\cdot[\frac{ax}{Np}]\cdot  \lambda(ax) \text{ .}$$
On a $S_1$ et $S_2$ $\in \mathbf{Z}_p$. De plus 
$$S_2 \equiv \left( \sum_{a=1}^{p-1} \chi^{-1}(a) \cdot a\cdot \frac{N-1}{p} \cdot \log(a)\right) +  \left( \sum_{a=1}^{p-1} \sum_{x \in X} \chi^{-1}(a)\cdot [\frac{ax}{Np}] \cdot \log(x) \right) \text{ (mod }p^{\nu}\text{)} \text{ .}$$
par l'identité suivante :
\begin{lem}
On a, pour $a$ entier, $1 \leq a \leq p-1$,
$$\sum_{x \in X} [\frac{ax}{Np}] \equiv a\cdot \frac{N-1}{p}  \text{ (mod }p^{\nu}\text{)} \text{ .}$$
\end{lem}
\begin{proof}
Rappelons qu'on a  $X = \{kp+1, k \in \mathcal{K} \}$ où $\mathcal{K} = \{\frac{N-1}{p}-(N-1), ...,\frac{N-1}{p}-1\}$.
Si $x = k\cdot p+1 \in X$, on a $[\frac{a\cdot x}{Np}] = [\frac{ak}{N}+\frac{a}{Np}] = [\frac{ak}{N}]$.
Donc :
\begin{align*}
\sum_{x \in X} [\frac{ax}{Np}] &= \sum_{k=\frac{N-1}{p}-(N-1)}^{\frac{N-1}{p}-1} [\frac{ak}{N}] = \left( \sum_{k=1}^{\frac{N-1}{p}-1} [\frac{ak}{N}] \right) - \left( \sum_{k=1}^{N-1-\frac{N-1}{p}}[\frac{ak}{N}]\right) - (N-1-\frac{N-1}{p}) \\& = \left( \sum_{k=N-\frac{N-1}{p}}^{N-1} [\frac{a(N-k)}{N}] \right)\ - \left( \sum_{k=1}^{N-1-\frac{N-1}{p}}[\frac{ak}{N}]\right) - (N-1-\frac{N-1}{p}) \\& = \frac{(a-1)(N-1)}{p}-\sum_{k=1}^{N-1} [\frac{ak}{N}] - (N-1-\frac{N-1}{p}) \text{ .}
\end{align*}
D'après le lemme~\ref{rectangle}, $\sum_{k=1}^{N-1} [\frac{ak}{N}] = \frac{(a-1)(N-1)}{2}$, d'où :
\begin{align*}
\sum_{x \in X} [\frac{ax}{Np}] &= \frac{(a-1)(N-1)}{p} - \frac{(a-1)(N-1)}{2} - (N-1-\frac{N-1}{p}) \equiv a \cdot \frac{N-1}{p}  \text{ (mod }p^{\nu}\text{)}
\end{align*}
ce qui achève la démonstration du lemme.
\end{proof}
Pour finir la démonstration de la proposition~\ref{Gamma}, il reste à montrer que $S_1 \equiv 0 \text{ (mod }p^{\nu}\text{)}$.
On a :
$$S_1 = \frac{1}{Np}\cdot \left( \sum_{a=1}^{p-1}\chi^{-1}(a)\cdot a\cdot (\sum_{x\in X} x\cdot  \lambda(ax) )\right) \text{ .}$$
On a : 
$$\sum_{x \in X} x\cdot  \lambda(ax) = \sum_{k \in \mathcal{K}} (1+kp)\cdot  \lambda(ax) = \left( \sum_{x \in X}  \lambda(ax) \right)+p\cdot \left(\sum_{k \in \mathcal{K}} k\cdot  \lambda((kp+1)a)\right)\text{ .}$$
Le terme
$$\sum_{x \in X}  \lambda(ax) $$
ne dépend pas de $a$ et est divisible par $p^{\nu}$. Comme $\frac{1}{p}\sum_{a=1}^{p-1} a\cdot \chi^{-1}(a) = B_{1,\chi^{-1}} \in \mathbf{Z}_p$, on a :
$$S_1 \equiv \frac{1}{N}\cdot \left(\sum_{a=1}^{p-1} \chi^{-1}(a)\cdot a \cdot (\sum_{k \in \mathcal{K}} k\cdot  \log((kp+1)a)) \right) \text{ (mod }p^{\nu}\text{)} \text{ .}$$
Donc :
$$S_1 \equiv \left((\sum_{k \in \mathcal{K}} k) \cdot \sum_{a=1}^{p-1}  \chi^{-1}(a)\cdot a \cdot  \log(a) \right) + \left( \sum_{a=1}^{p-1} \chi^{-1}(a)\cdot a \cdot (\sum_{k \in \mathcal{K}}  k \cdot \log(kp+1))\right) \text{ (mod }p^{\nu}\text{)} \text{ .}$$
On a 
$$\sum_{k \in \mathcal{K}} k = (N-1)\cdot \frac{N-1}{p} - \frac{N(N-1)}{2} \equiv 0  \text{ (mod }p^{\nu}\text{)} \text{ .}$$
On a 
$$  \sum_{a=1}^{p-1} \chi^{-1}(a)\cdot a \cdot (\sum_{k \in \mathcal{K}}  k \cdot \log(kp+1))  \equiv B_{1, \chi^{-1}}\cdot \sum_{k \in \mathcal{K}} k\cdot p \cdot \log(kp+1) \equiv B_{1, \chi^{-1}}\cdot \sum_{x \in X} x\cdot \log(x) \text{ (mod }p^{\nu}\text{)} $$
(la dernière congruence vient du fait que $\sum_{x \in X} \log (x) \equiv 0  \text{ (mod }p^{\nu}\text{)}$).
On a : 
$$\sum_{x \in X} x\cdot \log(x) \equiv \sum_{k=1}^{N-1} (N-kp)\cdot \log(kp) \equiv 0 \text{ (mod }p^{\nu}\text{)}$$
car $\sum_{k=1}^{N-1} k\cdot \log(k) \equiv  \sum_{k=1}^{\frac{N-1}{2}} \log(k) \equiv \log((\frac{N-1}{2}) !) \equiv 0\text{ (mod }p^{\nu}\text{)}$ (on regroupe les termes en $k$ et $N-k$ pour la première congruence).

On obtient finalement : 
$$S_1 \equiv 0 \text{ (mod }p^{\nu}\text{)}\text{ .}$$
La preuve de la proposition~\ref{Gamma} est donc achevée.
\end{proof}

\section{Démonstration par la formule de Gross--Koblitz}

Achevons cet article en montrant les théorèmes~\ref{identite_u},~\ref{identite_garett} et~\ref{P}.

Rappelons qu'on a fixé un idéal premier $\mathfrak{p}$ au-dessus de $(N)$ dans $\mathbf{Q}(\zeta_p)$. Soit $\mathfrak{P}$ l'idéal premier au-dessus de $\mathfrak{p}$ dans $F:=\mathbf{Q}(\zeta_p, \zeta_N)$. On a un plongement canonique $F \hookrightarrow F_{\mathfrak{P}}$ où $F_{\mathfrak{P}}$ est le complété $\mathfrak{P}$-adique de $F$. Rappelons qu'on a défini une somme de Gauss $\mathcal{G}$.

\begin{thm}(Formule de Gross-Koblitz, \cite[Theorem $1.7$]{Gross_Koblitz})
Soit $\pi \in F_{\mathfrak{P}}$ l'unique uniformisante vérifiant :
\begin{itemize}
\item[$\bullet$]$\pi^{N-1}=-N$,
\item[$\bullet$]$\pi \equiv \zeta_N-1 \text{ (mod }(\zeta_N-1)^2\text{)}$.
\end{itemize}

Alors dans $K_{\mathfrak{P}}$ on a l'égalité suivante pour tout $a\in \{1,...,p-1\}$ :
$$\sigma_a(\mathcal{G}) = \pi^{a\cdot \frac{N-1}{p}} \cdot \Gamma_N(\frac{a}{p}) \text{ .}$$
\end{thm}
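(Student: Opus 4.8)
La démonstration consiste à invoquer la formule de Gross--Koblitz générale \cite[Theorem $1.7$]{Gross_Koblitz} après avoir vérifié que les normalisations (choix de l'uniformisante $\pi$, du caractère additif $\zeta_N^{a}$ et du caractère multiplicatif $\chi_N^{-\frac{N-1}{p}}$) coïncident avec les siennes. Si l'on souhaite en donner une preuve autonome, le plus naturel est de reprendre la méthode analytique de Dwork (\textit{cf.} \cite{Hayes}), dont voici les grandes lignes.

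Premièrement, je comparerais les valuations $\mathfrak{P}$-adiques des deux membres. Comme $\Gamma_N$ est à valeurs dans $\mathbf{Z}_N^{\times}$, la valeur $\Gamma_N(\frac{a}{p})$ est une unité ; par ailleurs $\pi$ est une uniformisante, donc $v_{\mathfrak{P}}(\pi^{a\cdot \frac{N-1}{p}}) = a\cdot \frac{N-1}{p}$. Du côté gauche, le théorème de Stickelberger donne exactement $v_{\mathfrak{P}}(\sigma_a(\mathcal{G})) = a\cdot \frac{N-1}{p}$ (le caractère $\sigma_a(\chi_N^{-\frac{N-1}{p}}) = \chi_N^{-a\cdot \frac{N-1}{p}}$ est d'ordre divisant $p$, et $1\leq a \leq p-1$ place l'exposant dans $(0,N-1)$). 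Ainsi le quotient $\sigma_a(\mathcal{G}) \cdot \pi^{-a\cdot \frac{N-1}{p}} \cdot \Gamma_N(\frac{a}{p})^{-1}$ est une unité de $F_{\mathfrak{P}}$. Deuxièmement, j'en calculerais la réduction modulo $\mathfrak{P}$ : la congruence de Stickelberger pour $\sigma_a(\mathcal{G})$ et la congruence explicite pour $\Gamma_N(1-\frac{a}{p})$ modulo $N$ établie dans la preuve de la proposition~\ref{Gamma} montrent que ce quotient est $\equiv 1 \text{ (mod }\mathfrak{P}\text{)}$.

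Le point délicat --- et le cœur de la formule de Gross--Koblitz --- est de passer de cette congruence modulo $\mathfrak{P}$ à une \emph{égalité} exacte dans $F_{\mathfrak{P}}$, car une unité de $\mathbf{Z}_N^{\times}$ n'est pas déterminée par sa réduction. Pour cela j'introduirais la fonction de découpage de Dwork, c'est-à-dire l'exponentielle d'Artin--Hasse tordue $E_{\pi}(X) = \exp\left( \pi \cdot (X - X^N) \right)$, normalisée de sorte que $E_{\pi}(1) = \zeta_N$ grâce au choix précis de $\pi$. Cette série surconvergente réalise le caractère additif $x \mapsto \zeta_N^{x}$ sur les représentants de Teichmüller, ce qui permet d'exprimer la somme de Gauss $\sigma_a(\mathcal{G})$ comme la spécialisation d'un même objet analytique $N$-adique que le produit $\pi^{a\cdot \frac{N-1}{p}} \cdot \Gamma_N(\frac{a}{p})$ ; la comparaison des deux écritures, en s'appuyant sur les équations fonctionnelles de $\Gamma_N$ rappelées plus haut, fournit l'identité voulue. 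L'obstacle principal est précisément le contrôle de la surconvergence de $E_{\pi}$ au sens de Dwork, qui est ce qui rigidifie le calcul et force l'égalité, là où les arguments de valuation et de réduction ne donnaient qu'une information modulo $\mathfrak{P}$.
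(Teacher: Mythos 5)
L'article ne démontre pas cet énoncé : il s'agit de la formule de Gross--Koblitz, importée telle quelle de \cite[Theorem $1.7$]{Gross_Koblitz}, et votre première phrase (invoquer le théorème après vérification des normalisations du caractère additif, du caractère multiplicatif et de l'uniformisante $\pi$) coïncide donc exactement avec ce que fait le texte. L'esquisse autonome que vous ajoutez ensuite --- valuation des deux membres via le théorème de Stickelberger, congruence modulo $\mathfrak{P}$, puis rigidification par la fonction de découpage de Dwork $E_{\pi}(X)=\exp(\pi(X-X^N))$ --- suit fidèlement la stratégie classique de Dwork--Boyarsky et identifie correctement le point dur (une unité de $\mathbf{Z}_N^{\times}$ n'est pas déterminée par sa réduction). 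Elle reste toutefois une esquisse et non une preuve : l'étape décisive, à savoir déduire l'égalité exacte de la surconvergence de $E_{\pi}$ et des équations fonctionnelles de $\Gamma_N$, est annoncée sans être effectuée ; de plus, la congruence pour $\Gamma_N(1-\frac{a}{p})$ que vous empruntez à la preuve de la proposition~\ref{Gamma} n'y est établie qu'au signe près, ce qui ne suffit pas, en l'état, à fixer la réduction du quotient modulo $\mathfrak{P}$. Rien de tout cela n'est un défaut par rapport à l'article, qui se contente de la citation, mais il faut être conscient que votre paragraphe final ne remplace pas la référence.
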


Par la proposition~\ref{unit_gauss} et la formule de Gross--Koblitz ci-dessus, on a $$u_{\omega^{-i}}^{B_{1, \omega^i}} = \pi^{(N-1)\cdot B_{1,\omega^i}}\cdot \prod_{a=1}^{p-1} \Gamma_N(\frac{a}{p})^{\omega^i(a)} = (-N)^{B_{1,\omega^i}}\cdot \prod_{a=1}^{p-1} \Gamma_N(\frac{a}{p})^{\omega^i(a)} \in \mathbf{Q}_N^{\times} \otimes_{\mathbf{Z}} \mathbf{Z}_p \text{ .}$$
Cette égalité, combinée avec la proposition~\ref{Gamma}, prouve le théorème~\ref{identite_u}.

Si $\mathfrak{p}$ est principal, engendré par $\epsilon \in \mathbf{Z}[\zeta_p]$, on a par construction :

$$u_{\omega^{-i}}^{B_{1, \omega^i}} = \epsilon^{B_{1, \omega^i}} \cdot \prod_{a \in \mathbf{F}_p^{\times}\text{, } a\not\equiv 1} \log(\sigma_a(\epsilon))^{\omega^i(a)\cdot B_{1,\omega^i}}$$
et
$$N = \prod_{a \in \mathbf{F}_p^{\times}} \sigma_a(\epsilon)$$
donc
$$\prod_{a \in \mathbf{F}_p^{\times}\text{, } a\not\equiv 1} \sigma_a(\epsilon)^{(\omega^i(a)-1)\cdot B_{1, \omega^i}} = (-1)^{B_{1,\omega^i}}\cdot \prod_{a=1}^{p-1}\Gamma_N(\frac{a}{p})^{\omega^i(a)} \text{ .}$$
D'après la proposition~\ref{Gamma}, on obtient la formule annoncée dans le théorème~\ref{P}.

Pour conclure cet article, il reste à montrer le théorème~\ref{identite_garett}.

On doit montrer que $\Lambda(\frac{\beta}{N})=0 \text{ (mod }p\text{)}$ où $\beta \in \mathbf{Q}(\zeta_p)^{\times}$ est tel que $K_0 = \mathbf{Q}(\zeta_p)(\beta^{\frac{1}{p}})$ (on voit $\beta$ comme un élément de $\mathbf{Q}_N^{\times}$ via la complétion de $\mathbf{Q}(\zeta_p)$ en $\mathfrak{p}$).  Par la théorie de Galois, on a 
$$\mathcal{G}^p \in \mathbf{Z}[\zeta_p] \text{ .}$$
Comme $\mathcal{G} \in \mathbf{Q}(\zeta_p, \zeta_N)$, on en déduit qu'on peut choisir $\beta = \mathcal{G}^p$.
Les congruences de Kummer (\textit{cf.} \cite[p.5]{garett}) nous donnent :
$$\frac{\mathcal{G}}{(\zeta_N-1)^{\frac{N-1}{p}}} \equiv \frac{-1}{(\frac{N-1}{p})!}  \text{ (mod }\mathfrak{P}\text{)} \text{ .}$$
On a :
$$N=(\zeta_N-1)\cdot(\zeta_N^2-1)\cdot ... \cdot (\zeta_N^{N-1}-1)$$
et comme $\zeta_N \equiv 1 \text{ (mod }\mathfrak{P}\text{)}$, on obtient 
$$\frac{N}{(\zeta_N-1)^{N-1}} \equiv (N-1)! \equiv -1 \text{ (mod }\mathfrak{P}\text{)} \text{ .}$$
Donc :
$$\frac{\beta}{N}=\frac{\mathcal{G}^p}{N} \equiv -(\frac{-1}{(\frac{N-1}{p})!})^p \text{ (mod }\mathfrak{p}\text{)} \text{ .}$$
Cela montre que $\frac{\beta}{N}$ est une puissance $p$-ième modulo $\mathfrak{p}$, donc que $\Lambda(\frac{\beta}{N})=1$.

Cela achève la preuve du théorème~\ref{identite_garett}.

\bibliography{ClassGroups}

\begin{thebibliography}{Was97}

\bibitem[CE05]{CE}
Frank Calegari and Matthew Emerton.
\newblock On the ramification of {H}ecke algebras at {E}isenstein primes.
\newblock {\em Invent. Math.}, 160(1):97--144, 2005.

\bibitem[Gar10]{garett}
Paul Garett.
\newblock Kummer, {E}isenstein, computing {G}auss sums as {L}agrange
  resolvents.
\newblock pages 1--16, 2010.

\bibitem[Ger05]{Gerth}
Frank Gerth, III.
\newblock On 3-class groups of certain pure cubic fields.
\newblock {\em Bull. Austral. Math. Soc.}, 72(3):471--476, 2005.

\bibitem[GK79]{Gross_Koblitz}
Benedict~H. Gross and Neal Koblitz.
\newblock Gauss sums and the {$p$}-adic {$\Gamma $}-function.
\newblock {\em Ann. of Math. (2)}, 109(3):569--581, 1979.

\bibitem[Gro88]{Gross_refined}
Benedict~H. Gross.
\newblock On the values of abelian {$L$}-functions at {$s=0$}.
\newblock {\em J. Fac. Sci. Univ. Tokyo Sect. IA Math.}, 35(1):177--197, 1988.

\bibitem[Hay88]{Hayes}
David~R. Hayes.
\newblock The refined {${\mathfrak p}$}-adic abelian {S}tark conjecture in
  function fields.
\newblock {\em Invent. Math.}, 94(3):505--527, 1988.

\bibitem[HK72]{Halter_Koch}
Franz Halter-Koch.
\newblock Ein {S}atz \"uber die {G}eschlechter relativ-zyklischer
  {Z}ahlk\"orper von {P}rimzahlgrad und seine {A}nwendung auf
  biquadratisch-bizyklische {K}\"orper.
\newblock {\em J. Number Theory}, 4:144--156, 1972.

\bibitem[Iim86]{Iimura}
Kiyoaki Iimura.
\newblock On the {$l$}-rank of ideal class groups of certain number fields.
\newblock {\em Acta Arith.}, 47(2):153--166, 1986.

\bibitem[Jau81]{Jaulent}
Jean-Fran{\c{c}}ois Jaulent.
\newblock Unit\'es et classes dans les extensions m\'etab\'eliennes de degr\'e
  {$nl^{s}$} sur un corps de nombres alg\'ebriques.
\newblock {\em Ann. Inst. Fourier (Grenoble)}, 31(1):ix--x, 39--62, 1981.

\bibitem[Maz77]{Mazur_Eisenstein}
B.~Mazur.
\newblock Modular curves and the {E}isenstein ideal.
\newblock {\em Inst. Hautes \'Etudes Sci. Publ. Math.}, (47):33--186 (1978),
  1977.

\bibitem[Mer96]{Merel}
Lo{\"{\i}}c Merel.
\newblock L'accouplement de {W}eil entre le sous-groupe de {S}himura et le
  sous-groupe cuspidal de {$J_0(p)$}.
\newblock {\em J. Reine Angew. Math.}, 477:71--115, 1996.

\bibitem[Sch90]{Schoof}
Ren{\'e} Schoof.
\newblock The structure of the minus class groups of abelian number fields.
\newblock In {\em S\'eminaire de {T}h\'eorie des {N}ombres, {P}aris
  1988--1989}, volume~91 of {\em Progr. Math.}, pages 185--204. Birkh\"auser
  Boston, Boston, MA, 1990.

\bibitem[Was97]{Washington}
Lawrence~C. Washington.
\newblock {\em Introduction to cyclotomic fields}, volume~83 of {\em Graduate
  Texts in Mathematics}.
\newblock Springer-Verlag, New York, second edition, 1997.

\end{thebibliography}
\bibliographystyle{alpha}
\newpage

\end{document}